\setlist[enumerate]{leftmargin=.5in}
\setlist[itemize]{leftmargin=.5in}
\newcommand{\e}{{\rm e}}
\newcommand{\OO}{{\rm O}}
\newcommand{\R}{{\mathbb R}}
\newcommand{\EE}{{\mathbb E}}
\newcommand{\tr}{\operatorname{trace}}
\newcommand{\Idiagram}{\xymatrix@=2.9ex{
*{\bullet} \ar@[red]@<-.2ex>@{-}[d]\ar@{-}@[green][d] \ar@{-}@[blue]@<.2ex>[d] \\
*{\bullet}}}
\newcommand{\IIdiagram}{\xymatrix@=2.9ex{
*{\bullet} \ar@[red]@<-.2ex>@{-}[d]\ar@{-}@[green][d] \ar@{-}@[blue]@<.2ex>[d]  & *{\bullet}\\
*{\bullet} & *{\bullet} \ar@[red]@<-.2ex>@{-}[u]\ar@{-}@[green][u] \ar@{-}@[blue]@<.2ex>[u]}} 
\newcommand{\OdiagramR}{\xymatrix@=2.9ex{
*{\bullet} \ar@[red]@{-}[r]\ar@{-}@<-.1ex>@[green][d] \ar@{-}@[blue]@<.1ex>[d]  & *{\bullet}\\
*{\bullet} & *{\bullet} \ar@[red]@{-}[l]\ar@{-}@<-.1ex>@[green][u] \ar@{-}@[blue]@<.1ex>[u]  }}
\newcommand{\OdiagramG}{\xymatrix@=2.9ex{
*{\bullet} \ar@[green]@{-}[r]\ar@{-}@<-.1ex>@[red][d] \ar@{-}@[blue]@<.1ex>[d]  & *{\bullet}\\
*{\bullet} & *{\bullet} \ar@[green]@{-}[l]\ar@{-}@<-.1ex>@[red][u] \ar@{-}@[blue]@<.1ex>[u]  }}
\newcommand{\OdiagramB}{\xymatrix@=2.9ex{
*{\bullet} \ar@[blue]@{-}[r]\ar@{-}@<-.1ex>@[red][d] \ar@{-}@[green]@<.1ex>[d]  & *{\bullet}\\
*{\bullet} & *{\bullet} \ar@[blue]@{-}[l]\ar@{-}@<-.1ex>@[red][u] \ar@{-}@[green]@<.1ex>[u]  }}
\newcommand{\Xdiagram}{\xymatrix@=2.9ex{
*{\bullet} \ar@[red]@{-}[r]\ar@{-}@[green][rd]  \ar@{-}@[blue][d]  & *{\bullet}\ar@{-}@[green][ld]   \\
*{\bullet} & *{\bullet} \ar@[red]@{-}[l] \ar@{-}@[blue][u] }}
\newcommand{\IIblack}{\vcenter{\xymatrix@=2.9ex{
*{\bullet}\ar@{-}[d] & *{\bullet}\ar@{-}[d]\\
*{\bullet} & *{\bullet}}}}
\newcommand{\Xblack}{\vcenter{\xymatrix@=2.9ex{
*{\bullet}\ar@{-}[rd] & *{\bullet}\ar@{-}[ld]\\
*{\bullet} & *{\bullet}}}}
\newcommand{\Zblack}{\vcenter{\xymatrix@=2.9ex{
*{\bullet}\ar@{-}[r] & *{\bullet} \\
*{\bullet}\ar@{-}[r]  & *{\bullet}}}}
\newcommand{\IIred}{\vcenter{\xymatrix@=2.9ex{
*{\bullet}\ar@[red]@{-}[d] & *{\bullet}\ar@[red]@{-}[d]\\
*{\bullet} & *{\bullet}}}}
\newcommand{\Xred}{\vcenter{\xymatrix@=2.9ex{
*{\bullet}\ar@[red]@{-}[rd] & *{\bullet}\ar@[red]@{-}[ld]\\
*{\bullet} & *{\bullet}}}}
\newcommand{\Zred}{\vcenter{\xymatrix@=2.9ex{
*{\bullet}\ar@[red]@{-}[r] & *{\bullet} \\
*{\bullet}\ar@[red]@{-}[r]  & *{\bullet}}}}
\newcommand{\IIgreen}{\vcenter{\xymatrix@=2.9ex{
*{\bullet}\ar@[green]@{-}[d] & *{\bullet}\ar@[green]@{-}[d]\\
*{\bullet} & *{\bullet}}}}
\newcommand{\Xgreen}{\vcenter{\xymatrix@=2.9ex{
*{\bullet}\ar@[green]@{-}[rd] & *{\bullet}\ar@[green]@{-}[ld]\\
*{\bullet} & *{\bullet}}}}
\newcommand{\Zgreen}{\vcenter{\xymatrix@=2.9ex{
*{\bullet}\ar@[green]@{-}[r] & *{\bullet} \\
*{\bullet}\ar@[green]@{-}[r]  & *{\bullet}}}}
\newcommand{\IIblue}{\vcenter{\xymatrix@=2.9ex{
*{\bullet}\ar@[blue]@{-}[d] & *{\bullet}\ar@[blue]@{-}[d]\\
*{\bullet} & *{\bullet}}}}
\newcommand{\Xblue}{\vcenter{\xymatrix@=2.9ex{
*{\bullet}\ar@[blue]@{-}[rd] & *{\bullet}\ar@[blue]@{-}[ld]\\
*{\bullet} & *{\bullet}}}}
\newcommand{\Zblue}{\vcenter{\xymatrix@=2.9ex{
*{\bullet}\ar@[blue]@{-}[r] & *{\bullet} \\
*{\bullet}\ar@[blue]@{-}[r]  & *{\bullet}}}}
\newcommand{\RedMouse}{\vcenter{\xymatrix@C=.3cm@R=.2cm{
1&& 1 \\
&*{\bullet}\ar@[red]@{-}[dd] \ar@[green]@{-}[lu] \ar@[blue]@{-}[ru]   &\\
&& \\
&*{\bullet}\ar@[green]@{-}[ld]  \ar@[blue]@{-}[rd]  &\\
2& & 2}}}
\newcommand{\GreenMouse}{\vcenter{\xymatrix@C=.3cm@R=.2cm{
1&& 1 \\
&*{\bullet}\ar@[green]@{-}[dd] \ar@[red]@{-}[lu] \ar@[blue]@{-}[ru]   &\\
 && \\
&*{\bullet}\ar@[red]@{-}[ld]  \ar@[blue]@{-}[rd]  &\\
2& & 2}}}
\newcommand{\BlueMouse}{\vcenter{\xymatrix@C=.3cm@R=.2cm{
1&& 1 \\
&*{\bullet}\ar@[blue]@{-}[dd] \ar@[red]@{-}[lu] \ar@[green]@{-}[ru]   &\\
& &\\
&*{\bullet}\ar@[red]@{-}[ld]  \ar@[green]@{-}[rd]  &\\
2& & 2}}}
\newcommand{\RedWorm}{\vcenter{\xymatrix@R=.5cm{
1\\
*{\bullet}\ar@[red]@{-}[u] \ar@[green]@<.15ex>@{-}[d]  \ar@[blue]@<-.15ex>@{-}[d]\\
*{\bullet}\ar@[red]@{-}[d]\\
2
}}}
\newcommand{\GreenWorm}{\vcenter{\xymatrix@R=.5cm{
1\\
*{\bullet}\ar@[green]@{-}[u] \ar@[red]@<.15ex>@{-}[d]  \ar@[blue]@<-.15ex>@{-}[d]\\
*{\bullet}\ar@[green]@{-}[d]\\
2
}}}
\newcommand{\BlueWorm}{\vcenter{\xymatrix@R=.5cm{
1\\
*{\bullet}\ar@[blue]@{-}[u] \ar@[red]@<.15ex>@{-}[d]  \ar@[green]@<-.15ex>@{-}[d]\\
*{\bullet}\ar@[blue]@{-}[d]\\
2
}}}
\newcommand{\Plankton}{\vcenter{\xymatrix@=.6cm{
*{\bullet}\ar@[red]@<.3ex>@{-}[d] \ar@[green]@{-}[d]  \ar@[blue]@<-.3ex>@{-}[d]\\
*{\bullet}}}}
\newcommand{\RedSnail}{\vcenter{\xymatrix@C=.2cm@R=.13cm{
1&  & 1  \\
& *{\bullet}  \ar@[green]@{-}[lu] \ar@[blue]@{-}[ru]  &  \\
&&\\
&*{\bullet}\ar@[red]@{-}[uu] \ar@[green]@<.15ex>@{-}[dd]  \ar@[blue]@<-.15ex>@{-}[dd] & \\
& &\\
&*{\bullet}\ar@[red]@{-}[dd] &\\
&&\\
&1 &
}}}
\newcommand{\GreenSnail}{\vcenter{\xymatrix@C=.2cm@R=.13cm{
1&  & 1  \\
& *{\bullet}  \ar@[red]@{-}[lu] \ar@[blue]@{-}[ru]  &  \\
&&\\
&*{\bullet}\ar@[green]@{-}[uu] \ar@[red]@<.15ex>@{-}[dd]  \ar@[blue]@<-.15ex>@{-}[dd] & \\
&&\\
&*{\bullet}\ar@[green]@{-}[dd] &\\
&&\\
&1 &
}}}
\newcommand{\BlueSnail}{\vcenter{\xymatrix@C=.2cm@R=.13cm{
1&  & 1  \\
& *{\bullet}  \ar@[red]@{-}[lu] \ar@[green]@{-}[ru]  &  \\
&&\\
&*{\bullet}\ar@[blue]@{-}[uu] \ar@[red]@<.15ex>@{-}[dd]  \ar@[green]@<-.15ex>@{-}[dd] & \\
&&\\
&*{\bullet}\ar@[blue]@{-}[dd] &\\
&&\\
&1 &
}}}
\newcommand{\Triangle}{\vcenter{\xymatrix@C=.17cm@R=.3cm{
&& 1 & &\\
&&  *{\bullet}  \ar@[red]@{-}[u]  \ar@[blue]@{-}[ldd]  \ar@[green]@{-}[rdd]&&\\
&&& &\\
&*{\bullet}   \ar@[green]@{-}[ld]  \ar@[red]@{-}[rr] & & *{\bullet}  \ar@[blue]@{-}[rd] &\\
1 & & && 1
}}}
\newcommand{\RedWeasel}{\vcenter{\xymatrix@C=.2cm@R=.13cm{
1&  & 1  \\
& *{\bullet}  \ar@[green]@{-}[lu] \ar@[blue]@{-}[ru]  &  \\
&&\\
&*{\bullet}\ar@[red]@{-}[uu] \ar@[green]@<.15ex>@{-}[dd]  \ar@[blue]@<-.15ex>@{-}[dd] & \\
& &\\
&*{\bullet}\ar@[red]@{-}[dd] &\\
&&\\
& *{\bullet}  \ar@[green]@{-}[ld] \ar@[blue]@{-}[rd]  &  \\
2 & & 2
}}}
\newcommand{\GreenWeasel}{\vcenter{\xymatrix@C=.2cm@R=.13cm{
1&  & 1  \\
& *{\bullet}  \ar@[red]@{-}[lu] \ar@[blue]@{-}[ru]  &  \\
&&\\
&*{\bullet}\ar@[green]@{-}[uu] \ar@[red]@<.15ex>@{-}[dd]  \ar@[blue]@<-.15ex>@{-}[dd] & \\
&&\\
&*{\bullet}\ar@[green]@{-}[dd] &\\
&&\\
& *{\bullet}  \ar@[red]@{-}[ld] \ar@[blue]@{-}[rd]  &  \\
2 & & 2
}}}
\newcommand{\BlueWeasel}{\vcenter{\xymatrix@C=.2cm@R=.13cm{
1&  & 1  \\
& *{\bullet}  \ar@[red]@{-}[lu] \ar@[green]@{-}[ru]  &  \\
&&\\
&*{\bullet}\ar@[blue]@{-}[uu] \ar@[red]@<.15ex>@{-}[dd]  \ar@[green]@<-.15ex>@{-}[dd] & \\
&&\\
&*{\bullet}\ar@[blue]@{-}[dd] &\\
&&\\
& *{\bullet}  \ar@[red]@{-}[ld] \ar@[green]@{-}[rd]  &  \\
2 & & 2
}}}
\newcommand{\RedSquare}{\vcenter{\xymatrix@C=.3cm@R=.3cm{
1& & & & 2 \\
&*{\bullet}\ar@[blue]@{-}[dd] \ar@[red]@{-}[lu] \ar@[green]@{-}[rr] & &*{\bullet} \ar@[red]@{-}[ru]   &\\
& & & & \\
&*{\bullet}\ar@[red]@{-}[ld] & &  *{\bullet} \ar@[red]@{-}[rd] \ar@[blue]@{-}[uu]  \ar@[green]@{-}[ll]  &\\
3& & & & 4}}}
\newcommand{\GreenSquare}{\vcenter{\xymatrix@C=.3cm@R=.3cm{
1& & & & 2 \\
&*{\bullet}\ar@[blue]@{-}[dd] \ar@[green]@{-}[lu] \ar@[red]@{-}[rr] & &*{\bullet} \ar@[green]@{-}[ru]   &\\
& & & & \\
&*{\bullet}\ar@[green]@{-}[ld] & &  *{\bullet} \ar@[green]@{-}[rd] \ar@[blue]@{-}[uu]  \ar@[red]@{-}[ll]  &\\
3& & & & 4}}}
\newcommand{\BlueSquare}{\vcenter{\xymatrix@C=.3cm@R=.3cm{
1& & & & 2 \\
&*{\bullet}\ar@[green]@{-}[dd] \ar@[blue]@{-}[lu] \ar@[red]@{-}[rr] & &*{\bullet} \ar@[blue]@{-}[ru]   &\\
& & & & \\
&*{\bullet}\ar@[blue]@{-}[ld] & &  *{\bullet} \ar@[blue]@{-}[rd] \ar@[green]@{-}[uu]  \ar@[red]@{-}[ll]  &\\
3& & & & 4}}}
\newcommand{\RedGreenSquare}{\vcenter{\xymatrix@C=.3cm@R=.3cm{
1& & & & 2 \\
&*{\bullet}\ar@[blue]@{-}[dd] \ar@[red]@{-}[lu] \ar@[green]@{-}[rr] & &*{\bullet} \ar@[red]@{-}[ru]   &\\
& & & & \\
&*{\bullet}\ar@[green]@{-}[ld] & &  *{\bullet} \ar@[green]@{-}[rd] \ar@[blue]@{-}[uu]  \ar@[red]@{-}[ll]  &\\
1& & & & 2}}}
\newcommand{\RedBlueSquare}{\vcenter{\xymatrix@C=.3cm@R=.3cm{
1& & & & 2 \\
&*{\bullet}\ar@[green]@{-}[dd] \ar@[red]@{-}[lu] \ar@[blue]@{-}[rr] & &*{\bullet} \ar@[red]@{-}[ru]   &\\
& & & & \\
&*{\bullet}\ar@[blue]@{-}[ld] & &  *{\bullet} \ar@[blue]@{-}[rd] \ar@[green]@{-}[uu]  \ar@[red]@{-}[ll]  &\\
1& & & & 2}}}
\newcommand{\GreenBlueSquare}{\vcenter{\xymatrix@C=.3cm@R=.3cm{
1& & & & 2 \\
&*{\bullet}\ar@[red]@{-}[dd] \ar@[green]@{-}[lu] \ar@[blue]@{-}[rr] & &*{\bullet} \ar@[green]@{-}[ru]   &\\
& & & & \\
&*{\bullet}\ar@[blue]@{-}[ld] & &  *{\bullet} \ar@[blue]@{-}[rd] \ar@[red]@{-}[uu]  \ar@[green]@{-}[ll]  &\\
1& & & & 2}}}
\newcommand{\Tetrahedron}{\vcenter{\xymatrix@=2.9ex{
*{\bullet} \ar@[red]@{-}[r] \ar@[green]@{-}[d]\ar@[blue]@{-}[rd] & *{\bullet}\ar@[blue]@{-}[ld]\\
*{\bullet} & *{\bullet}  \ar@[red]@{-}[l] \ar@[green]@{-}[u]}}}
\newcommand{\RedFrame}{\vcenter{\xymatrix@=2.9ex{
*{\bullet} \ar@[red]@{-}[r] \ar@[green]@<.15ex>@{-}[d]\ar@[blue]@<-.15ex>@{-}[d] & *{\bullet}\ar@[blue]@<.15ex>@{-}[d]\\
*{\bullet} & *{\bullet}  \ar@[red]@{-}[l] \ar@[green]@<.15ex>@{-}[u]}}}
\newcommand{\GreenFrame}{\vcenter{\xymatrix@=2.9ex{
*{\bullet} \ar@[green]@{-}[r] \ar@[red]@<.15ex>@{-}[d]\ar@[blue]@<-.15ex>@{-}[d] & *{\bullet}\ar@[blue]@<.15ex>@{-}[d]\\
*{\bullet} & *{\bullet}  \ar@[green]@{-}[l] \ar@[red]@<.15ex>@{-}[u]}}}
\newcommand{\BlueFrame}{\vcenter{\xymatrix@=2.9ex{
*{\bullet} \ar@[blue]@{-}[r] \ar@[red]@<.15ex>@{-}[d]\ar@[green]@<-.15ex>@{-}[d] & *{\bullet}\ar@[green]@<.15ex>@{-}[d]\\
*{\bullet} & *{\bullet}  \ar@[blue]@{-}[l] \ar@[red]@<.15ex>@{-}[u]}}}
\newcommand\blfootnote[1]{%
  \begingroup
  \renewcommand\thefootnote{}\footnote{#1}%
  \addtocounter{footnote}{-1}%
  \endgroup
}
\begin{document}

\title{Algebraic Methods for Tensor Data}

\author{Neriman Tokcan$^{1,2}$, Jonathan Gryak$^{3}$, Kayvan Najarian$^{3,4,5}$, and Harm Derksen$^{1,5}$}

\blfootnote{$^{1}$Department of Mathematics, University of Michigan, Ann Arbor}
\blfootnote{$^{2}$The Eli and Edythe L. Broad Institute of MIT and Harvard, Cambridge,  Massachusetts}
\blfootnote{$^{3}$Department of Computational Medicine and Bioinformatics, University of Michigan, Ann Arbor}
\blfootnote{$^{4}$Department of Emergency Medicine, University of Michigan, Ann Arbor}
\blfootnote{$^{5}$Michigan Center for Integrative Research in Critical Care, University of Michigan, Ann Arbor}

\maketitle

\begin{abstract}
We develop algebraic methods for computations with tensor data.
We give 3 applications: extracting features that are invariant under the orthogonal symmetries in each of the modes, approximation of the tensor spectral norm, and amplification of low rank tensor structure.
We introduce colored Brauer diagrams, which are used for algebraic computations and in analyzing their computational complexity. We present numerical experiments whose results show that the performance of the alternating least square algorithm for the low rank approximation of tensors can be improved using tensor amplification.

\end{abstract}
\begin{keywords}
tensors, Brauer diagrams, representation theory, invariant theory.
\end{keywords}
\begin{AMS}
15A72, 15A69, 62-07, 22E45, 20G05
\end{AMS}

\section{Introduction}\label{sec:1}
Data in applications often is structured in higher dimensional arrays.
Arrays of dimension $d$ are also called {\em $d$-way tensors}, or {\em tensors of order $d$}.
It is challenging to generalize methods for matrices, which are 2-dimensional arrays, 
to tensors of order $3$ or higher. 
The notion of rank can be generalized from matrices to higher order tensors (see~\cite{Hitchcock1,Hitchcock2}).
Also, the spectral and nuclear norms are not only defined for matrices, but also for tensors of order $\geq 3$
(\cite{Grothendieck, Schatten}). However, the
rank, spectral norm, and nuclear norm of a higher order tensor are difficult to compute.
In fact, the related decision problems are NP-complete.
This was proved for the tensor rank in \cite{Hastad,Hastad2}, for the spectral norm in \cite{HL}
and for the nuclear norm in \cite{FL}.

We will use algebraic methods from classical invariant theory to perform various computations with tensors and analyze the computational complexity. Our methods
are based on the description of tensor invariants of the orthogonal group by {\em Brauer diagrams} (\cite{Brauer,GW,Weyl}). Brauer diagrams are perfect matching graphs. We will discuss the background on Classical Invariant Theory and Brauer diagrams in Section~\ref{sec:2}.
We will restrict ourselves to 3-way tensors. The techniques generalize to tensors of order $\geq 4$, but some of the formulas become more complicated. To perform computations with $3$-way tensors, we generalize the notion of Brauer diagrams to colored trivalent graphs called {\em colored Brauer diagrams}, Section~\ref{sec3}.

In this paper we consider 3 applications of our algebraic approach, namely invariant tensor features from data,
approximations of the spectral and nuclear norm, and tensor amplification. In Subsection~\ref{sec4.1}, we introduce  the norm $|| \mathcal{T}||_{\sigma,m}^{m}$  for $m \in \mathbb{N}$ to approximate the spectral norm of 3-way tensors. In Subsection \ref{sec4.2}, we show that $|| \mathcal{T}||_{\sigma,2}$ is equal to the Euclidean norm (alternatively called Frobenius, Hilbert-Schmidth norm). In Subsection \ref{sec4.4}, we introduce another norm $||T||_{\#}$ that approximates the spectral norm.  The main results are explicit formulas of these norms in terms of colored Brauer diagrams (see Theorem~\ref{theo:approxd4} and Proposition~\ref{prop:IsNorm}) and a comparison between the spectral norm and these approximations (see~Proposition~\ref{prop:NormBetter}) is given.
In Section~\ref{sec5}, we study the low rank amplification methods based on the approximations of the spectral norm. We employ these amplification methods to obtain better initial guesses for the CP-ALS method; an algorithm for the low rank approximation to $3$-way tensors is given (Section~\ref{ALS}, Algorithm~5.1). In Section~\ref{experiment}, we compare the ALS tensor approximation based on tensor amplification initialization with random initialization. In our experiments, we see that methods introduced in Section \ref{ALS} give low rank $r$ approximations (r=1 in Subsection \ref{rank1test} and  r=2 in Subsection \ref{rank2test}) with better fits and improved time efficiency compared to CP-ALS method. 

\subsection{Notation and Preliminaries}\label{notation}

We will introduce the basic concepts and notation, which will lay the foundation for the rest of the paper. We will borrow most of our notation from \cite{FL} and \cite{KB}.

As we have stated before, tensors are multi-dimensional arrays. The \textit{order}  of a tensor is the number of its dimensions (ways, modes). Vectors are tensors of order 1 and matrices are tensors of order 2. We will refer to tensors of order 3 or higher as \text{higher-order tensors}.  Vectors are denoted by lower case letters $x  \in \mathbb{R}^{p}$, matrices are denoted by capital letters  $X \in \mathbb{R}^{p  \times q},$ and higher-order tensors are denoted by capital calligraphic letters $\mathcal{X} \in \R^{p_1 \times p_2 \times \ldots \times p_d}.$  The $(i_1,i_2,\ldots, i_d)-$th entry of the $d$-th order ($d$-way) tensor $\mathcal{X}$ is denoted by $x_{i_{1}i_{2}\ldots i_{d}}.$ 

The \textit{vector outer product} of $u \in \mathbb{R}^p~\text{and}~v \in \mathbb{R}^q$ is denoted by $u \otimes v$  and it can be given as the matrix multiplication $uv^{T} \in \mathbb{R}^{p \times q}.$

The \textit{inner product} of two same size tensors $\mathcal{X}, \mathcal{Y} \in \mathbb{R}^{p_{1}\times p_{2} \times \ldots \times  p_{d}}$ is defined as follows: 
\begin{equation}
\mathcal{X} \cdot \mathcal{Y} = \sum_{i_1=1}^{p_1} \sum_{i_2=1}^{p_2}\ldots \sum_{i_d=1}^{p_d} x_{i_1i_2\ldots i_d}y_{i_1i_2\ldots i_d}  \in \mathbb{R}.
\end{equation}
It follows immediately that the norm of a tensor is the square root of the sum of the squares of all its elements: 
\begin{equation}
     \|\mathcal {X}\|= \sqrt{\sum_ {i_1=1}^{p_1}\sum_{i_2=1}^{p_2}\ldots \sum_{i_d=1}^{p_d}x_{i_1i_2\ldots i_d}^2}.
\end{equation}

This is analogous to the matrix Frobenius norm, see Section \ref{spectral-nuclear} for more details on the Frobenius norm.  

A tensor $ \mathcal{S} \in \mathbb{R}^{p_1\times p_2 \times \ldots \times p_d}$ is \textit{rank one} if it can be written as an outer product of $n$ vectors, i.e., $\mathcal{S}=u_{1}\otimes u_{2}\otimes \ldots \otimes u_{d},~u_{i} \neq 0 \in \mathbb{R}^{p_i},~1\leq i\leq d.$ Such rank one tensors are also called \textit{simple} or \textit{pure}.

The best rank 1 approximation problem of a tensor $\mathcal{T} \in \mathbb{R}^{p_1\times p_2\times\ldots\times p_d}$ can be stated as follows:
\begin{equation}\label{rank1}
    \min_{\mathcal{S}} \| \mathcal {T} - \mathcal{S} \|~\text{where}~\mathcal{S}~\text{is a rank one tensor in}~\mathbb{R}^{p_1\times p_2\times\ldots\times p_d}.
\end{equation} 

The best rank 1 approximation problem is well-posed and NP-hard (\cite{Landsberg, SCA}). Different algebraic tools and algorithms  have been proposed to find the global minimum of Problem \ref{rank1} (see \cite{SCA, ZG}).

A tensor $\mathcal{S} \in \mathbb{R}^{p_1 \times p_2 \times \ldots \times p_d}$ can be represented as a linear combination of rank 1 tensors:

\begin{equation}\label{decomp}
    \mathcal{S}=\sum_{i=1}^{r} \lambda_i u_{1,i}\otimes u_{2,i}\otimes \ldots \otimes u_{d,i}
\end{equation}
where $r$ is sufficiently large, $\lambda_i \in \mathbb{R}$ and $u_{j,i} \in \mathbb{R}^{p_j}$ for $1 \leq i \leq r,~1\leq j\leq d.$ The smallest such integer $r$ is called the \textit{rank~(real rank)} of the tensor. The decomposition given in \eqref{decomp} is often referred to as the \textit{rank r decomposition, CP (Candecomp / Parafac), or Canonical Polyadic decomposition}. Let $U^{(j)}=[u_{j,1}u_{j,2}\ldots u_{j,r}] \in \mathbb{R}^{p_j \times r},~1\leq j \leq d.$ We call these matrices as \textit{factor matrices}. Then CP decomposition factorizes a $d$-way tensor into $d$ factor matrices and a vector $\Lambda=[\lambda_1, \lambda_2,\ldots, \lambda_r] \in \mathbb{R}^{r}.$ The decomposition in \eqref{decomp} can be concisely expressed as ${\mathcal S}=\llbracket \Lambda~;~U^{(1)},~U^{(2)},\ldots,~U^{(d)}\rrbracket.$ As in  \eqref{rank1}, the best rank $r$ approximation problem for a  tensor $\mathcal{T}  \in \mathbb{R}^{p_1 \times p_2 \times \ldots \times p_d}$ can be given as:\begin{equation}\label{rankr}
\min_{ \Lambda, U^{(1)}, \ldots, U^{(d)}} \|{\mathcal T - \mathcal{S}}\|~\text{where}~\mathcal{S}=\llbracket \Lambda~;~U^{(1)},~U^{(2)},\ldots,~U^{(d)} \rrbracket.
\end{equation}

The solution to Problem (\ref{rankr}) does not always exist (\cite{KB, SL}). Alternating Least Squares (ALS) is the most common method used for the low rank approximation, since it is simple and easy to implement. However, it has some limitations: convergence is slow for some cases, it is heavily dependent on the initial guess of the factor matrices, and it may not converge to a global minimum (see \cite{KB, Landsberg} for more details on the CP decomposition and ALS method). More details on the ALS algorithm for the low rank approximation are given in Section \ref{ALS}.

\subsection{Invariant tensor features from data}
Let $\OO_p(\R)$ be the group of orthogonal $p\times p$ matrices. The group $\OO_p(\R)\times \OO_q(\R)$ acts
on the space $\R^{p\times q}$ of $p\times q$ matrices by left and right multiplication. A group element $(B,C)\in \OO_p(\R)\times \OO_q(\R)$ acts on a matrix $A\in \R^{p\times q}$ by $(B,C)\cdot A=BAC^{-1}$.
The singular values $\lambda_1(A)\geq \lambda_2(A)
\geq \dots \geq  \lambda_r(A) \geq 0$ of a $p\times q$ matrix $A$ are the features that are invariant under
the actions of $\OO_p(\R)$ and $\OO_q(\R)$ on the rows and columns respectively. In other words, if $B$ and $C$ are orthogonal matrices, then  $\lambda_i(BAC^{-1})=\lambda_i(A)$ for $1 \leq i \leq r.$
The function $t_k$ given by  $t_k(A)=\tr((AA^{T})^k)=\lambda_1(A)^{2k}+\lambda_2(A)^{2k}+\cdots+\lambda_r(A)^{2k},~k\geq 0$
is also invariant under the actions of $\OO_p(\R)$ and $\OO_q(\R)$. The invariant functions $t_1(A),t_2(A),\dots$ are polynomials in the entries of the matrix $A$. It is known that the set $\{t_{k} : k \geq 0 \}$ generates the ring of polynomial invariants under the action of  $\OO_p(\R)\times \OO_q(\R)$ on $p\times q$ matrices (see for example \cite[\S12.4.3, type {\bf BDI}]{GW}, but here we do not need any Pfaffians because we consider orthogonal groups and not special orthogonal groups).
We will consider invariant features for $3$-way tensors. Using classical invariant theory for the orthogonal group, we will describe polynomial tensor invariants in terms of colored Brauer diagrams. A similar approach to describing tensor invariants of orthogonal group actions can be found in the thesis \cite[\S4.2]{Williams}. We will introduce the colored Brauer diagrams in Section~\ref{sec3} and use them to construct polynomial tensor invariants.

\subsection{Approximations of the spectral and nuclear norm}\label{spectral-nuclear}
Important norms on the space of $p\times q$ matrices are the Frobenius norm (or the Euclidean $\ell_2$-norm), the spectral norm (or operator norm), and the nuclear norm.
These norms can be expressed in terms of the singular values of a matrix.
If $\lambda_1\geq \lambda_2\geq \dots\geq\lambda_r\geq 0$ are the singular values of a matrix $A$,
then the Frobenius norm is $\|A\|=\|A\|_F=\sqrt{\lambda_1^2+\lambda_2^2+\cdots+\lambda_r^2}$, the
spectral norm is $\|A\|_\sigma=\lambda_1$, and the nuclear norm is $\|A\|_\star=\lambda_1+\lambda_2+\cdots+\lambda_r$. The nuclear norm can be seen as a convex relaxation of the rank of a matrix. It is used for example in some algorithms for the  matrix completion problem which asks to complete a partially filled matrix such that the rank of the resulting matrix has minimal rank (\cite{EB,Convexrelax}). This problem has applications to collaborative filtering (see~\cite{Collaborative}). The spectral and nuclear norms generalize to higher order tensors. 

Let ${\mathcal T}$ be a tensor of order $d$ in $\mathbb{R}^{p_1 \times p_2 \times \ldots\times p_d}.$ We define its \textit{spectral norm} by
\begin{equation}
  \textstyle  \| {\mathcal T}\|_{\sigma}=  \sup \left \{ | {\mathcal T}\cdot u_{1} \otimes u_{2} \otimes \ldots \otimes u_{d} |~:~u_{j} \in \mathbb{R}^{p_j}, \|u_{j}\|=1,~1\leq j\leq d \right \}.
\end{equation}
It is known that the dual of the spectral norm is the \textit{nuclear norm} and it can be defined as 

\begin{multline}
  \textstyle \| \mathcal{T} \|_{\star} =\inf \left\{ \sum_{i=1}^r|\lambda_i|~:~\mathcal{T}=\sum_{i=1}^{r} \lambda_i u_{1,i}\otimes u_{2,i}\otimes\ldots\otimes u_{d,i},~\text{where}~\lambda_i \in \mathbb{R},\right.\\
 \textstyle u_{j,i} \in \left. \mathbb{R}^{p_j},~
 \|u_{j,i}\|=1,~1\leq j\leq d,~1\leq i\leq r 
\right\}.
\end{multline}

These generalizations are more difficult to compute,
as the corresponding decision problems are NP-hard (\cite{FL,HL}). As in the matrix case, nuclear norm of a tensor is considered as a convex relaxation of the tensor rank \cite{Derksen}. The nuclear and spectral norms of  tensors play an important role in tensor completion problems \cite{MZ}. Different methods to  estimate and to evaluate the  spectral norm and the nuclear norm and their upper and lower bounds have been studied by several authors (see \cite{Xu,Li, LNS, LS}). 

The spectral norm is related to rank 1 approximation of a given tensor. If ${\mathcal S}$ is a best rank 1 approximation of a given tensor $\mathcal{T},$ then $\|\mathcal{T}- \mathcal {S}\|=\sqrt{\|\mathcal{T}\|^2-\|\mathcal{T}\|_\sigma^2}$, (Proposition 1.1, \cite{LNS}).

We will give approximations of the spectral norm that can be computed in polynomial time using colored Brauer diagrams in Section~\ref{sec4}. For every even $d$ we define a norm $\|\cdot\|_{\sigma,d}$ that approximates the spectral norm $\|\cdot\|_{\sigma}$
such that $\|\cdot\|_{\sigma,d}^d$ is a polynomial function of degree $d$ and $\lim_{d\to\infty} \|{\mathcal T}\|_{\sigma, d}=\|{\mathcal T}\|_\sigma$ for any tensor ${\mathcal T}$. 
One of our main results is an explicit formula for the norm $\|\cdot\|_{\sigma,4}$ for tensors of order 3 in terms of colored Brauer diagrams (see Theorem~\ref{theo:approxd4}), which allows us to compute this norm efficiently. We also introduce another norm 
$\|\cdot\|_\#$ (see Definition~\ref{def:NewNorm}, Proposition~\ref{prop:IsNorm}) and show that it is, in some sense, a better approximation to the spectral norm than $\|\cdot\|_{\sigma,4}$ (see Proposition~\ref{prop:NormBetter}).

$\|. \|$ will stand for the Frobenius norm throughout the paper. 

\subsection{Tensor amplification}\label{tensor-amplification}
If $A$ is a real matrix with singular values $\lambda_1,\dots,\lambda_r$, then the matrix $AA^{T} A$
has singular values $\lambda_1^3,\lambda_2^3,\dots,\lambda_r^3$.
The map $A\mapsto AA^{T} A$ has the effect of amplifying the low rank structure corresponding to larger singular values, while suppressing the smaller singular values that
typically correspond to noise. Using colored Brauer diagrams, we will construct similar amplification maps for tensors of order 3 in Section~\ref{sec5}. We also will present numerical experiments whose results show that tensor amplification can reduce the running time of the alternating least square algorithm for the low rank tensor approximation, while producing a better approximation.


\section{Brauer diagrams}\label{sec:2}
In this section, we will give an overview of the classical invariant theory of the orthogonal group. We recall the relation between Brauer diagrams and invariant tensors for the orthogonal group.
\subsection{Orthogonal transformations on tensors}

Let $V\cong \R^n$ be a Euclidean vector space with basis $\e_1,\e_2,\dots,\e_n$. The orthogonal group  $\OO(V)=\OO_n(\R)=\{A\in \R^{n\times n}\mid AA^{T}=I\}$ acts on $V$. On $V$ we have an inner product that allows us to identify $V$ with its dual space $V^\star$. We consider the $d$-fold tensor product of $V$:
\begin{equation}
V^{\otimes d}=\underbrace{V\otimes V\otimes\cdots\otimes V}_d\cong\R^{n\times n\times \cdots\times n}\cong \R^{n^d}.
\end{equation}
There are various ways to think of elements of $V^{\otimes d}$. The following statement is well known (Chapter 2, \cite{Landsberg}).

\begin{lemma}\label{lem:TensorBijections}
There are bijections between the following sets:
\begin{enumerate}
    \item the set of {\bf tensors} $V^{\otimes d}$;
    \item $(V^{\otimes d})^\star$, the set of {\bf linear maps} $V^{\otimes d}\to \R$;
    \item the set of {\bf $\R$-multilinear maps} $V^d\to \R$.
    \end{enumerate}
\end{lemma}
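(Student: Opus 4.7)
The plan is to establish the two bijections (1)$\leftrightarrow$(2) and (2)$\leftrightarrow$(3) by writing down explicit mutually inverse maps, using the inner product on $V$ for the first correspondence and the universal property of the tensor product for the second. Since this is standard material, the task is really to assemble pieces rather than to overcome any conceptual obstacle.

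For (1)$\leftrightarrow$(2), I would first extend the inner product on $V$ to an inner product on $V^{\otimes d}$ by declaring
\begin{equation}
\langle v_1\otimes\cdots\otimes v_d,\, w_1\otimes\cdots\otimes w_d\rangle = \prod_{j=1}^{d}\langle v_j,w_j\rangle
\end{equation}
on simple tensors and extending bilinearly. To see this is well defined, I would check that the basis $\{\e_{i_1}\otimes\cdots\otimes \e_{i_d}\}_{1\le i_j\le n}$ is orthonormal under this product, so it agrees with the standard Euclidean pairing under the identification $V^{\otimes d}\cong \R^{n^d}$. The map $\Phi:V^{\otimes d}\to(V^{\otimes d})^\star$ sending $T\mapsto\langle T,\,\cdot\,\rangle$ is linear, and injective by nondegeneracy of the inner product; since both spaces have dimension $n^d$, it is bijective.

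For (2)$\leftrightarrow$(3), I would invoke the universal property of the tensor product: every $\R$-multilinear map $\mu:V^d\to\R$ factors uniquely through the canonical multilinear map $V^d\to V^{\otimes d}$, yielding a unique linear map $\widetilde{\mu}:V^{\otimes d}\to\R$ characterized by $\widetilde{\mu}(v_1\otimes\cdots\otimes v_d)=\mu(v_1,\dots,v_d)$. Conversely, any linear map $L:V^{\otimes d}\to\R$ pulls back along the tensor product map to give the multilinear map $(v_1,\dots,v_d)\mapsto L(v_1\otimes\cdots\otimes v_d)$. These two assignments are mutually inverse, giving the bijection.

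The hard part is essentially nil — the only thing to be a touch careful about is verifying that the inner product extension is basis-independent, which I would handle by the orthonormal-basis check above. The composite bijection (3)$\to$(1) then has the concrete description $\mu\mapsto \sum_{i_1,\dots,i_d}\mu(\e_{i_1},\dots,\e_{i_d})\,\e_{i_1}\otimes\cdots\otimes\e_{i_d}$, which will be the convenient form for later use with Brauer diagrams.
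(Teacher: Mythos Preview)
Your proposal is correct and follows essentially the same route as the paper: the bijection (2)$\leftrightarrow$(3) via the universal property of the tensor product is identical, and for (1)$\leftrightarrow$(2) the paper phrases the identification as $V\cong V^\star$ (via the inner product) hence $V^{\otimes d}\cong (V^\star)^{\otimes d}\cong (V^{\otimes d})^\star$, which is just a repackaging of your explicit inner-product extension on $V^{\otimes d}$.
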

\begin{proof}
We have a multi-linear map $\iota:V^d\to V^{\otimes d}$ given by $\iota(v_{1},v_{2},\dots,v_{d})=v_{1}\otimes v_{2}\otimes \cdots \otimes v_{d},$ where $v_{i} \in V$ for $i=1,\ldots,d.$ 
Any linear map $L:V^{\otimes d}\to \R$ induces a multi-linear map $\ell=L\circ \iota:V^d\to \R$. Conversely, every multi-linear map $\ell:V^d\to \R$ factors 
as $\ell=L\circ \iota$ for a unique linear map $L:V^{\otimes d}\to \R$ by the universal property of the tensor product (see~\cite[Chapter XVI]{Lang}). This proves the bijection between (2) and (3).
 Since we have identified $V$ with its dual $V^\star$ we can also identify $V^{\otimes d}$ with $(V^\star)^{\otimes d}\cong (V^{\otimes d})^\star$, which gives the equivalence between (1) and (2).
\end{proof}
We will frequently switch between these different viewpoints in the lemma.

The group $\OO(V)$ and the symmetric group $\Sigma_d$ act on the $d$-fold tensor product space
as follows. Let $S$ be a rank $r$ tensor in $V^{\otimes d}$ such that ${\mathcal S}=\sum_{i=1}^rv_{1,i}\otimes v_{2,i}\otimes \cdots\otimes v_{d,i}\in V^{\otimes d},$ where  $v_{j,i} \in V$ for all $i=1,\ldots,r$ and $j=1,\ldots,d$. 
 If $A\in \OO(V),$ then we have
\begin{equation} 
\textstyle A\cdot {\mathcal S}=\sum_{i=1}^r Av_{1,i}\otimes Av_{2,i}\otimes \cdots\otimes Av_{d,i}.
\end{equation} 
If $\pi\in \Sigma_d$ is a permutation, then
\begin{equation} \label{eq:symmaction}
\textstyle \pi\cdot {\mathcal S}=\sum_{i=1}^r v_{\pi^{-1}(1),i}\otimes v_{\pi^{-1}(2),i}\otimes \cdots\otimes v_{\pi^{-1}(d),i}.
\end{equation} 
The actions of $\OO_n(\R)$ and $\Sigma_d$ on $V^{\otimes d}$ commute.

The subspace of $\OO(V)$-invariant tensors in $V^{\otimes d}$ is
\begin{equation}
(V^{\otimes d})^{\OO(V)}=\{{\mathcal T}\in V^{\otimes d} : A\cdot {\mathcal T}={\mathcal T}\mbox{ for all $A\in \OO(V)$}\}.
\end{equation}
A linear map $L:V^{\otimes d}\to \R$ is $\OO(V)$-invariant
if $L(A\cdot {\mathcal T})=L({\mathcal T})$ for all tensors ${\mathcal T}$ and all $A\in \OO(V)$.
A multi-linear  map $M:V^d\to \R$ is $\OO(V)$-invariant if
$M(Av_{1},\dots,Av_{d})=M(v_{1},\dots,v_{d})$ for all $v_{1},\dots,v_{d}\in V$ and all $A\in \OO(V)$.

\begin{corollary}\label{cor:InvariantTensors}
There are bijections between the following sets:
\begin{enumerate}
    \item $(V^{\otimes d})^{\OO(V)}$, the set of $\OO(V)$-invariant tensors in $V^{\otimes d}$;
    \item the set of $\OO(V)$-invariant linear maps $V^{\otimes d}\to \R$;
    \item the set of $\OO(V)$-invariant multilinear maps $V^d\to \R$.
\end{enumerate}
\end{corollary}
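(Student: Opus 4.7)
The plan is to show that each of the two bijections supplied by Lemma~\ref{lem:TensorBijections} restricts to a bijection between the corresponding $\OO(V)$-invariant subsets. Since the lemma already gives set-theoretic bijections, all that remains is to check that ``being invariant'' is preserved and reflected by these bijections.

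First I would handle the correspondence between (2) and (3). Given a linear $L:V^{\otimes d}\to\R$ with associated multilinear $\ell=L\circ\iota$, the identity $\iota(Av_1,\dots,Av_d)=A\cdot\iota(v_1,\dots,v_d)$ is built into the definition of the $\OO(V)$-action on simple tensors. So if $L$ is $\OO(V)$-invariant, then
\begin{equation*}
\ell(Av_1,\dots,Av_d)=L(A\cdot\iota(v_1,\dots,v_d))=L(\iota(v_1,\dots,v_d))=\ell(v_1,\dots,v_d),
\end{equation*}
showing $\ell$ is $\OO(V)$-invariant. Conversely, suppose $\ell$ is $\OO(V)$-invariant. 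The simple tensors $\iota(v_1,\dots,v_d)$ span $V^{\otimes d}$, and both $L$ and the action of any $A\in\OO(V)$ are linear. Thus the equality $L(A\cdot{\mathcal S})=L({\mathcal S})$ for simple ${\mathcal S}$ extends by linearity to all of $V^{\otimes d}$, so $L$ is $\OO(V)$-invariant.

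Next I would address the correspondence between (1) and (2). The bijection $V^{\otimes d}\to (V^{\otimes d})^\star$ in the lemma comes from the isomorphism $V\cong V^\star$ induced by the inner product. The crucial point is that this isomorphism is $\OO(V)$-equivariant: an element $A\in\OO(V)$ satisfies $AA^{T}=I$, so it preserves the inner product, and consequently the induced isomorphism intertwines the action of $A$ on $V$ with its action on $V^\star$. Taking $d$-fold tensor products, the isomorphism $V^{\otimes d}\cong (V^{\otimes d})^\star$ is $\OO(V)$-equivariant as well. Under an equivariant isomorphism, fixed points correspond to fixed points, so invariant tensors in $V^{\otimes d}$ correspond to invariant linear functionals on $V^{\otimes d}$, i.e.\ to invariant linear maps $V^{\otimes d}\to\R$. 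Composing these two restricted bijections yields the three-way correspondence claimed by the corollary.

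The only real subtlety is the equivariance of the identification $V\cong V^\star$; this is where the hypothesis that we are working with the \emph{orthogonal} group (and not, say, $\mathrm{GL}(V)$) is used, since an arbitrary linear group would act by the transpose-inverse on $V^\star$ rather than by $A$ itself. Beyond that, the verification is routine once one observes that the bijections of Lemma~\ref{lem:TensorBijections} are natural with respect to linear automorphisms of $V$.
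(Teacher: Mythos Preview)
Your proof is correct and follows the same approach as the paper, which simply observes that the bijections of Lemma~\ref{lem:TensorBijections} preserve the $\OO(V)$-action and hence restrict to bijections on invariants. You have merely spelled out in detail what the paper states in one sentence, including the key observation that the inner-product identification $V\cong V^\star$ is $\OO(V)$-equivariant.
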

\begin{proof}
Following the proof of Lemma~\ref{lem:TensorBijections}, we see that the bijections in Lemma~\ref{lem:TensorBijections} preserve the action of the orthogonal group $\OO(V)$ and induce the desired bijections in Corollary~\ref{cor:InvariantTensors}.
\end{proof}

\subsection{The First Fundamental Theorem of Invariant Theory}

The First Fundamental Theorem of Invariant Theory for the orthogonal group (Theorem~\ref{thm:FFT} below) gives us a description of $(V^{\otimes d})^{\OO(V)}$.
If $d$ is odd then $(V^{\otimes d})^{\OO(V)}=0$.
We now will describe $(V^{\otimes d})^{\OO(V)}$ for $d=2e$ where $e$ is a positive integer.

A {\bf labeled Brauer diagram} of size $d=2e$ is a perfect matching of a complete graph where the vertices are labeled $1,2,\dots,d$ (see~\cite[Chapter 10]{GW} for more details).
\begin{example}
Below is a labeled Brauer diagram of size $6$:
\begin{equation}\label{eq:0}
{\bf D}=\vcenter{\xymatrix@=2.9ex{
1 \ar@{-}[r] & 3 & 5\ar@{-}[ld]\\
2\ar@{-}@/_/[rr] & 4 & 6
}}.
\end{equation}
We denote this diagram by $(1\ 3)(2\ 6)(4\ 5)$.
\end{example}

To a labeled Brauer diagram {\bf D} of size $d=2e$ we can associate an $\OO(V)$-invariant multi-linear map
${\mathcal M}_{\bf D}:V^{d}\to \R$
as follows. If $i_k$ is connected to $j_k$ for $k=1,2,\dots,e$ in the diagram {\bf D}, then we define
\begin{equation}
{\mathcal M}_{\bf D}(v_{1},v_{2},\dots,v_{d})= (v_{i_1}\cdot v_{j_1})(v_{i_2}\cdot v_{j_2})\cdots (v_{i_e}\cdot v_{j_e})
\end{equation}
for all $v_{1},\dots,v_{d}\in V$. By Corollary~\ref{cor:InvariantTensors}
the $\OO(V)$-invariant multilinear map ${\mathcal M}_{\bf D}$ 
corresponds to some $\OO(V)$-invariant linear map ${\mathcal L}_{\bf D}:V^{\otimes d}\to \R$ and an $\OO(V)$-invariant tensor ${\mathcal T}_{\bf D}\in (V^{\otimes d})^{\OO(V)}$, which we make more explicit now. As in the proof of Lemma~\ref{lem:TensorBijections}, the universal property of the tensor product gives us a unique linear map ${\mathcal L}_{\bf D}:V^{\otimes d}\to \R$ such that 
\begin{equation}
{\mathcal L}_{\bf D}(v_{1}\otimes v_{2}\otimes \cdots\otimes v{d})={\mathcal M}_{\bf D}(v_{1},v_{2},\dots,v_{d})=(v_{i_1}\cdot v_{j_1})(v_{i2}\cdot v_{j_2})\cdots (v_{i_e}\cdot v_{j_e}).
\end{equation}
By Corollary~\ref{cor:InvariantTensors}, there is also a unique tensor ${\mathcal T}_{\bf D}\in V^{\otimes d}$ such that 
${\mathcal L}_{\bf D}(A)={\mathcal T}_{\bf D}\cdot A$ for all tensors $A\in \OO(V)$.

\begin{example}
If {\bf D} is the diagram in (\ref{eq:0}), and $\e_1,\dots,\e_n$ is a basis of $V$, then we have
\begin{equation}
{\mathcal T}_{\bf D}=\sum_{i=1}^n\sum_{j=1}^n\sum_{k=1}^n \e_i\otimes \e_j\otimes \e_i\otimes \e_k\otimes \e_k\otimes \e_j.
\end{equation}
The indices $i,j,k$ correspond to the edges $(1\ 3)$, $(2\ 6)$ and $(4\ 5)$ respectively.
\end{example}
The proof of the following theorem is in Theorem 4.3.3 and  Proposition 10.1.3 of \cite{GW}. 
\begin{theorem}[FFT of Invariant Theory for $\OO_n$ \cite{GW,PV}]\label{thm:FFT}
The space $(V^{\otimes d})^{\OO(V)}$ of invariant tensors is spanned by all ${\mathcal T}_{\bf D}$ where ${\bf D}$ is a Brauer diagram on $d$ vertices.
\end{theorem}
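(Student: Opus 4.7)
My plan is to reduce the theorem to the classical First Fundamental Theorem for polynomial invariants of the orthogonal group, using the bijections already established in Corollary~\ref{cor:InvariantTensors}. Specifically, by that corollary it suffices to show that every $\OO(V)$-invariant multilinear map $M:V^d\to\R$ is an $\R$-linear combination of the multilinear maps ${\mathcal M}_{\bf D}$ attached to Brauer diagrams ${\bf D}$ of size $d$. Restriction to the multilinear setting is the right viewpoint because the ${\mathcal M}_{\bf D}$ are manifestly multilinear, and this formulation lets us apply polarization/restitution to pass between multilinear maps and polynomial functions on $V^d$.

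The key input is the classical description of the algebra of $\OO(V)$-invariant polynomial functions on $V^d=V\oplus\cdots\oplus V$: this algebra is generated by the pairwise inner products $q_{ij}(v_1,\dots,v_d)=v_i\cdot v_j$ for $1\le i,j\le d$. This is the polynomial FFT, and I would simply cite the proof in Goodman--Wallach (Theorem 4.3.3), since a self-contained proof would require either the Capelli identities, Weyl's unitary trick plus a density argument, or Atiyah--Bott--Patodi style reductions, none of which is short. Taking this as given, any $\OO(V)$-invariant polynomial $P$ on $V^d$ can be written as a polynomial in the variables $q_{ij}$.

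Next, I would extract the multilinear piece. An $\OO(V)$-invariant multilinear map $M:V^d\to\R$, extended to a polynomial function on $V^d$, is homogeneous of degree $1$ in each of the $d$ slot variables. By the polynomial FFT it is an $\R$-linear combination of products $q_{i_1j_1}q_{i_2j_2}\cdots q_{i_kj_k}$, and multihomogeneity of degree $(1,1,\dots,1)$ forces each index $1,2,\dots,d$ to appear in exactly one factor $q_{i_rj_r}$. This exactly means the unordered pairs $\{i_1,j_1\},\{i_2,j_2\},\dots,\{i_e,j_e\}$ (with $d=2e$) form a perfect matching on $\{1,\dots,d\}$, i.e., a Brauer diagram ${\bf D}$; and the corresponding product of inner products is precisely ${\mathcal M}_{\bf D}$. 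In particular $d$ must be even, so for odd $d$ there are no nonzero invariants, consistent with the remark preceding the theorem. Transporting this back through Corollary~\ref{cor:InvariantTensors} shows that the tensors ${\mathcal T}_{\bf D}$ span $(V^{\otimes d})^{\OO(V)}$.

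The main obstacle is of course the polynomial FFT itself, which I am not proving here but invoking; the remainder of the argument is bookkeeping: identifying which products of inner products are multilinear in all $d$ slots, and recognizing the answer as perfect matchings on $\{1,\dots,d\}$. A minor subtlety worth noting in the writeup is that the map $M\mapsto P$ from invariant multilinear maps to multihomogeneous invariant polynomials is a bijection onto the $(1,1,\dots,1)$-multihomogeneous component (the inverse being full polarization), so extracting the multilinear piece loses no information.
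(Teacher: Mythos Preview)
Your proposal is correct and matches the paper's approach: the paper does not give a proof either, but simply cites Theorem~4.3.3 and Proposition~10.1.3 of \cite{GW}. Your sketch spells out the standard reduction (polynomial FFT plus polarization to extract the multilinear, hence perfect-matching, component) that underlies those references, so you are essentially elaborating the same citation rather than doing anything different.
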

The following result is well-known (see for example~\cite{Callan, OEISa}), but the idea of the proof is useful later.

\begin{proposition}\label{prop1}
The number of
Brauer diagrams (and perfect matchings in a complete graph) for $d=2e$ vertices is $1\cdot 3\cdot 5\cdots (2e-1)$. 
\end{proposition}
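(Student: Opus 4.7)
The plan is to prove this by induction on $e$, which gives the product formula $1\cdot 3\cdot 5\cdots (2e-1)$ directly and also exposes a recursive structure on matchings that will presumably be reused in later arguments involving colored Brauer diagrams.

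For the base case $e=1$, there are exactly $d=2$ vertices, so there is a unique perfect matching $(1\ 2)$, and the formula correctly yields the empty product $1$. For the inductive step, assume the claim holds for $e-1$, i.e.\ the number of perfect matchings of the complete graph $K_{2(e-1)}$ on $2(e-1)$ labeled vertices is $1\cdot 3\cdots (2e-3)$. Given a complete graph on the $d=2e$ labeled vertices $\{1,2,\dots,2e\}$, I would single out vertex $1$. In any perfect matching, vertex $1$ is paired with exactly one of the remaining $2e-1$ vertices, giving $2e-1$ choices. Once this edge is fixed, the remaining $2(e-1)$ vertices need to be perfectly matched among themselves, which by the induction hypothesis can be done in $1\cdot 3\cdots (2e-3)$ ways. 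Multiplying yields
\begin{equation}
(2e-1)\cdot \bigl(1\cdot 3\cdots (2e-3)\bigr)=1\cdot 3\cdot 5\cdots (2e-1),
\end{equation}
completing the induction.

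As a sanity check, I would also note the equivalent closed-form expression $(2e)!/(2^e e!)$: list the $2e$ vertices in some order ($(2e)!$ arrangements), group consecutive pairs into edges, and quotient by the $e!$ ways of permuting the $e$ pairs and the $2^e$ ways of reversing each pair, all of which preserve the matching. Either derivation is routine; there is no substantial obstacle here, since the key observation is simply that fixing an edge at a distinguished vertex reduces the problem cleanly to a smaller instance of the same form. The value of stating the proof in the inductive form is that the ``pick a distinguished vertex and sum over its possible partners'' idea is precisely the recursive decomposition that one can expect to reuse when dealing with colored/trivalent variants of Brauer diagrams later in the paper.
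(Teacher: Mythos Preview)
Your proof is correct, but the recursive decomposition you use differs from the paper's. You single out vertex~$1$ in a matching on $2e$ vertices, record its partner ($2e-1$ choices), and recurse on the remaining $2e-2$ vertices. The paper instead builds \emph{up}: from a matching ${\bf D}$ on $\{1,\dots,2e\}$ it produces $2e+1$ matchings on $\{1,\dots,2e+2\}$ by adjoining two new vertices $2e+1$ and $2e+2$ and either (i) joining them to each other, or (ii) choosing $k\in\{1,\dots,2e\}$ with partner $l$, deleting the edge $(k\ l)$, and inserting the edges $(2e{+}1\ k)$ and $(2e{+}2\ l)$. Both constructions yield the same recursion $N_{e+1}=(2e+1)N_e$, and yours is certainly the more standard textbook argument.

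The reason the paper chooses its less obvious construction is that it is reused verbatim in the very next proposition, where one computes ${\mathcal T}_{\bf E}\cdot{\mathcal S}_d$ by comparing overlays of ${\bf E}'={\bf E}\cup(d{+}1\ d{+}2)$ with each ${\bf D}_i$. There the key point is that passing from ${\bf D}$ to ${\bf D}_i$ (for $i\le 2e$) enlarges one cycle in the overlay by two vertices without changing the cycle count, while ${\bf D}_{2e+1}$ adds a new $2$-cycle; this gives the factor $n+d$. Your ``delete vertex~$1$ and its partner'' move does not interact as cleanly with a fixed overlay diagram ${\bf E}$, so while your argument is a perfectly good proof of the proposition itself, it would not feed into the subsequent computation without modification. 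Your closing remark anticipates reuse, but the specific decomposition the paper actually reuses is a different one.
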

\begin{proof}
 Let $N_e$ be the number of Brauer diagrams on $2e$ nodes.
 Clearly $N_1=1$. We can construct $2e+1$ Brauer diagrams on $2e+2$ nodes from a Brauer diagram ${\bf D}$ on $2e$ nodes as follows.
We take ${\bf D}$ and add two nodes, $2e+1$ and $2e+2$. 
First, we can choose an integer $k$ with $1\leq k\leq 2e$ and let $l$ be the vertex that $k$ is connected to. Then we disconnect $k$ from $l$,
connect $2e+1$ to $k$ and connect $2e+2$ to $l$. This gives us a Brauer diagram ${\bf D}_k$ on $2e+2$ nodes.
Alternatively, we can also  connect $2e+1$ to $2e+2$ and get a Brauer diagram on $2e+2$ nodes that we call ${\bf D}_{2e+1}$.
Thus, we have constructed Brauer diagrams ${\bf D}_1,\dots,{\bf D}_{2e+1}$ from ${\bf D}$.
One can verify that we generate all Brauer diagrams on $2e+2$ nodes exactly once if we vary ${\bf D}$ over all Brauer diagrams on $2e$ nodes.
So $N_{e+1}=(2e+1)N_e$ for all $e$.
\end{proof}

\subsection{Partial Brauer diagrams}
A {\bf partial Brauer diagram} of size $d$ is a graph with $d$ vertices labeled $1,2,\dots,d$ whose edges form a partial matching.
Our convention is to draw loose edges at the vertices that are not matched.
To a partial Brauer diagram ${\bf D}$ with $e$ edges, we can associate an $\OO(V)$-invariant  multi-linear map ${\mathcal M}_{\bf D}:V^d\to V^{\otimes (d-2e)}$
and a linear map ${\mathcal L}_{\bf D}:V^{\otimes d} \to V^{\otimes d-2e}$. 
\begin{example}
For the diagram:
\begin{equation}
{\bf D}=\vcenter{\xymatrix@=2.9ex{
1 \ar@{-}[r] & 3 & 5\ar@{-}[ld]\\
2\ar@{-}[d] & 4 & 6\ar@{-}[d]\\
& &
}}
\end{equation}
we have
\begin{equation}
{\mathcal M}_{\bf D}(v_{1},v_{2},\dots,v_{6})= (v_{1}\cdot v_{3})(v_{4}\cdot v_{5}) v_{2}\otimes v_{6}\in V^{\otimes 2}
\end{equation}
for $v_{1},v_{2},\dots,v_{6}\in V$. 
\end{example}
Before giving the general rule of computing inner products of tensors associated to Brauer diagrams, we give an illustrative example.
\begin{example}
We compute the inner product ${\mathcal T}_{{\bf D}_1}$ and ${\mathcal T}_{{\bf D}_2}$ where ${\bf D}_1$ and ${\bf D}_2$ are the diagrams below:
\begin{equation}
{\bf D}_1=\vcenter{\xymatrix@=2.9ex{
1 \ar@{-}[r] & 3 & 5\ar@{-}[ld]\\
2\ar@{-}@/_/[rr] & 4 & 6
}}\qquad
{\bf D}_2=\vcenter{\xymatrix@=2.9ex{
1 \ar@{-}[d] & 3\ar@{-}[rd] & 5\ar@{-}[ld]\\
2 & 4 & 6
}}.
\end{equation}
We get
\begin{multline}
{\mathcal T}_{{\bf D}_1}\cdot {\mathcal T}_{{\bf D}_2}=\left(\sum_{i,j,k} \e_i\otimes \e_j\otimes \e_i \otimes \e_k\otimes \e_k\otimes \e_j\right)\cdot
\left(\sum_{p,q,r} \e_p\otimes \e_p\otimes \e_q\otimes \e_r\otimes \e_r\otimes \e_q\right)=\\
\sum_{i,j,k,p,q,r}(\e_i\cdot \e_p)(\e_j\cdot \e_p)(\e_i\cdot \e_q)(\e_k\cdot \e_r)(\e_k\cdot \e_r)(\e_j\cdot \e_q).
\end{multline}
To get a nonzero summand we have to have $i=j=p=q$ and $k=r$.
The result of the summation is $\sum_{i=1}^n\sum_{k=1}^n 1=n^2$. We can visualize this computation as follows
\begin{equation}
\vcenter{\xymatrix@=2.9ex{
*{\bullet} \ar@{-}[r] & *{\bullet} & *{\bullet}\ar@{-}[ld]\\
*{\bullet} \ar@{-}@/_/[rr] & *{\bullet} & *{\bullet}
}}\quad\cdot \quad
\vcenter{\xymatrix@=2.9ex{
*{\bullet}\ar@{-}[d]& *{\bullet}\ar@{-}[rd] & *{\bullet}\ar@{-}[ld]\\
*{\bullet} & *{\bullet} & *{\bullet}
}}\quad =\quad
\vcenter{\xymatrix@=2.9ex{
*{\bullet} \ar@{-}[r] \ar@{-}[d]  & *{\bullet}\ar@{-}[rd] & *{\bullet}\ar@{-}@<.2ex>[ld]\ar@{-}@<-.2ex>[ld]\\
*{\bullet} \ar@{-}@/_/[rr] & *{\bullet} & *{\bullet}
}} \quad=n^2.
\end{equation}
The edges of ${\bf D}_1$ correspond to the indices $i,j,k$ and the edges of ${\bf D}_2$ correspond to the indices $p,q,r$. We overlay the diagrams. 
The indices of the edges in a cycle must all be the same. Since there are two cycles, namely $i,p,j,q$ and $k,r$ we essentially sum over two indices and get $n^2$.
\end{example}
The general rule is clear now.
\begin{corollary}
The dot product of two tensors ${\mathcal T}_{{\bf D}_1},{\mathcal T}_{{\bf D}_2}\in V^{\otimes d}$  can be computed as follows.
We overlay the two diagrams ${\bf D}_1$ and ${\bf D}_2$ so that the (labeled) nodes coincide. Then ${\mathcal T}_{{\bf D}_1}\cdot {\mathcal T}_{{\bf D}_2}=n^k$ where $k$ is the number of cycles (including $2$-cycles).
\end{corollary}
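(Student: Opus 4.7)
The plan is to reduce the dot product to a sum over multi-indices and then observe that the combined matching constraints of the two diagrams decompose into connected components, each of which is a cycle contributing a factor of $n$.

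First I would make the coordinate description of ${\mathcal T}_{\bf D}$ explicit. If ${\bf D}$ has edges $(i_1\ j_1),\dots,(i_e\ j_e)$, then plugging the basis tensor $\e_{b_1}\otimes\cdots\otimes \e_{b_d}$ into ${\mathcal L}_{\bf D}$ gives $\prod_{s=1}^{e}\delta_{b_{i_s},b_{j_s}}$, using the defining formula for ${\mathcal M}_{\bf D}$ and orthonormality of the basis. Because ${\mathcal L}_{\bf D}(A)={\mathcal T}_{\bf D}\cdot A$, this product is exactly the $(b_1,\dots,b_d)$-entry of ${\mathcal T}_{\bf D}$. The analogous formula holds for ${\bf D}_2$.

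Next I would expand
\begin{equation}
{\mathcal T}_{{\bf D}_1}\cdot {\mathcal T}_{{\bf D}_2}=\sum_{b_1,\dots,b_d=1}^{n}\Bigl(\prod_{(i\ j)\in E({\bf D}_1)}\!\!\!\delta_{b_i,b_j}\Bigr)\Bigl(\prod_{(i\ j)\in E({\bf D}_2)}\!\!\!\delta_{b_i,b_j}\Bigr).
\end{equation}
The combined $\delta$-factors are nonzero precisely when $b_i=b_j$ whenever $i$ and $j$ are connected by an edge of either ${\bf D}_1$ or ${\bf D}_2$; equivalently, when the assignment $k\mapsto b_k$ is constant on each connected component of the overlay graph ${\bf D}_1\cup {\bf D}_2$.

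The key structural observation is that every vertex of the overlay carries exactly one edge from ${\bf D}_1$ and exactly one from ${\bf D}_2$, since both diagrams are perfect matchings. Hence every vertex has degree $2$ in ${\bf D}_1\cup {\bf D}_2$, and every connected component is a cycle whose edges alternate between ${\bf D}_1$ and ${\bf D}_2$ (a $2$-cycle occurs when the same pair is matched in both diagrams, which I would flag explicitly since it is easy to overlook). Writing $k$ for the number of such cycles, the constant values on the components are $k$ free indices each ranging over $\{1,\dots,n\}$, so the sum equals $n^k$. This finishes the argument.

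The computation is essentially routine once the coordinate formula is in hand; the only point requiring care is the degree-$2$ observation and the resulting cycle decomposition, including the convention that a shared edge contributes a $2$-cycle. That bookkeeping is the place where a careless writeup would go astray, so I would state it as a small lemma about overlays of two perfect matchings before invoking it in the index-counting step.
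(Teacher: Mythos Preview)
Your proposal is correct and follows essentially the same approach as the paper: express the entries of ${\mathcal T}_{\bf D}$ via Kronecker deltas along the edges, expand the dot product as a sum over multi-indices, and count the free indices after imposing both sets of matching constraints. If anything, your explicit degree-$2$ observation (each vertex carries exactly one edge from each perfect matching, hence the overlay decomposes into cycles) is stated more carefully than in the paper, which simply asserts that $k$ indices can be chosen freely.
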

\begin{proof}
The tensor ${\mathcal T}_{{\bf D}_1}$ is equal
to the sum of all
$\e_{i_1}\otimes \e_{i_2}\otimes \cdots\otimes \e_{i_{d}}$
with $1\leq i_1,i_2,\dots,i_d\leq n$
such that $i_j=i_k$ whenever $(j\ k)$ is an edge in ${\bf D}_1$.
Similarly, ${\mathcal T}_{{\bf D}_2}$ is the sum of all
$\e_{p_1}\otimes \e_{p_2}\otimes \cdots\otimes \e_{p_{d}}$
with $1\leq p_1,p_2,\dots,p_d\leq n$
and $p_j=p_k$ whenever $(j\ k)$ is an edge in ${\bf D}_2$. We compute
\begin{equation}\label{eq:overlay}
{\mathcal T}_{{\bf D}_1}\cdot {\mathcal T}_{{\bf D}_2}=\sum_{i_1,\dots,i_d,p_1,\dots p_d} (\e_{i_1}\cdot \e_{p_1})(\e_{i_2}\cdot \e_{p_2})\cdots (\e_{i_d}\cdot \e_{p_d}),
\end{equation}
where the sum is over all $(i_1,\dots,i_d,p_1,\dots,p_d)$ for which
$i_j=i_k$ when $(j\ k)$ is an edge in ${\bf D}_1$ and $p_j=p_k$ when $(j\ k)$ is an edge in ${\bf D}_2$.
The summand $(\e_{i_1}\cdot \e_{p_1})(\e_{i_2}\cdot \e_{p_2})\cdots (\e_{i_d}\cdot \e_{p_d})$ is equal to 1
if $i_j=p_j$ for all $j$,
and $0$ otherwise. Setting $i_j$ equal to $p_j$ corresponds to overlaying the diagrams ${\bf D}_1$ and ${\bf D}_2$. 
So (\ref{eq:overlay}) is equal to the number of
tuples $(i_1,i_2,\dots,i_d)$
with $1\leq i_1,i_2,\dots,i_d\leq n$,
and $i_j=i_k$ whenever $(j\ k)$ is an edge in ${\bf D}_1$ {\em or} in ${\bf D}_2$.
If $k$ is the number of cycles,
then we can choose exactly $k$ of indices
$i_1,i_2,\dots,i_d$ freely in the set $\{1,2,\dots,n\}$, and the other indices are uniquely determined by these choices.
This proves that (\ref{eq:overlay}) is equal to $n^k$.
\end{proof}

The proof of the following proposition is similar to the proof of Proposition~\ref{prop1}.
\begin{proposition}
Suppose that ${\bf E}$ is a Brauer diagram on $d=2e$ vertices, and  ${\mathcal S}_d=\sum_{\bf D} {\mathcal T}_{\bf D}$,
where the sum is over all Brauer diagrams ${\bf D}$ on $d$ vertices. 
Then we have ${\mathcal T}_{\bf E}\cdot {\mathcal S}_d=n(n+2)\cdots(n+d-2)$.
\end{proposition}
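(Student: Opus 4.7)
The plan is to induct on $e$, mirroring the counting argument in the proof of Proposition~\ref{prop1}. First I would observe that $\mathcal{T}_{\mathbf{E}} \cdot \mathcal{S}_d$ does not depend on the choice of $\mathbf{E}$: the tensor $\mathcal{S}_d$ is invariant under the $\Sigma_d$-action on $V^{\otimes d}$ from (\ref{eq:symmaction}), because $\Sigma_d$ simply permutes the set of all Brauer diagrams. Since this action preserves the inner product (it only permutes tensor-factor positions), we have $\mathcal{T}_{\mathbf{E}}\cdot \mathcal{S}_d = \mathcal{T}_{\pi\cdot\mathbf{E}}\cdot \mathcal{S}_d$ for every $\pi\in\Sigma_d$, and any two perfect matchings on $\{1,\dots,d\}$ are $\Sigma_d$-conjugate. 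I may therefore assume $\mathbf{E}$ contains the edge $(2e-1\ 2e)$; write $\mathbf{E}'$ for the restriction of $\mathbf{E}$ to $\{1,\dots,2e-2\}$.

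By the previous corollary, $\mathcal{T}_{\mathbf{E}}\cdot \mathcal{S}_d = \sum_{\mathbf{D}} n^{k(\mathbf{E},\mathbf{D})}$, where $k(\mathbf{E},\mathbf{D})$ is the number of cycles in the overlay. I split the sum according to how the vertices $2e-1$ and $2e$ are matched in $\mathbf{D}$. \textbf{Case A:} $(2e-1\ 2e)$ is an edge of $\mathbf{D}$. Deleting these two vertices from both $\mathbf{E}$ and $\mathbf{D}$ yields diagrams $\mathbf{E}'$ and $\mathbf{D}'$ on $2e-2$ vertices, and the overlay of $\mathbf{E}$ and $\mathbf{D}$ has exactly one extra cycle (the $2$-cycle on $\{2e-1,2e\}$) compared to that of $\mathbf{E}'$ and $\mathbf{D}'$. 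The map $\mathbf{D}\mapsto \mathbf{D}'$ is a bijection onto Brauer diagrams on $2e-2$ vertices, so this case contributes $n\cdot \mathcal{T}_{\mathbf{E}'}\cdot \mathcal{S}_{d-2}$.

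\textbf{Case B:} $2e$ is matched to some $j\in\{1,\dots,2e-2\}$ and $2e-1$ to some $k\in\{1,\dots,2e-2\}\setminus\{j\}$. I would define a contraction $\mathbf{D}\mapsto \mathbf{D}''$, where $\mathbf{D}''$ is obtained from $\mathbf{D}$ by deleting the vertices $2e-1,2e$ (with their two incident edges) and inserting the new edge $(j\ k)$. The key observation is that in the overlay of $\mathbf{E}$ and $\mathbf{D}$ the three-edge path $j \to 2e \to 2e-1 \to k$ (two $\mathbf{D}$-edges flanking the $\mathbf{E}$-edge $(2e-1\ 2e)$) sits inside some cycle, and collapsing it to the single $\mathbf{D}''$-edge $(j\ k)$ produces exactly the overlay of $\mathbf{E}'$ and $\mathbf{D}''$, with all other cycles preserved. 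Hence $k(\mathbf{E},\mathbf{D})=k(\mathbf{E}',\mathbf{D}'')$. Each $\mathbf{D}''$ has exactly $2(e-1)$ preimages under this map: pick one of its $e-1$ edges to be the collapsed one, and decide which endpoint is matched to $2e$. So this case contributes $(d-2)\cdot \mathcal{T}_{\mathbf{E}'}\cdot \mathcal{S}_{d-2}$.

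Summing the two cases gives the recursion $\mathcal{T}_{\mathbf{E}}\cdot \mathcal{S}_d = (n+d-2)\,\mathcal{T}_{\mathbf{E}'}\cdot \mathcal{S}_{d-2}$; with the base case $d=2$ (a single diagram with one cycle, value $n$) the formula $n(n+2)\cdots(n+d-2)$ follows by induction on $e$. The main technical point to justify carefully is the claimed invariance of the cycle count under the Case B contraction; once that is in hand, the enumeration of preimages and the inductive step are routine.
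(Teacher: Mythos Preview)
Your proposal is correct and follows essentially the same inductive strategy as the paper: both arguments split the sum $\mathcal{S}_d$ according to how the two ``new'' vertices are matched, identify the same two cases (your Case~A is the paper's $\mathbf{D}_{d+1}$, your Case~B contraction is the inverse of the paper's construction $\mathbf{D}\mapsto \mathbf{D}_1,\dots,\mathbf{D}_d$ from Proposition~\ref{prop1}), and use the cycle-counting corollary to obtain the recursion factor $(n+d-2)$. The only cosmetic differences are that you induct downward rather than upward and that you spell out the $\Sigma_d$-symmetry argument explicitly, where the paper just says ``by symmetry.''
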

\begin{proof}
Let  {\bf E} be a Brauer diagram on $d=2e$ vertices and let ${\bf E}'$ be the diagram obtained from ${\bf E}$ by adding the edge $(d+1\ d+2)$.
 Recall that in the proof of Proposition~\ref{prop1}, we constructed diagrams ${\bf D}_1,{\bf D}_2,\dots,{\bf D}_{d+1}$ from
 the Brauer diagram {\bf D} on $d$ vertices.
 Note that if $1\leq i\leq d$, we get ${\mathcal T}_{{\bf E}'}\cdot {\mathcal T}_{{\bf D}_i}={\mathcal T}_{\bf E}\cdot {\mathcal T}_{{\bf D}}$ because we get the diagram of ${\mathcal T}_{{\bf E}'}\cdot {\mathcal T}_{{\bf D}_i}$ from  that of ${\mathcal T}_{{\bf E}}\cdot {\mathcal T}_{\bf D}$
 by changing one $k$-cycle to a $(k+2)$-cycle. We also have ${\mathcal T}_{{\bf E}'}\cdot {\mathcal T}_{{\bf D}_{d+1}}=n({\mathcal T}_{\bf E}\cdot {\mathcal T}_{\bf D})$ as we are adding one $2$-cycle.
 This shows that ${\mathcal T}_{{\bf E}'}\cdot {\mathcal S}_{d+1}=(n+d){\mathcal T}_{\bf E}\cdot {\mathcal S}_d.$ The proposition then follows by induction and symmetry.
 \end{proof}
 \begin{example}
 For $e=2$, we get
 \begin{equation}
\IIblack\quad\cdot\quad\left( \IIblack+\Zblack+\Xblack\right)=
\vcenter{\xymatrix@=2.9ex{
*{\bullet}\ar@{-}@<.2ex>[d] \ar@{-}@<-.2ex>[d] & *{\bullet} \ar@{-}@<.2ex>[d] \ar@{-}@<-.2ex>[d]\\
*{\bullet} & *{\bullet}}}+
\vcenter{\xymatrix@=2.9ex{
*{\bullet}\ar@{-}[d]\ar@{-}[r] & *{\bullet}\ar@{-}[d]\\
*{\bullet}\ar@{-}[r] & *{\bullet}}}+\vcenter{\xymatrix@=2.9ex{
*{\bullet}\ar@{-}[rd] \ar@{-}[d]& *{\bullet}\ar@{-}[ld]\ar@{-}[d]\\
*{\bullet} & *{\bullet}}}
=n^2+n+n=n(n+2).
 \end{equation}
 \end{example}
 \subsection{The expected rank 1 unit tensor}
Let $S^{n-1}=\{v\in V\mid \|v\|=1\}$ be the unit sphere equipped with the $\OO(V)$-invariant volume form $d\mu$ that is normalized
such that $\int_{S^{n-1}}d\mu=1$.

\begin{proposition}\label{integral}
If we integrate 
\begin{equation}
v^{\otimes 2e}=\underbrace{v\otimes v\otimes \cdots \otimes v}_{2e}
\end{equation} over $S^{n-1}$ then we get
$\int_{S^{n-1}} v^{\otimes 2e}\,d\mu={\textstyle  \frac{1}{n(n+2)\cdots (n+2e-2)}}{\mathcal S}_{2e}$.
\end{proposition}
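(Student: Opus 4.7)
Let $I = \int_{S^{n-1}} v^{\otimes 2e}\, d\mu$. The strategy is to identify $I$ as an element of a one-dimensional subspace of $V^{\otimes 2e}$ that contains $\mathcal{S}_{2e}$, and then pin down the scalar by pairing both sides against $\mathcal{T}_{\bf E}$ for an arbitrary Brauer diagram $\bf E$.

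First I would observe two structural properties of $I$. Since $d\mu$ is $\OO(V)$-invariant and $A\cdot v^{\otimes 2e} = (Av)^{\otimes 2e}$, a change of variables gives $A\cdot I = I$ for every $A \in \OO(V)$, so $I \in (V^{\otimes 2e})^{\OO(V)}$. Moreover, $v^{\otimes 2e}$ is fixed by the $\Sigma_{2e}$-action of (\ref{eq:symmaction}), hence so is $I$. Next I would verify that $\mathcal{S}_{2e}$ is also $\Sigma_{2e}$-invariant: a permutation $\pi \in \Sigma_{2e}$ sends the Brauer diagram ${\bf D}$ with edges $(i_k\ j_k)$ to the Brauer diagram $\pi {\bf D}$ with edges $(\pi(i_k)\ \pi(j_k))$, and one checks from the definitions that $\pi \cdot \mathcal{T}_{\bf D} = \mathcal{T}_{\pi {\bf D}}$, so summing over all $\bf D$ gives $\pi \cdot \mathcal{S}_{2e} = \mathcal{S}_{2e}$.

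The key reduction is then that the subspace of $\OO(V)$-invariant, $\Sigma_{2e}$-invariant tensors in $V^{\otimes 2e}$ is one-dimensional. Via the identification in Lemma~\ref{lem:TensorBijections}, symmetric tensors in $V^{\otimes 2e}$ correspond to homogeneous polynomials of degree $2e$ on $V$, and the $\OO(V)$-invariant ones are precisely polynomials in $\|v\|^2$ (a standard consequence of classical invariant theory for $\OO_n$; it is essentially the $k$-variable case from the paragraph preceding Theorem~\ref{thm:FFT} applied to a single vector). In degree $2e$ this space is spanned by $\|v\|^{2e}$, so it is one-dimensional. Since $\mathcal{S}_{2e}$ is a nonzero (as will be seen from the pairing below) element of this space, we conclude that $I = c\,\mathcal{S}_{2e}$ for some scalar $c$.

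To determine $c$, I would pair both sides with $\mathcal{T}_{\bf E}$ for any Brauer diagram $\bf E$ on $2e$ vertices. On one hand, using the defining formula for $\mathcal{L}_{\bf E}$ and the fact that every edge contributes a factor $v\cdot v = 1$ on the unit sphere,
\begin{equation}
\mathcal{T}_{\bf E}\cdot I \;=\; \int_{S^{n-1}} \mathcal{L}_{\bf E}(v^{\otimes 2e})\, d\mu \;=\; \int_{S^{n-1}} \|v\|^{2e}\, d\mu \;=\; 1.
\end{equation}
On the other hand, by the proposition immediately preceding this statement, $\mathcal{T}_{\bf E}\cdot \mathcal{S}_{2e} = n(n+2)\cdots(n+2e-2)$. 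Equating gives $c = 1/\bigl(n(n+2)\cdots(n+2e-2)\bigr)$, which is exactly the claim.

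The only step requiring any real care is the one-dimensionality of the $\OO(V)$-invariant symmetric subspace; the rest is a routine application of invariance of the measure, the FFT (Theorem~\ref{thm:FFT}), and the cycle-counting pairing formula already established.
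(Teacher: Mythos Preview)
Your proof is correct and follows essentially the same strategy as the paper: show that the integral lies in a one-dimensional space containing $\mathcal{S}_{2e}$, then fix the constant by pairing with a single $\mathcal{T}_{\bf E}$ (using the previous proposition for $\mathcal{T}_{\bf E}\cdot\mathcal{S}_{2e}$ and the computation $\mathcal{T}_{\bf E}\cdot v^{\otimes 2e}=\|v\|^{2e}=1$). The one place where you diverge is the justification of one-dimensionality. You invoke the classification of $\OO(V)$-invariant polynomials on $V$ (they are polynomials in $\|v\|^2$). The paper instead stays entirely inside the Brauer-diagram framework: by Theorem~\ref{thm:FFT} the integral is a linear combination of the $\mathcal{T}_{\bf D}$'s, and since $\Sigma_{2e}$ acts transitively on the set of Brauer diagrams (any perfect matching can be sent to any other by relabeling vertices), $\Sigma_{2e}$-invariance forces all coefficients to be equal, hence the integral is a multiple of $\mathcal{S}_{2e}$. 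This avoids appealing to the external fact about $\OO_n$-invariant polynomials and keeps the argument self-contained relative to what has already been set up in Section~\ref{sec:2}.
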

\begin{proof}
Let $U=\int_{S^{n-1}} v^{\otimes 2e}\,d\mu$. Since $U$ is $\OO(V)$-invariant, it is a linear combination of Brauer diagrams. The tensor $U$ 
is also invariant under the action of the symmetric group 
$\Sigma_{2e}$, where the action of the symmetric group is given in (\ref{eq:symmaction}). This shows that each Brauer diagram appears with the same coefficient in $U$.
So we have $U=C {\mathcal S}_{2e}$ where $C$ is some constant. Let ${\bf D}$ be some Brauer diagram on $2e$ vertices. 
 The value of $C$ is obtained from
\begin{multline}
1=\int_{S^{n-1}}d\mu=\int_{S^{n-1}} ({\mathcal T}_{\bf D}\cdot v^{\otimes 2e})d\mu={\mathcal T}_{\bf D}\cdot \int_{S^{n-1}} v^{\otimes 2e}\,d\mu=\\=C ({\mathcal T}_{\bf D}\cdot {\mathcal S}_{2e})=Cn(n+2)\cdots (n+2e-2).
\end{multline}
\end{proof}

\section{Computations with 3-way tensors}\label{sec3}
\subsection{Colored Brauer diagrams}
We now consider 3 Euclidean $\R$-vector spaces $R$, $G$ and $B$ of dimension $p$, $q$ and $r$ respectively. 
The tensor product space $V=R\otimes G\otimes B$ is a representation of $H:=\OO(R)\times \OO(G)\times \OO(B)$. We keep the notations introduced in Sections \ref{sec:1} and \ref{sec:2} based on the fact that tensor product of vector spaces is a vector space itself.
We are interested in $H$-invariant tensors in $V^{\otimes d}$. 
We have an explicit linear isomorphism $\psi:R^{\otimes d}\otimes G^{\otimes d}\otimes B^{\otimes d}\to V^{\otimes d}$ defined by
\begin{equation}
\psi\big((a_1\otimes \cdots\otimes a_d)\otimes (b_1\otimes \cdots\otimes b_d)\otimes (c_1\otimes \cdots \otimes c_d)\big)= (a_1\otimes b_1\otimes c_1)\otimes \cdots \otimes (a_d\otimes b_d\otimes c_d),
\end{equation}
where $a_i \in R, b_i \in G~\text{and}~c_i \in B~\text{for}~i=1,\ldots,d.$
Restriction to $H$-invariant tensors gives an isomorphism
\begin{equation}\label{eq:2}
\psi:(R^{\otimes d})^{\OO(R)}\otimes (G^{\otimes d})^{\OO(G)}\otimes (B^{\otimes d})^{\OO(B)}\to (V^{\otimes d})^H.
\end{equation}
It follows from Theorem \ref{thm:FFT} that the space $(R^{\otimes d})^{\OO(R)}$ of invariant tensors is spanned by tensors corresponding to Brauer diagrams on the set $1,2,\dots,d.$ 
We will use red edges for these diagrams. The space $(G^{\otimes d})^{\OO(G)}$ is spanned by tensors corresponding to green Brauer diagrams on the set $1,2,\dots,d$
and the space  $(B^{\otimes d})^{\OO(B)}$ is spanned by tensors corresponding to blue Brauer diagrams on $1,2,\dots,d$.
Using the isomorphism (\ref{eq:2}), we see that $(V^{\otimes d})^H$ is spanned by diagrams with vertices $1,2,3,\dots,d$ and red, green and blue edges
such that for each of the colors we have a perfect matching. This means that each vertex has exactly one red, one green and one blue edge.

\begin{definition}
A {\bf colored Brauer diagram} of size $d=2e$ is a graph with $d$ vertices labeled $1,2,\dots,d$ and $e$ red, $e$ green and $e$ blue edges
such that for each color, the edges of that color form a perfect matching.
\end{definition}
A colored Brauer diagram ${\bf D}$ on $d$ vertices
is an overlay of a red diagram ${\bf D}_R$,
a green diagram ${\bf D}_G$ and a blue diagram ${\bf D}_B$. To a colored Brauer diagram ${\bf D}$ we can associate an invariant tensor ${\mathcal T}_{\bf D}\in (V^{\otimes d})^H$
by
\begin{equation}
    {\mathcal T}_{\bf D}=\psi\big({\mathcal T}_{{\bf D}_R}\otimes {\mathcal T}_{{\bf D}_G}\otimes {\mathcal T}_{{\bf D}_B}\big).
\end{equation}
\begin{proposition}
The space $(V^{\otimes d})^H$ is spanned by all ${\mathcal T}_{\bf D}$, where ${\bf D}$ is a colored Brauer diagram on $d$ vertices.
\end{proposition}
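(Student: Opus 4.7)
The plan is to deduce the statement directly from the isomorphism $\psi$ in (\ref{eq:2}) combined with Theorem~\ref{thm:FFT} applied separately to each of the three colors. The logical flow is: decompose via $\psi$, apply the single-color FFT in each tensor factor, then recombine under $\psi$ to see that the resulting spanning vectors are precisely the tensors indexed by colored Brauer diagrams.

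More concretely, I would proceed as follows. First, by (\ref{eq:2}) the restricted map
\[
\psi:(R^{\otimes d})^{\OO(R)}\otimes (G^{\otimes d})^{\OO(G)}\otimes (B^{\otimes d})^{\OO(B)}\longrightarrow (V^{\otimes d})^H
\]
is a linear isomorphism, so it suffices to exhibit a spanning set of the left-hand side and push it through $\psi$. Second, by Theorem~\ref{thm:FFT} applied to $R$, the space $(R^{\otimes d})^{\OO(R)}$ is spanned by $\{{\mathcal T}_{{\bf D}_R}\}$ as ${\bf D}_R$ ranges over all (red) Brauer diagrams on $d$ vertices, and similarly for $G$ with green diagrams ${\bf D}_G$ and for $B$ with blue diagrams ${\bf D}_B$. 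Since a tensor product of spanning sets spans the tensor product, the pure tensors ${\mathcal T}_{{\bf D}_R}\otimes {\mathcal T}_{{\bf D}_G}\otimes {\mathcal T}_{{\bf D}_B}$ span the domain of $\psi$.

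Finally, applying $\psi$ and using the defining formula
\[
{\mathcal T}_{\bf D}=\psi\big({\mathcal T}_{{\bf D}_R}\otimes {\mathcal T}_{{\bf D}_G}\otimes {\mathcal T}_{{\bf D}_B}\big),
\]
where ${\bf D}$ is the colored Brauer diagram obtained by overlaying ${\bf D}_R$, ${\bf D}_G$, ${\bf D}_B$, we conclude that $(V^{\otimes d})^H$ is spanned by the ${\mathcal T}_{\bf D}$ as ${\bf D}$ ranges over all colored Brauer diagrams on $d$ vertices. Note that every triple $({\bf D}_R,{\bf D}_G,{\bf D}_B)$ of monochromatic perfect matchings corresponds to exactly one colored Brauer diagram in the sense of the definition above, so no diagrams are missed or double-counted at the level of spanning.

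There is essentially no obstacle here beyond bookkeeping: the nontrivial input (Theorem~\ref{thm:FFT}) and the nontrivial identification (the isomorphism $\psi$ on invariants in (\ref{eq:2})) are already in place. The only point to state carefully is that one really needs $\psi$ to restrict to an isomorphism on the invariant subspaces, which follows because $\psi$ is $H$-equivariant (with $\OO(R)$, $\OO(G)$, $\OO(B)$ acting on the respective tensor factors) and an iso on the full spaces; this was invoked in passing to (\ref{eq:2}) and so may be assumed here.
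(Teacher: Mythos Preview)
Your proposal is correct and follows essentially the same argument as the paper: apply Theorem~\ref{thm:FFT} to each color separately, use that tensor products of spanning sets span the tensor product, and push the resulting spanning set through the isomorphism $\psi$ of (\ref{eq:2}). Your added remark about $H$-equivariance of $\psi$ justifying the restriction to invariants is a nice bit of extra care that the paper leaves implicit.
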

\begin{proof}
It follows from Theorem \ref{thm:FFT} that the space $(R^{\otimes d})^{\OO(R)}$
is spanned by tensors ${\mathcal{T}}_{{\bf D}_R}$
where ${\bf D}_R$ is a red Brauer diagram on $d$ vertices.
Similarly, $(G^{\otimes d})^{\OO(G)}$ is spanned by
tensors ${\mathcal T}_{{\bf D}_G}$ and 
$(B^{\otimes d})^{\OO(B)}$ is spanned by
tensors ${\mathcal T}_{{\bf D}_B}$ where ${\bf D}_G$ and ${\bf D}_B$
are green and blue Brauer diagrams on $d$ vertices respectively.
The space 
$(R^{\otimes d})^{\OO(R)}\otimes (G^{\otimes d})^{\OO(G)}\otimes (B^{\otimes d})^{\OO(B)}$
is spanned by tensors of the form ${\mathcal T}_{{\bf D}_R}\otimes {\mathcal T}_{{\bf D}_G}\otimes {\mathcal T}_{{\bf D}_B}$. Via the isomorphism $\psi$ in (\ref{eq:2}), $(V^{\otimes d})^{H}$ is spanned by all ${\mathcal T}_{{\bf D}}=\psi({\mathcal T}_{{\bf D}_R}\otimes {\mathcal T}_{{\bf D}_G}\otimes {\mathcal T}_{{\bf D}_B})$ where ${\bf D}$ is a colored Brauer diagram
that is the overlay of ${\bf D}_R$, ${\bf D}_G$ and
${\bf D}_B$.
\end{proof}
Using the bijections from Lemma~\ref{lem:TensorBijections}, every colored Brauer diagram ${\bf D}$ on $d$ vertices corresponds to a linear map ${\mathcal L}_{\bf D}:V^{\otimes d}\to \R$ given by ${\mathcal L}_{\bf D}({\mathcal A})={\mathcal T}_{\bf D}\cdot {\mathcal A}$ for all ${\mathcal A}\in H.$
And this linear map corresponds to a multilinear map $V^d\to \R$ defined by ${\mathcal M}_{\bf D}({\mathcal A}_1,{\mathcal A}_2,\cdots,{\mathcal A}_d)={\mathcal L}_{\bf D}({\mathcal A}_1\otimes {\mathcal A}_2\otimes \cdots \otimes {\mathcal A}_d)$
for all tensors ${\mathcal A}_1,{\mathcal A}_2,\dots, {\mathcal A}_d\in V=R\otimes G\otimes B$.

\begin{corollary}\label{cor:InvariantTensorsH}
There are bijections between the following sets:
\begin{enumerate}
    \item $(V^{\otimes d})^{H}$;
    \item the set of $H$-invariant linear maps $V^{\otimes d}\to \R$;
    \item the set of $H$-invariant multilinear maps $V^d\to \R$.
\end{enumerate}
\end{corollary}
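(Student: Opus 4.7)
The plan is to mirror the proof of Corollary~\ref{cor:InvariantTensors} almost verbatim, the only new ingredient being that the acting group is now $H=\OO(R)\times \OO(G)\times \OO(B)$ rather than a single orthogonal group. First I would invoke Lemma~\ref{lem:TensorBijections} applied to the Euclidean space $V=R\otimes G\otimes B$ (it has a natural inner product, the tensor product of the inner products on $R$, $G$, $B$, so the lemma applies). This immediately yields the three set-theoretic bijections between $V^{\otimes d}$, the linear maps $V^{\otimes d}\to \R$, and the multilinear maps $V^d\to \R$, without any reference to group actions.

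The second step is to verify that each of these bijections is $H$-equivariant, so that restricting to $H$-fixed elements on both sides produces the desired bijections in Corollary~\ref{cor:InvariantTensorsH}. For the bijection between multilinear maps and linear maps via $\ell=L\circ\iota$, the multilinearity map $\iota:V^d\to V^{\otimes d}$ commutes with the diagonal $H$-action since $H$ acts linearly on each factor. Hence $\ell$ is $H$-invariant if and only if $L$ is. For the bijection between linear maps and tensors, one uses the fact that the identification $V\cong V^\star$ coming from the inner product is $H$-equivariant, because each of the three orthogonal groups preserves its inner product on $R$, $G$, $B$; consequently the induced identification $V^{\otimes d}\cong (V^{\otimes d})^\star$ is also $H$-equivariant, and invariant tensors correspond to invariant linear functionals.

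Combining the two equivariance statements shows that the bijections of Lemma~\ref{lem:TensorBijections} restrict to bijections on the $H$-fixed loci, which is exactly the content of Corollary~\ref{cor:InvariantTensorsH}. There is no real obstacle here: the argument is a routine equivariance check, identical in structure to the one used to deduce Corollary~\ref{cor:InvariantTensors} from Lemma~\ref{lem:TensorBijections}. If anything warrants a remark, it is simply that the inner product on $V=R\otimes G\otimes B$ used to identify $V$ with $V^\star$ must be chosen as the tensor product of the three inner products so that the whole group $H$, not merely a single orthogonal factor, preserves it.
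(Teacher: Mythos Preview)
Your proposal is correct and follows essentially the same approach as the paper, which simply states that the proof is identical to that of Corollary~\ref{cor:InvariantTensors} with $\OO(V)$ replaced by $H$. Your additional remark that the inner product on $V=R\otimes G\otimes B$ should be the tensor product of the three inner products (so that all of $H$ preserves it) is a helpful clarification that the paper leaves implicit.
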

\begin{proof}
The proof is the same as for Corollary~\ref{cor:InvariantTensors}, but with $\OO(V)$ replaced by $H$.
\end{proof}

For example,
the colored Brauer diagram
{\bf D}: 
\begin{equation}\label{eq:3}
\vcenter{\xymatrix@=2.9ex{
1 \ar@[red]@{-}[r]\ar@{-}@[green][rd]  \ar@{-}@[blue][d]  & 2\ar@{-}@[green][ld]   \\
3 & 4 \ar@[red]@{-}[l] \ar@{-}@[blue][u] }}
\end{equation}
corresponds to the $H$-invariant linear map ${\mathcal L}_{\bf D}:V^{\otimes 4}=(R\otimes G\otimes B)^{\otimes 4}\to \R$ defined by
\begin{multline}\label{eq:4}
(a_1\otimes b_1\otimes c_1)\otimes  (a_2\otimes b_2\otimes c_2)\otimes (a_3\otimes b_3\otimes c_3)\otimes (a_4\otimes b_4\otimes c_4)\mapsto\\
(a_1\cdot a_2)(a_3\cdot a_4)(b_1\cdot b_4)(b_2\cdot b_3)(c_1\cdot c_3)(c_2\cdot c_4).
\end{multline}
In a similar way, we define an $H$-invariant multilinear map ${\mathcal M}_{\bf D}:V^{\otimes d}\to \R$ for every colored Brauer diagram {\bf D} of size $d$ (i.e., with $d$ vertices). We can view ${\mathcal M}_{\bf D}$ as a tensor in $(V^\star)^{\otimes d}\cong V^{\otimes d}$. 
Viewed as a tensor in $V^{\otimes d}$ we will denote it by ${\mathcal T}_{\bf D}$.

If ${\bf D}$ is a colored Brauer diagram of size $d$, then the polynomial function defined on $V$ by
\begin{equation}\label{eq:TMD}
{\mathcal T}\mapsto {\mathcal M}_{\bf D}(\underbrace{{\mathcal T},{\mathcal T},\dots,{\mathcal T}}_d)
\end{equation}
will be denoted by ${\mathcal P}_{\bf D}({\mathcal T})$. The function ${\mathcal P}_{\bf D}$ is an $H$-invariant polynomial function on $V$ of degree $d$.
Note that ${\mathcal P}_{\bf D}$ does not depend on the labeling of the vertices of ${\bf D}$. For example, if we remove the labeling
from  the diagram {\bf D} in (\ref{eq:3}) we get an unlabeled diagram 
\begin{equation}
\overline{\bf D}: \vcenter{\Xdiagram}
\end{equation}
and we define ${\mathcal P}_{\overline{\bf D}}={\mathcal P}_{\bf D}$. In coordinates, if we write ${\mathcal T}=\sum_{i=1}^p\sum_{j=1}^q\sum_{k=1}^r t_{ijk}\e_i\otimes \e_j\otimes \e_k\in R\otimes G\otimes B$, then we have
\begin{equation}\label{eq:pd_def}
{\mathcal  P}_{\bf D}({\mathcal T})=\sum_{a=1}^p\sum_{b=1}^p\sum_{c=1}^q\sum_{d=1}^q\sum_{e=1}^r\sum_{f=1}^r t_{ace}t_{adf}t_{bde}t_{bcf}.
\end{equation}
\begin{proposition}\label{prop:Hinvariants}
The space of $H$-invariant polynomial functions on $V=R\otimes G\otimes B$ is spanned by all ${\mathcal P}_{\bf D}$ where ${\bf D}$ is a colored Brauer diagram.
\end{proposition}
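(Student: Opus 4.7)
The plan is to reduce the proposition to Corollary~\ref{cor:InvariantTensorsH}, which already describes the $H$-invariant multilinear maps $V^d \to \R$ in terms of colored Brauer diagrams.

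First, since $H$ acts linearly on $V$ it preserves the degree of a polynomial function, so the $\R$-algebra of polynomial functions $V \to \R$ decomposes as a direct sum of its homogeneous components, each an $H$-submodule. Every $H$-invariant polynomial is then the sum of its (automatically $H$-invariant) homogeneous pieces, so it suffices to prove the proposition in each fixed degree $d$.

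Second, I would use polarization. Working over $\R$ (characteristic zero), the restriction-to-diagonal map sending a multilinear form $M : V^d \to \R$ to the homogeneous polynomial ${\mathcal T} \mapsto M({\mathcal T},{\mathcal T},\dots,{\mathcal T})$ is a surjection from the space of all $d$-linear forms onto the space of homogeneous polynomials of degree $d$ on $V$; indeed, for a symmetric $d$-linear form this map is a bijection (polarization), and any $d$-linear form differs from its symmetrization by something that restricts to the same function on the diagonal. This surjection is $H$-equivariant because the diagonal action of $H$ on $V^d$ corresponds, under restriction, to the natural action of $H$ on polynomial functions. Consequently it restricts to a surjection from $H$-invariant $d$-linear forms onto $H$-invariant homogeneous polynomials of degree $d$.

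Third, by Corollary~\ref{cor:InvariantTensorsH} the space of $H$-invariant multilinear maps $V^d \to \R$ is spanned by the maps ${\mathcal M}_{\bf D}$ as ${\bf D}$ ranges over colored Brauer diagrams of size $d$. Applying the surjection of the previous paragraph sends ${\mathcal M}_{\bf D}$ to ${\mathcal P}_{\bf D}$ by definition~\eqref{eq:TMD}, so the ${\mathcal P}_{\bf D}$'s span the $H$-invariant homogeneous polynomials of degree $d$. Summing over $d$ proves the proposition.

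There is no real obstacle here; the only thing to be a little careful about is that the individual ${\mathcal M}_{\bf D}$ need not be symmetric, but this does not matter because symmetrization is built into the restriction-to-diagonal map, and the symmetrization of an $H$-invariant form is still $H$-invariant. Everything else is a straightforward application of Corollary~\ref{cor:InvariantTensorsH}.
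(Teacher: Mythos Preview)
Your proposal is correct and follows essentially the same route as the paper: both arguments use the restriction-to-diagonal map $\gamma$ from $d$-multilinear forms on $V$ to degree-$d$ polynomials, observe that it is $H$-equivariant and surjective on invariants, and then invoke the fact that the $H$-invariant multilinear forms are spanned by the ${\mathcal M}_{\bf D}$. The only cosmetic difference is that the paper cites \cite[Lemma~4.2.7]{GW} for the surjectivity of $\gamma$ on invariants, whereas you supply the polarization argument directly (an $H$-invariant homogeneous polynomial has an $H$-invariant symmetric multilinear preimage); your remark about first reducing to a fixed degree is implicit in the paper's restriction to $\R[V]_d$.
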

\begin{proof}
Let $(V^{\otimes d})^\star$ be the space of multilinear maps, and $\R[V]_d$ be the space
of homogeneous polynomial functions on $V$ of degree $d$. We have a linear map $\gamma:(V^{\otimes d})^\star\to \R[V]_d$ defined as follows:
If ${\mathcal M}:V^d\to \R$ is multilinear, 
then ${\mathcal P}=\gamma({\mathcal M})$ is given by 
\begin{equation}
{\mathcal P}({\mathcal T})={\mathcal M}(\underbrace{{\mathcal T},{\mathcal T},\dots,{\mathcal T}}_d).
\end{equation}
for all ${\mathcal T}\in V$. For a colored Brauer diagram ${\bf D}$ we have by definition $\gamma({\mathcal M}_{\bf D})={\mathcal P}_{\bf D}$ (see~(\ref{eq:TMD})). The surjective map $\gamma$ restricts to a surjective linear map of $H$-invariant subspaces $((V^{\otimes d})^\star)^{H}\to \R[V]_d^H$, 
which is also surjective by \cite[Lemma 4.2.7]{GW}.
Since $((V^{\otimes d})^\star)^H$ is spanned by the tensors ${\mathcal M}_{\bf D}$ where ${\bf D}$
is a colored Brauer diagram, $\R[V]_d^H$ is spanned by all ${\mathcal P}_{\bf D}=\gamma({\mathcal M}_{\bf D})$.
\end{proof}

If ${\bf D}\coprod {\bf E}$ is the disjoint union of colored Brauer diagrams, then we have ${\mathcal P}_{{\bf D}\coprod {\bf E}}={\mathcal P}_{\bf D}{\mathcal P}_{\bf E}$. 
\begin{corollary}
The ring of polynomial $H$-invariant polynomial functions  on $V$ is generated by invariants of the form
 ${\mathcal P}_{\bf D}$ where ${\bf D}$ is a connected colored Brauer diagram.
 \end{corollary}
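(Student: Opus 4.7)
The plan is to combine Proposition~\ref{prop:Hinvariants} with the multiplicativity property ${\mathcal P}_{{\bf D}\coprod {\bf E}}={\mathcal P}_{\bf D}{\mathcal P}_{\bf E}$ that is stated just before the corollary. Proposition~\ref{prop:Hinvariants} already tells us that, as a $\R$-vector space, the space $\R[V]_d^H$ of homogeneous $H$-invariants of degree $d$ is spanned by the polynomials ${\mathcal P}_{\bf D}$ where ${\bf D}$ ranges over colored Brauer diagrams of size $d$. Hence the entire invariant ring $\R[V]^H = \bigoplus_d \R[V]_d^H$ is linearly spanned by the ${\mathcal P}_{\bf D}$'s; in particular, it is generated as a $\R$-algebra by them. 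So the only task is to cut these generators down to those coming from \emph{connected} diagrams.

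For that step, I would take an arbitrary colored Brauer diagram ${\bf D}$ and write it as the disjoint union ${\bf D} = {\bf D}_1\coprod {\bf D}_2\coprod\cdots\coprod {\bf D}_s$ of its connected components (where connectedness is with respect to the underlying graph ignoring edge colors — each ${\bf D}_i$ is itself a colored Brauer diagram, since at every vertex all three colored edges stay in the same component). Iterating the multiplicativity identity ${\mathcal P}_{{\bf D}_i \coprod {\bf D}_j}={\mathcal P}_{{\bf D}_i}{\mathcal P}_{{\bf D}_j}$ then gives
\begin{equation}
{\mathcal P}_{\bf D} = {\mathcal P}_{{\bf D}_1}{\mathcal P}_{{\bf D}_2}\cdots {\mathcal P}_{{\bf D}_s},
\end{equation}
so every spanning invariant ${\mathcal P}_{\bf D}$ is already a polynomial (indeed a monomial) in the invariants associated to connected colored Brauer diagrams. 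Combined with the first paragraph, this shows that the connected ${\mathcal P}_{\bf D}$ generate $\R[V]^H$ as a $\R$-algebra.

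There is no real obstacle here; the argument is essentially bookkeeping. The only points that need a brief verification are (i) that the connected components of a colored Brauer diagram really are themselves colored Brauer diagrams, which follows from the fact that each vertex has a unique edge of each color and therefore its whole colored neighborhood lies in the same connected component, and (ii) the multiplicativity identity for $\coprod$, which is immediate from the definition of ${\mathcal P}_{\bf D}$ in coordinates (the summation in \eqref{eq:pd_def} factors according to the connected components because the index sets on the different components are independent).
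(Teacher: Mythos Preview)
Your proposal is correct and follows essentially the same route as the paper: invoke Proposition~\ref{prop:Hinvariants} to get the ${\mathcal P}_{\bf D}$ as a spanning set, decompose an arbitrary ${\bf D}$ into its connected components, and apply the multiplicativity ${\mathcal P}_{{\bf D}\coprod{\bf E}}={\mathcal P}_{\bf D}{\mathcal P}_{\bf E}$ to express ${\mathcal P}_{\bf D}$ as a product of invariants from connected diagrams. You supply a couple of extra sanity checks (that each component is again a colored Brauer diagram, and why the multiplicativity holds), but otherwise the arguments coincide.
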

 \begin{proof}
 By Proposition~\ref{prop:Hinvariants} the space of $H$-invariant polynomials is spanned by invariants of the form ${\mathcal P}_{{\bf D}}$ where ${\bf D}$ is a colored Brauer diagram. We can write ${\bf D}={\bf D}_1\coprod {\bf D}_2\coprod \cdots\coprod {\bf D}_k$
 where ${\bf D}_i$ is a connected colored Brauer diagram for every $i$. We have
 \begin{equation}
 {\mathcal P}_{{\bf D}}= {\mathcal P}_{{\bf D}_1}
 {\mathcal P}_{{\bf D}_2}\cdots {\mathcal P}_{{\bf D}_k}.
 \end{equation}
 \end{proof}
 \begin{definition} 
We can define ${\mathcal L}_{\bf D}$,
${\mathcal T}_{\bf D}$, ${\mathcal M}_{\bf D}$ and ${\mathcal P}_{\bf D}$ when ${\bf D}$ is a linear combination of diagrams by assuming that these depend linearly on ${\bf D}$. For example, if ${\bf D}=\lambda_1{\bf D}_1+\lambda_2{\bf D}_2+\cdots+\lambda_k {\bf D}_k$, then ${\mathcal P}_{\bf D}=\lambda_1{\mathcal P}_{{\bf D}_1}+\lambda_2{\mathcal P}_{{\bf D}_2}+\cdots+\lambda_k{\mathcal P}_{{\bf D}_k}$ where $\lambda_i \in \mathbb{R}$ for $i=1, \ldots, k.$
 \end{definition}

\subsection{Generators of polynomial tensor invariants}
There is only one connected colored Brauer diagram on 2 vertices:
\begin{equation}
\Plankton .
\end{equation}
There are $4$ connected colored (unlabeled) Brauer diagrams on $4$ nodes:
\begin{equation}
\RedFrame\quad \GreenFrame\quad \BlueFrame\quad \Tetrahedron\quad 
\end{equation}
There are $11$ connected colored Brauer diagrams on $6$ nodes:
\begin{equation}
\vcenter{
\xymatrix@=2.9ex{
*{\bullet} \ar@[red]@{-}[r] \ar@[green]@<.15ex>@{-}[d]\ar@[blue]@<-.15ex>@{-}[d] & *{\bullet}\ar@[blue]@{-}[d] & *{\bullet}\ar@[blue]@<.15ex>@{-}[d] \ar@[green]@{-}[l]\\
*{\bullet} \ar@[red]@{-}[r] & *{\bullet} & *{\bullet}  \ar@[green]@{-}[l] \ar@[red]@<.15ex>@{-}[u]}}\ [3] \qquad
\vcenter{
\xymatrix@=2.9ex{
*{\bullet} \ar@[blue]@{-}[r]\ar@[green]@{-}[d]\ar@[red]@{-}[rd] & *{\bullet}\ar@[green]@{-}[d] \ar@[red]@{-}[rd] & \\
*{\bullet} \ar@[blue]@{-}[r]  \ar@[red]@{-}[rd] & *{\bullet} & *{\bullet}
\ar@<.15ex>@[green]@{-}[ld]\ar@<-.15ex>@[blue]@{-}[ld] \\
& *{\bullet} &}
} \ [3] \qquad
\vcenter{
\xymatrix@=2ex{
*{\bullet}\ar@[green]@{-}[rrr]\ar@[blue]@{-}[dd]\ar@[red]@{-}[rd] & & & *{\bullet} 
\ar@[blue]@{-}[dd]\ar@[red]@{-}[ld]\\
& *{\bullet}\ar@[blue]@{-}[r] & *{\bullet} & \\
*{\bullet}\ar@[red]@{-}[rrr] \ar@[green]@{-}[ru] & & & *{\bullet} 
\ar@[green]@{-}[lu]}
} \qquad
\vcenter{
\xymatrix@=2.9ex{
*{\bullet} \ar@[blue]@{-}[r]\ar@[green]@{-}[d]\ar@[red]@{-}[rrdd] & *{\bullet}   \ar@[green]@{-}[rd]\ar@[red]@{-}[dd] & \\
*{\bullet}\ar@[red]@{-}[rr]  \ar@[blue]@{-}[rd] & & *{\bullet}
\ar@[blue]@{-}[d] \\
& *{\bullet}\ar@[green]@{-}[r] & *{\bullet} }
}  \qquad
\vcenter{
\xymatrix@=2.9ex{
*{\bullet} \ar@<.15ex>@[blue]@{-}[r]\ar@[green]@{-}[d]\ar@<-.15ex>@[red]@{-}[r] & *{\bullet}   \ar@[green]@{-}[rd] & \\
*{\bullet}\ar@<.15ex>@[red]@{-}[rd]  \ar@<-.15ex>@[blue]@{-}[rd] & & *{\bullet}\ar@<-.15ex>@[red]@{-}[d]
\ar@[blue]@<.15ex>@{-}[d] \\
& *{\bullet}\ar@[green]@{-}[r] & *{\bullet} }
}   \ [3]
\end{equation}
Here, [3] means that by permuting the colors we get 3 pairwise nonisomorphic colored graphs. For $d=2e$ vertices, the number of connected trivalent colored graphs is given in the following table:
$$
\begin{array}{|c||c|c|c|c|c|c|c|c|c|c|}
\hline
d & 2 & 4 & 6 & 8 & 10 & 12 & 14 & 16 & 18 & 20\\ \hline
\#& 1 &4 & 11 & 60 & 318 & 2806 & 29359 & 396196 & 6231794 & 112137138\\ \hline\end{array}$$
See the Online Encyclopedia of Integer Sequences (\cite{OEIS}), sequence A002831.

\begin{example}
Consider the tensors  ${\mathcal T}_1,{\mathcal T}_2\in \R^{n^2\times n^2\times n^2}$ given by
\begin{eqnarray}
{\mathcal T}_1&=&\frac{1}{n}\sum_{i=1}^{n^2} \e_i\otimes \e_i\otimes \e_i\qquad\mbox{ and}\\
{\mathcal T}_2&=&\frac{1}{n\sqrt{n}}\sum_{i=0}^{n-1}\sum_{j=0}^{n-1}\sum_{k=0}^{n-1}\e_{ni+j+1}\otimes \e_{nj+k+1}\otimes \e_{nk+i+1}.
\end{eqnarray}
Any flattening of ${\mathcal T}_1$ and ${\mathcal T}_2$ is an $n^2\times n^4$ matrix whose singular values are equal to $\frac{1}{n}$ with multiplicity $n^2$. If every vertex in a diagram is adjacent to a double or triple edge, then the corresponding tensor invariant cannot distinguish ${\mathcal T}_1$ from ${\mathcal T}_2$. In the table below, we see that only the tetrahedron diagram can distinguish ${\mathcal T}_1$ and ${\mathcal T}_2$. This invariant captures information from the tensor that cannot be seen in any flattening.

$$
\begin{array}{|c||c|c|c|c|c|} \hline
& \Plankton & \RedFrame & \GreenFrame & \BlueFrame &\Tetrahedron\\ \hline\hline
{\mathcal T}_1 & 1 & n^{-2} & n^{-2} & n^{-2} & n^{-2} \\ \hline
{\mathcal T}_2 & 1 & n^{-2} & n^{-2} & n^{-2} & 1 \\ \hline
\end{array}
$$
\end{example}

\subsection{Complexity of Tensor Invariants}
A polynomial tensor invariant corresponding to a colored Brauer diagram can be computed
from subdiagrams. 

\begin{example} To compute

\begin{equation}\label{eq:RedFrame}
\RedFrame
\end{equation}
we could first compute the partial colored Brauer diagram
\begin{equation}
\vcenter{\xymatrix@=2.9ex{\\
*{\bullet}\ar@[red]@{-}[u]
\ar@<.15ex>@[blue]@{-}[d]
\ar@<-.15ex>@[green]@{-}[d]\\
*{\bullet} \ar@[red]@{-}[d]\\
{} }}.
\end{equation}
This partial diagram corresponds to a (symmetric) tensor in $R\otimes R$.
If ${\mathcal T}=(t_{ijk})$ then this diagram corresponds to a $p\times p$ matrix $A=(a_{ij})$
where $a_{ij}=\sum_{k=1}^q\sum_{\ell=1}^r t_{ik\ell}t_{jk\ell}$.
In practice, one can compute $A$ by first flattening ${\mathcal T}$ to a $p\times (qr)$ matrix $B$
and using $A=BB^t$,  where $B^t$ is the transpose of $B$. The space complexity of this operation is $O(pqr+p^2)$ (we just have to store the tensor ${\mathcal T}$ and the matrix $A$) and the time complexity is $O(p^2qr)$,
because for each pair $(i,j)$ with $1\leq i,j\leq p,$ we have to do $O(qr)$ multiplications and additions. Finally, we compute the invariant as an inner product:
\begin{equation}\label{eq:RedFrame2}
\RedFrame\ =\ \vcenter{\xymatrix@=2.9ex{\\
*{\bullet}\ar@[red]@{-}[u]
\ar@<.15ex>@[blue]@{-}[d]
\ar@<-.15ex>@[green]@{-}[d]\\
*{\bullet} \ar@[red]@{-}[d]\\
{} } }
\ \cdot\ 
\vcenter{\xymatrix@=2.9ex{\\
*{\bullet}\ar@[red]@{-}[u]
\ar@<.15ex>@[blue]@{-}[d]
\ar@<-.15ex>@[green]@{-}[d]\\
*{\bullet} \ar@[red]@{-}[d]\\
{} } }.
\end{equation}
The space complexity of this step is $O(p^2)$ and the time complexity is $O(p^2)$.
We conclude that the space complexity of computing (\ref{eq:RedFrame}) is $O(pqr+p^2)$
and the time complexity is $O(p^2qr)$. The theoretical time complexity bounds could be improved
if we use fast matrix multiplication (such as Strassen's algorithm).
\end{example}

\begin{example}
The invariant
\begin{equation}\label{eq:Tetrahedron}
\Tetrahedron
\end{equation}
is more difficult to compute. We first compute the $p\times p\times q\times q$ tensor ${\mathcal U}$
corresponding to the diagram
\begin{equation}
\vcenter{\xymatrix@=2.9ex{
 & *{\bullet} \ar@[red]@{-}[l]  \ar@[green]@{-}[r]\ar@[blue]@{-}[d] & \\
  & *{\bullet} \ar@[red]@{-}[l]  \ar@[green]@{-}[r] & }
  }.
\end{equation}
The space complexity of this computation is $O(p^2q^2+pqr)$ and the time complexity
is $O(p^2q^2r)$. Finally we compute
\begin{equation}
   \Tetrahedron= \vcenter{\xymatrix@=2.9ex{
 & *{\bullet} \ar@[red]@{-}[l]  \ar@[green]@{-}[r]\ar@[blue]@{-}[d] & \\
  & *{\bullet} \ar@[red]@{-}[l]  \ar@[green]@{-}[r] & }
  }\ \cdot\ 
  \vcenter{\xymatrix@=2.9ex{
 & *{\bullet} \ar@[red]@{-}[l]  \ar@[green]@{-}[rd]\ar@[blue]@{-}[d] & \\
  & *{\bullet} \ar@[red]@{-}[l]  \ar@[green]@{-}[ru] & }
  }.
\end{equation}
An explicit formula for this tensor invariant is given by 
\begin{equation}
\sum_{i=1}^p\sum_{j=1}^p\sum_{k=1}^q\sum_{\ell=1}^q {\mathcal U}_{ijk\ell}{\mathcal U}_{ij\ell k}.
\end{equation}
The space complexity of this step is $O(p^2q^2)$ and the time complexity is $O(p^2q^2)$ as well. Combining the two steps, we see that the time complexity of computing the tetrahedron invariant (\ref{eq:Tetrahedron}) is $O(p^2q^2r)$ and space complexity $O(p^2q^2+pqr)$. This is the approach we would use if $p\leq q\leq r$.
If $p\geq q\geq r$
then a more efficient algorithm is obtained by switching red and blue.
In that case we get time   $O(pq^2r^2)$.
\end{example}
\begin{example}
To compute
\begin{equation}
\vcenter{
\xymatrix@=2ex{
*{\bullet}\ar@[green]@{-}[rrr]\ar@[blue]@{-}[dd]\ar@[red]@{-}[rd] & & & *{\bullet} 
\ar@[blue]@{-}[dd]\ar@[red]@{-}[ld]\\
& *{\bullet}\ar@[blue]@{-}[r] & *{\bullet} & \\
*{\bullet}\ar@[red]@{-}[rrr] \ar@[green]@{-}[ru] & & & *{\bullet} 
\ar@[green]@{-}[lu]}
}
\end{equation}
we first compute
\begin{equation}
\vcenter{
\xymatrix@=2ex{
*{\bullet}\ar@[green]@{-}[rr]\ar@[blue]@{-}[dd]\ar@[red]@{-}[rd] & & \\
& *{\bullet}\ar@[blue]@{-}[r] &  \\
*{\bullet}\ar@[red]@{-}[rr] \ar@[green]@{-}[ru] & & }
}
\end{equation}
by contracting the $p\times p\times q\times q$ tensor ${\mathcal U}$ with ${\mathcal T}$.
This step requires $O(p^2q^2+pqr)$ in memory and $O(p^2q^2r)$ in time.
From this we compute
\begin{equation}
\vcenter{
\xymatrix@=2ex{
*{\bullet}\ar@[green]@{-}[rrr]\ar@[blue]@{-}[dd]\ar@[red]@{-}[rd] & & & *{\bullet} 
\ar@[blue]@{-}[dd]\ar@[red]@{-}[ld]\\
& *{\bullet}\ar@[blue]@{-}[r] & *{\bullet} & \\
*{\bullet}\ar@[red]@{-}[rrr] \ar@[green]@{-}[ru] & & & *{\bullet} 
\ar@[green]@{-}[lu]}
}\ =\ \vcenter{
\xymatrix@=2ex{
*{\bullet}\ar@[green]@{-}[rr]\ar@[blue]@{-}[dd]\ar@[red]@{-}[rd] & & \\
& *{\bullet}\ar@[blue]@{-}[r] &  \\
*{\bullet}\ar@[red]@{-}[rr] \ar@[green]@{-}[ru] & & }
}\ \cdot\ \vcenter{
\xymatrix@=2ex{
*{\bullet}\ar@[green]@{-}[rr]\ar@[blue]@{-}[dd]\ar@[red]@{-}[rd] & & \\
& *{\bullet}\ar@[blue]@{-}[r] &  \\
*{\bullet}\ar@[red]@{-}[rr] \ar@[green]@{-}[ru] & & }
}
.\end{equation}
\end{example}

\begin{example}
To compute the below diagram
\begin{equation}
\vcenter{
\xymatrix@=2.9ex{
*{\bullet} \ar@[blue]@{-}[r]\ar@[green]@{-}[d]\ar@[red]@{-}[rrdd] & *{\bullet}   \ar@[green]@{-}[rd]\ar@[red]@{-}[dd] & \\
*{\bullet}\ar@[red]@{-}[rr]  \ar@[blue]@{-}[rd] & & *{\bullet}
\ar@[blue]@{-}[d] \\
& *{\bullet}\ar@[green]@{-}[r] & *{\bullet} }
} 
\end{equation}
we can first compute
\begin{equation}
\vcenter{
\xymatrix@=2.9ex{
& *{\bullet}\ar@[green]@{-}[l]\ar@[blue]@{-}[d] \ar@[red]@{-}[r] & \\
& *{\bullet}\ar@[red]@{-}[r]\ar@[green]@{-}[d] &\\
& *{\bullet}\ar@[blue]@{-}[l]\ar@[red]@{-}[r]
 &}
}
\end{equation}
which costs $O(p^3qr)$ in memory and $O(p^3q^2r)$ in time, and then we have
\begin{equation}
\vcenter{
\xymatrix@=2.9ex{
*{\bullet} \ar@[blue]@{-}[r]\ar@[green]@{-}[d]\ar@[red]@{-}[rrdd] & *{\bullet}   \ar@[green]@{-}[rd]\ar@[red]@{-}[dd] & \\
*{\bullet}\ar@[red]@{-}[rr]  \ar@[blue]@{-}[rd] & & *{\bullet}
\ar@[blue]@{-}[d] \\
& *{\bullet}\ar@[green]@{-}[r] & *{\bullet} }
}\ =\ 
\vcenter{
\xymatrix@=2.9ex{
& *{\bullet}\ar@[green]@{-}[l]\ar@[blue]@{-}[d] \ar@[red]@{-}[r] & \\
& *{\bullet}\ar@[red]@{-}[r]\ar@[green]@{-}[d] &\\
& *{\bullet}\ar@[blue]@{-}[l]\ar@[red]@{-}[r]
 &}
} \ \cdot\ 
\vcenter{
\xymatrix@=2.9ex{
& *{\bullet}\ar@[green]@{-}[l]\ar@[blue]@{-}[d] \ar@[red]@{-}[rdd] & \\
& *{\bullet}\ar@[red]@{-}[r]\ar@[green]@{-}[d] &\\
& *{\bullet}\ar@[blue]@{-}[l]\ar@[red]@{-}[ruu]
 &  }  

}
.
\end{equation}
\end{example}
It becomes clear that the invariants that correspond to large diagrams can be hard to compute
because of memory and time limitations. Some tensor invariants require large tensors in
intermediate steps of the computation. There is a method to improve the memory and
time requirements with some loss of the accuracy of the result. One can use the Higher Order Singular Value
Decomposition (HOSVD) to reduce a $p\times q\times r$ tensor $\mathcal{T}$ to a core tensor ${\mathcal T}'$ of size $p'\times q'\times r'$ where $p'\leq p$,
$q'\leq q$ and $r'\leq r$.
The HOSVD is a generalization of the singular value decomposition of a matrix, see~\cite{Lathauwer}. If $r>pq$ then the $p\times q\times r$ tensor can be reduced to a $p\times q\times pq$ tensor using HOSVD without any loss at all. HOSVD is a special case of Tucker decomposition \cite{Tucker}. Details of these decomposition methods are beyond the scope of this paper. 

\section{Approximations of the Spectral Norm}\label{sec4}
\subsection{The spectral norm}\label{sec4.1}
The spectral norm $\|{\mathcal T}\|_{\sigma}$ of a tensor ${\mathcal T}\in V=R\otimes G\otimes B$ is defined by
$\|{\mathcal T}\|_\sigma:=\max\{|{\mathcal T}\cdot (x\otimes y\otimes z)|\ \mid  \|x\|=\|y\|=\|z\|=1\}$.
We can view ${\mathcal T}$ as an $\ell_{\infty}$ norm on the product of unit spheres $S^{p-1}\times S^{q-1}\times S^{r-1}$. The $\ell_\infty$ norm is a limit of $\ell_d$-norms where $d\to \infty$.
We have $\|{\mathcal T}\|_\sigma:=\lim_{d\to\infty}\|{\mathcal T}\|_{\overline{\sigma},d}$
where
\begin{equation}
\|{\mathcal T}\|_{\overline{\sigma},d}:=
  \left( \int_{S^{p-1}\times S^{q-1}\times S^{r-1}} |{\mathcal T}\cdot (x\otimes y\otimes z)|^d\,d\mu\right)^{1/d}.
\end{equation}
Suppose that $d=2e$ is even. We have
$|{\mathcal T}\cdot (x\otimes y\otimes z)|^d={\mathcal T}^{\otimes d}\cdot (x\otimes y\otimes z)^{\otimes d}$
 and
 \begin{equation}
  \int_{S^{p-1}\times S^{q-1}\times S^{r-1}} |{\mathcal T}\cdot (x\otimes y\otimes z)|^d\,d\mu={\mathcal T}^{\otimes d}\cdot
   \int_{S^{p-1}\times S^{q-1}\times S^{r-1}} (x\otimes y\otimes z)^{\otimes d}\,d\mu.
 \end{equation}
Up to permutation of the tensor factors, we have the following equality
\begin{multline}
  \int_{S^{p-1}\times S^{q-1}\times S^{r-1}} (x\otimes y\otimes z)^{\otimes d}\,d\mu=  \int_{S^{p-1}\times S^{q-1}\times S^{r-1}} (x^{\otimes d}\otimes y^{\otimes d}\otimes z^{\otimes d})\,d\mu=\\=
  \left(\int_{S^{p-1}} x^{\otimes d}\,d\mu\right) \otimes \left( \int_{S^{q-1}} y^{\otimes d}\,d\mu\right) \otimes \left(\int_{S^{r-1}} z^{\otimes d}\,d\mu\right).
  \end{multline}
  We will normalize the norm $\|\cdot\|_{\overline{\sigma},d}$ so that the value of simple tensors of unit length is equal to $1$. So we define a norm
  $\|\cdot\|_{\sigma,d}$ by 
  $$\|{\mathcal T}\|_{\sigma,d}=\frac{\|{\mathcal T}\|_{\overline{\sigma},d}}{\|x\otimes y\otimes z\|_{\overline{\sigma},d}},
  $$
  where $x,y,z$ are unit vectors. We have $\lim_{d\to\infty}\|{\mathcal T}\|_{\sigma,d}=\|{\mathcal T}\|_{\sigma}$.
 We will compute $\|{\mathcal T}\|_{\sigma,d}$ for $d=2$ and $d=4$. 
  
  For any even $d$, we let ${\mathcal S}_{R,d}\in R^{\otimes d}$ be the sum of all red Brauer diagrams on $d$ vertices. For example,
  \begin{equation}
{\mathcal S}_{R,4}=\IIred+\Xred+\Zred .
\end{equation}
      Similarly ${\mathcal S}_{G,d}$ and ${\mathcal S}_{B,d}$ are the respective sums of all green and blue Brauer diagrams on $d$ nodes.
  \subsection{The approximation for \boldmath{$d=2$}}\label{sec4.2}
  \begin{proposition}
  The norm $\|\cdot\|_{\sigma,2}$ is equal to the Euclidean (or Frobenius) norm $\|\cdot\|$.
  \end{proposition}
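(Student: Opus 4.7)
The plan is to compute $\|\mathcal{T}\|_{\overline{\sigma},2}$ directly using the integration formula from Proposition~\ref{integral} and then divide by the normalization factor. The key identity is that for any $d=2e$,
$$|\mathcal{T}\cdot(x\otimes y\otimes z)|^{d}=\mathcal{T}^{\otimes d}\cdot(x\otimes y\otimes z)^{\otimes d},$$
so passing the integral through the inner product reduces the problem to evaluating $\int (x\otimes y\otimes z)^{\otimes d}\,d\mu$ in $V^{\otimes d}$.

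First I would apply the isomorphism $\psi$ and Fubini to rewrite
$$\int_{S^{p-1}\times S^{q-1}\times S^{r-1}}(x\otimes y\otimes z)^{\otimes 2}\,d\mu\;=\;\psi\!\left(\int_{S^{p-1}} x^{\otimes 2}\,d\mu\;\otimes\;\int_{S^{q-1}} y^{\otimes 2}\,d\mu\;\otimes\;\int_{S^{r-1}} z^{\otimes 2}\,d\mu\right).$$
By Proposition~\ref{integral} with $e=1$, each factor equals $\frac{1}{n}\mathcal{S}_{n,2}$, and since there is exactly one Brauer diagram on two vertices, $\mathcal{S}_{R,2}$, $\mathcal{S}_{G,2}$, $\mathcal{S}_{B,2}$ are single (monochromatic) edges. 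Thus the whole integral equals $\frac{1}{pqr}\,\mathcal{T}_{\mathbf{P}}$ where $\mathbf{P}$ is the plankton diagram $\Plankton$.

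Next I would evaluate $\mathcal{T}^{\otimes 2}\cdot \mathcal{T}_{\mathbf{P}}=\mathcal{P}_{\mathbf{P}}(\mathcal{T})$. Reading off the definition in equation~\eqref{eq:pd_def} (or directly from the diagram: all three colors contract the same pair of indices), we get $\mathcal{P}_{\mathbf{P}}(\mathcal{T})=\sum_{i,j,k}t_{ijk}^{2}=\|\mathcal{T}\|^{2}$. Therefore
$$\|\mathcal{T}\|_{\overline{\sigma},2}^{2}=\frac{1}{pqr}\,\|\mathcal{T}\|^{2}.$$

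Finally I would compute the normalization factor. For unit vectors $x,y,z$ the tensor $x\otimes y\otimes z$ has Frobenius norm $\|x\|\|y\|\|z\|=1$, so the same computation yields $\|x\otimes y\otimes z\|_{\overline{\sigma},2}=1/\sqrt{pqr}$. Dividing gives
$$\|\mathcal{T}\|_{\sigma,2}=\frac{\|\mathcal{T}\|_{\overline{\sigma},2}}{\|x\otimes y\otimes z\|_{\overline{\sigma},2}}=\|\mathcal{T}\|,$$
which is the claim. There is no real obstacle here; the only point requiring a bit of care is tracking the tensor factors through $\psi$ so that the three monochromatic edges assemble into the plankton diagram, but once that bookkeeping is done the identification of $\mathcal{P}_{\mathbf{P}}$ with the Frobenius norm squared is immediate.
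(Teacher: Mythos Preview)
Your proposal is correct and follows essentially the same route as the paper: apply Proposition~\ref{integral} with $e=1$ to each sphere factor, assemble the three single edges into the plankton diagram so that the integral is $\frac{1}{pqr}\mathcal{T}_{\mathbf{P}}$, recognize $\mathcal{P}_{\mathbf{P}}(\mathcal{T})=\|\mathcal{T}\|^2$, and then divide out the normalization. The only cosmetic difference is that you spell out the normalization step explicitly, whereas the paper simply states $\|\mathcal{T}\|_{\overline{\sigma},2}=\frac{1}{\sqrt{pqr}}\|\mathcal{T}\|$ and concludes.
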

  \begin{proof}
   If we let $e=1$, then it follows from Proposition \ref{integral} that
  \begin{equation}
  \int_{S^{p-1}}x\otimes x\,d\mu={\textstyle \frac{1}{p}}{\mathcal S}_{R,2},\quad  \int_{S^{q-1}}y\otimes y\,d\mu={\textstyle\frac{1}{q}}{\mathcal S}_{G,2},
 \mbox{ and }  \int_{S^{r-1}}z\otimes z\,d\mu={\textstyle\frac{1}{r}}{\mathcal S}_{B,2}.
  \end{equation}
  Therefore, we get
  \begin{equation}
    \int_{S^{p-1}\times S^{q-1}\times S^{r-1}} (x\otimes y\otimes z)^{\otimes 2}\,d\mu={\textstyle\frac{1}{pqr}}{\mathcal S}_{R,2}\otimes {\mathcal S}_{G,2}\otimes {\mathcal S}_{B,2}.    
    \end{equation}
  In diagrams, we get
  $$
    \int_{S^{p-1}\times S^{q-1}\times S^{r-1}} (x\otimes y\otimes z)^{\otimes 2}\,d\mu={\textstyle\frac{1}{pqr}}\ \ \Plankton .
  $$
  So we have
  $$
  \|{\mathcal T}\|_{\overline{\sigma},2}^2=(\mathcal{T}\otimes {\mathcal T})\cdot \left( {\textstyle\frac{1}{pqr}}\ \Plankton\right)={\textstyle \frac{1}{pqr}}{\mathcal T}\cdot {\mathcal T}={\textstyle \frac{1}{pqr}}\|{\mathcal T}\|^2.
  $$
  and $\|{\mathcal T}\|_{\overline{\sigma},2}=\frac{1}{\sqrt{pqr}}\|{\mathcal T}\|$.
  It follows that $\|{\mathcal T}\|_{\sigma,2}$ is equal to the Euclidean norm $\|\cdot\|$.
  
  \end{proof}
  
  \subsection{The approximation for \boldmath{$d=4$}}\label{sec4.3}
  \begin{theorem}\label{theo:approxd4}
  We have that $\|{\mathcal T}\|_{\sigma,4}^4={\mathcal P}_{\bf D}({\mathcal T})$ where 
\begin{equation}\label{eq:sigma4}
{\bf D}=
\frac{3\  \vcenter{\xymatrix@=2.9ex{
*{\bullet}\ar@<-.2ex>@[red]@{-}[d] \ar@<.2ex>@[green]@{-}[d] \ar@[blue]@{-}[d] & *{\bullet}\ar@<.2ex>@[red]@{-}[d] \ar@<-.2ex>@[green]@{-}[d]  \ar@[blue]@{-}[d] \\
*{\bullet} & *{\bullet}}} +
6\ \RedFrame+6\ \GreenFrame+6\ \BlueFrame +6\ \Tetrahedron}{27}
\end{equation}

and ${\mathcal P}_{\bf D}$ is defined as in \eqref{eq:pd_def}.
  \end{theorem}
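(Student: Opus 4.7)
The plan is to unwind the definition of $\|\cdot\|_{\sigma,4}$ in terms of integrals over spheres, apply Proposition~\ref{integral} to each of the three factors $R,G,B$, and then interpret the result via colored Brauer diagrams.

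\textbf{Step 1 (reduce to sphere integrals).} Since $|\mathcal{T}\cdot(x\otimes y\otimes z)|^{4}={\mathcal T}^{\otimes 4}\cdot (x\otimes y\otimes z)^{\otimes 4}$ and, after permuting tensor factors, $(x\otimes y\otimes z)^{\otimes 4}=x^{\otimes 4}\otimes y^{\otimes 4}\otimes z^{\otimes 4}$, Fubini gives
\begin{equation}
\|{\mathcal T}\|_{\overline{\sigma},4}^{4}={\mathcal T}^{\otimes 4}\cdot \psi\!\left(\textstyle\int_{S^{p-1}} x^{\otimes 4}\,d\mu\,\otimes\,\int_{S^{q-1}} y^{\otimes 4}\,d\mu\,\otimes\,\int_{S^{r-1}} z^{\otimes 4}\,d\mu\right).
\end{equation}
By Proposition~\ref{integral} applied with $e=2$, each factor equals $\frac{1}{n(n+2)}{\mathcal S}_{*,4}$ for the appropriate $n\in\{p,q,r\}$, so the right hand side becomes
$\frac{1}{p(p+2)q(q+2)r(r+2)}\,{\mathcal T}^{\otimes 4}\cdot \psi({\mathcal S}_{R,4}\otimes {\mathcal S}_{G,4}\otimes {\mathcal S}_{B,4})$.

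\textbf{Step 2 (compute the normalization).} For any unit vector $v_{0}\in\R^{n}$ and any Brauer diagram ${\bf D}$ on $4$ vertices, $v_{0}^{\otimes 4}\cdot {\mathcal T}_{\bf D}=(v_{0}\cdot v_{0})^{2}=1$, so $v_{0}^{\otimes 4}\cdot{\mathcal S}_{n,4}=3$. Specializing Step 1 to a simple unit tensor ${\mathcal T}=x_{0}\otimes y_{0}\otimes z_{0}$ yields $\|x_{0}\otimes y_{0}\otimes z_{0}\|_{\overline{\sigma},4}^{4}=\frac{27}{p(p+2)q(q+2)r(r+2)}$. Dividing, the prefactors cancel and
\begin{equation}
\|{\mathcal T}\|_{\sigma,4}^{4}=\tfrac{1}{27}\,{\mathcal T}^{\otimes 4}\cdot \psi({\mathcal S}_{R,4}\otimes {\mathcal S}_{G,4}\otimes {\mathcal S}_{B,4}),
\end{equation}
and by definition of $\mathcal{P}_{\bf D}$ this equals $\mathcal{P}_{\bf D}({\mathcal T})$ where ${\bf D}=\frac{1}{27}\psi({\mathcal S}_{R,4}\otimes {\mathcal S}_{G,4}\otimes {\mathcal S}_{B,4})$.

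\textbf{Step 3 (identify the 27 colored diagrams).} It remains to show that the sum of the $3\cdot 3\cdot 3=27$ colored Brauer diagrams obtained by choosing one monochromatic matching (type II, X, or Z) for each of the three colors matches the numerator in~(\ref{eq:sigma4}). I partition the $27$ triples $(M_{R},M_{G},M_{B})$ by how many of the three matchings coincide: (a) all three matchings equal, giving $3$ triples, each producing the ``double plankton'' diagram with all three colors doubled along both vertical edges (the first summand in~(\ref{eq:sigma4})); (b) exactly two equal, giving $3\cdot 3\cdot 2=18$ triples, split evenly by which color is the odd one out into three groups of~$6$, each group producing a RedFrame, GreenFrame, or BlueFrame (the color of the ``odd'' matching naming the frame); (c) all three matchings pairwise distinct, giving $3!=6$ triples, each producing the tetrahedron $K_{4}$ with its canonical proper $3$-edge-coloring. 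Since ${\mathcal P}_{\bf D}$ depends only on the unlabeled isomorphism type, these groupings give exactly the coefficients $3,6,6,6,6$ appearing in~(\ref{eq:sigma4}).

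The main obstacle is the case analysis in Step~3, namely verifying that the three ``equal matching'' cases all yield the same (isomorphism type of) double-plankton diagram independent of whether we picked II, X, or Z, and similarly that the two-equal and all-distinct classes give, respectively, the Frame and Tetrahedron diagrams up to isomorphism; this is a finite check that can be carried out by inspecting all $27$ overlays, using that the three matchings on $4$ labeled vertices are permuted transitively by $S_{4}$.
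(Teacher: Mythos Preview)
Your proposal is correct and follows essentially the same route as the paper: reduce $\|\mathcal{T}\|_{\overline\sigma,4}^4$ to an integral via Proposition~\ref{integral}, then expand $\psi(\mathcal{S}_{R,4}\otimes\mathcal{S}_{G,4}\otimes\mathcal{S}_{B,4})$ as a sum of $27$ colored Brauer diagrams and group by isomorphism type. The only cosmetic differences are that the paper carries out the $27$-term expansion in two stages (first ${\mathcal S}_{R,4}\otimes{\mathcal S}_{G,4}$, then tensor with ${\mathcal S}_{B,4}$) with explicit pictures rather than your one-shot case split on how many of $(M_R,M_G,M_B)$ coincide, and that you spell out the normalization constant $27$ via evaluation on a simple unit tensor whereas the paper leaves this implicit in the definition of $\|\cdot\|_{\sigma,4}$.
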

\begin{proof}
If we employ the Proposition \ref{integral} for $e=2$, then we get
  \begin{multline}
  \int_{S^{p-1}\times S^{q-1}\times S^{r-1}} (x\otimes y\otimes z)^{\otimes 4}\,d\mu=\\=
  \left(\int_{S^{p-1}} x^{\otimes 4}\,d\mu\right) \otimes \left( \int_{S^{q-1}} y^{\otimes 4}\,d\mu\right) \otimes \left(\int_{S^{r-1}} z^{\otimes 4}\,d\mu\right)=
  {\textstyle \frac{1}{p(p+2)q(q+2)r(r+2)}}{\mathcal S}_{R,4}\otimes {\mathcal S}_{G,4}\otimes {\mathcal S}_{B,4}.
  \end{multline}
  We calculate
  \begin{multline}
  {\mathcal S}_{R,4}\otimes {\mathcal S}_{G,4}=\left(\IIred+\Xred+\Zred\right)\otimes \left(\IIgreen+\Xgreen+\Zgreen\right)=\\ \\
  \begin{array}{cccccc}
\vcenter{\xymatrix@=2.9ex{
*{\bullet}\ar@<-.2ex>@[red]@{-}[d] \ar@<.2ex>@[green]@{-}[d]  & *{\bullet}\ar@<.2ex>@[red]@{-}[d] \ar@<-.2ex>@[green]@{-}[d] \\
*{\bullet} & *{\bullet}}}
& + & 
\vcenter{\xymatrix@=2.9ex{
*{\bullet}\ar@[red]@{-}[rd]  \ar@[green]@{-}[d] & *{\bullet}\ar@[red]@{-}[ld] \ar@[green]@{-}[d] \\
*{\bullet} & *{\bullet}}}
& + & 
\vcenter{\xymatrix@=2.9ex{
*{\bullet}\ar@[red]@{-}[r] \ar@[green]@{-}[d]  & *{\bullet} \ar@[green]@{-}[d]  \\
*{\bullet}\ar@[red]@{-}[r]  & *{\bullet}}}
& + \\ \\
\vcenter{\xymatrix@=2.9ex{
*{\bullet}\ar@[red]@{-}[d] \ar@[green]@{-}[rd] & *{\bullet}\ar@[red]@{-}[d] \ar@[green]@{-}[ld]\\
*{\bullet} & *{\bullet}}}
& + & 
\vcenter{\xymatrix@=2.9ex{
*{\bullet}\ar@<.2ex>@[red]@{-}[rd] \ar@<-.2ex>@[green]@{-}[rd]  & *{\bullet}\ar@<-.2ex>@[red]@{-}[ld]\ar@<.2ex>@[green]@{-}[ld]\\
*{\bullet} & *{\bullet}}}
& + & 
\vcenter{\xymatrix@=2.9ex{
*{\bullet}\ar@[red]@{-}[r]  \ar@[green]@{-}[rd]  & *{\bullet}\ar@[green]@{-}[ld] \\
*{\bullet}\ar@[red]@{-}[r]  & *{\bullet}}}
& + \\ \\
\vcenter{\xymatrix@=2.9ex{
*{\bullet}\ar@[red]@{-}[d] \ar@[green]@{-}[r] & *{\bullet}\ar@[red]@{-}[d]\\
*{\bullet} \ar@[green]@{-}[r]  & *{\bullet}}}
& + & 
\vcenter{\xymatrix@=2.9ex{
*{\bullet}\ar@[red]@{-}[rd] \ar@[green]@{-}[r]  & *{\bullet}\ar@[red]@{-}[ld]\\
*{\bullet} \ar@[green]@{-}[r]  & *{\bullet}}}
& + & 
\vcenter{\xymatrix@=2.9ex{
*{\bullet}\ar@<.2ex>@[red]@{-}[r]  \ar@<-.2ex>@[green]@{-}[r]  & *{\bullet} \\ 
*{\bullet}\ar@<-.2ex>@[red]@{-}[r] \ar@<.2ex>@[green]@{-}[r]  & *{\bullet}}}
& 
\end{array}\quad=3\ \vcenter{\xymatrix@=2.9ex{
*{\bullet}\ar@<-.2ex>@[red]@{-}[d] \ar@<.2ex>@[green]@{-}[d]  & *{\bullet}\ar@<.2ex>@[red]@{-}[d] \ar@<-.2ex>@[green]@{-}[d] \\
*{\bullet} & *{\bullet}}}+
6\ \vcenter{\xymatrix@=2.9ex{
*{\bullet}\ar@[red]@{-}[r] \ar@[green]@{-}[d]  & *{\bullet} \ar@[green]@{-}[d]  \\
*{\bullet}\ar@[red]@{-}[r]  & *{\bullet}}}.
  \end{multline}
 In this calculation, we have omitted the labeling of the vertices. The last equality is only true if we symmetrize the right-hand side over all 24 permutations on the 4 vertices. 
   \begin{multline}
  {\mathcal S}_{R,4}\otimes {\mathcal S}_{G,4}\otimes {\mathcal S}_{B,4}=
  \left(3\ \vcenter{\xymatrix@=2.9ex{
*{\bullet}\ar@<-.2ex>@[red]@{-}[d] \ar@<.2ex>@[green]@{-}[d]  & *{\bullet}\ar@<.2ex>@[red]@{-}[d] \ar@<-.2ex>@[green]@{-}[d] \\
*{\bullet} & *{\bullet}}}+
6\ \vcenter{\xymatrix@=2.9ex{
*{\bullet}\ar@[red]@{-}[r] \ar@[green]@{-}[d]  & *{\bullet} \ar@[green]@{-}[d]  \\
*{\bullet}\ar@[red]@{-}[r]  & *{\bullet}}}\right)\otimes \left(\IIblue+\Xblue+\Zblue\right)=\\ \ \\
\begin{array}{cccccc}
3\ \vcenter{\xymatrix@=2.9ex{
*{\bullet}\ar@<-.2ex>@[red]@{-}[d] \ar@<.2ex>@[green]@{-}[d] \ar@[blue]@{-}[d] & *{\bullet}\ar@<.2ex>@[red]@{-}[d] \ar@<-.2ex>@[green]@{-}[d]  \ar@[blue]@{-}[d] \\
*{\bullet} & *{\bullet}}} 
& + &
3\ \vcenter{\xymatrix@=2.9ex{
*{\bullet}\ar@<-.2ex>@[red]@{-}[d] \ar@<.2ex>@[green]@{-}[d]\ar@[blue]@{-}[rd]  & *{\bullet}\ar@<.2ex>@[red]@{-}[d] \ar@<-.2ex>@[green]@{-}[d] \ar@[blue]@{-}[ld]  \\
*{\bullet} & *{\bullet}}}
& + &
3\ \vcenter{\xymatrix@=2.9ex{
*{\bullet}\ar@<-.2ex>@[red]@{-}[d] \ar@<.2ex>@[green]@{-}[d] \ar@[blue]@{-}[r] & *{\bullet}\ar@<.2ex>@[red]@{-}[d] \ar@<-.2ex>@[green]@{-}[d] \\
*{\bullet}  \ar@[blue]@{-}[r] & *{\bullet}}}
& +\\ \\
6\ \vcenter{\xymatrix@=2.9ex{
*{\bullet}\ar@[red]@{-}[r] \ar@<-.2ex>@[green]@{-}[d]  \ar@<.2ex>@[blue]@{-}[d]   & *{\bullet} \ar@<.2ex>@[green]@{-}[d]  \ar@<-.2ex>@[blue]@{-}[d]  \\
*{\bullet}\ar@[red]@{-}[r]  & *{\bullet}}}
& + &
6\ \vcenter{\xymatrix@=2.9ex{
*{\bullet}\ar@[red]@{-}[r] \ar@[green]@{-}[d] \ar@[blue]@{-}[rd]  & *{\bullet} \ar@[green]@{-}[d] \ar@[blue]@{-}[ld]  \\
*{\bullet}\ar@[red]@{-}[r]  & *{\bullet}}}
& + &
6\ \vcenter{\xymatrix@=2.9ex{
*{\bullet}\ar@<.2ex>@[red]@{-}[r] \ar@[green]@{-}[d]  \ar@<-.2ex>@[blue]@{-}[r] & *{\bullet} \ar@[green]@{-}[d]  \\
*{\bullet}\ar@<-.2ex>@[red]@{-}[r] \ar@<.2ex>@[blue]@{-}[r]  & *{\bullet}}}
&\end{array} =
3\  \vcenter{\xymatrix@=2.9ex{
*{\bullet}\ar@<-.2ex>@[red]@{-}[d] \ar@<.2ex>@[green]@{-}[d] \ar@[blue]@{-}[d] & *{\bullet}\ar@<.2ex>@[red]@{-}[d] \ar@<-.2ex>@[green]@{-}[d]  \ar@[blue]@{-}[d] \\
*{\bullet} & *{\bullet}}} +
6\ \RedFrame+6 \GreenFrame+6\ \BlueFrame +6\ \Tetrahedron .
\end{multline}
For this calculation, one should symmetrize the red-green diagrams over all 24 permutations. However, if we do not do this the result will not change
because the blue diagrams are symmetrized over all permutations. We conclude that $\|{\mathcal T}\|_{\sigma,4}^4={\mathcal P}_{\bf D}({\mathcal T})$ where 
\begin{equation}
{\bf D}=
\frac{3\  \vcenter{\xymatrix@=2.9ex{
*{\bullet}\ar@<-.2ex>@[red]@{-}[d] \ar@<.2ex>@[green]@{-}[d] \ar@[blue]@{-}[d] & *{\bullet}\ar@<.2ex>@[red]@{-}[d] \ar@<-.2ex>@[green]@{-}[d]  \ar@[blue]@{-}[d] \\
*{\bullet} & *{\bullet}}} +
6\ \RedFrame+6\ \GreenFrame+6\ \BlueFrame +6\ \Tetrahedron}{27}.
\end{equation}
\end{proof}
\subsection{Other approximations of the spectral norm}\label{sec4.4}
We say that a norm $\|\cdot\|_{\#}$ is a degree $d$ norm if $\|{\mathcal T}\|_{\#}^d$ is a polynomial function on ${\mathcal T}$ of degree $d$.
The norm $\|\cdot\|_{\sigma,d}$ is a norm of degree $d$. In particular, $\|\cdot\|_{\sigma,4}$ is a norm of degree $4$. In this section
we study other norms of degree $4$ that approximate the spectral norm.

Consider the degree 2 covariant ${\mathcal U}=V\to G^{\otimes 2}\otimes B^{\otimes 2}$ defined by
$$
{\mathcal U}={\mathcal U}({\mathcal T})=
 \vcenter{\xymatrix@=2.9ex{
*{\bullet}\ar@[green]@{-}[r] \ar@[red]@{-}[d] \ar@[blue]@{-}[rd]  &  \ar@[blue]@{-}[ld]  \\
*{\bullet}\ar@[green]@{-}[r]  &  }} + 
 \vcenter{\xymatrix@=2.9ex{
*{\bullet}\ar@<.2ex>@[green]@{-}[r] \ar@[red]@{-}[d] \ar@<-.2ex>@[blue]@{-}[r]  &   \\
*{\bullet}\ar@<-.2ex>@[green]@{-}[r]  &   \ar@<-.2ex>@[blue]@{-}[l] }} 
.$$
We have
\begin{equation}\label{eq:matrixdot}
0\leq 
 \vcenter{\xymatrix@=2.9ex{
*{\bullet}\ar@[green]@{-}[r] \ar@[red]@{-}[d] \ar@[blue]@{-}[rd]  &  \ar@[blue]@{-}[ld]  \\
*{\bullet}\ar@[green]@{-}[r]  &  }} \cdot  \vcenter{\xymatrix@=2.9ex{
*{\bullet}\ar@[green]@{-}[r] \ar@[red]@{-}[d] \ar@[blue]@{-}[rd]  &  \ar@[blue]@{-}[ld]  \\
*{\bullet}\ar@[green]@{-}[r]  &  }}=
 \vcenter{\xymatrix@=2.9ex{
*{\bullet}\ar@<.2ex>@[green]@{-}[r] \ar@[red]@{-}[d] \ar@<-.2ex>@[blue]@{-}[r]  &   \\
*{\bullet}\ar@<-.2ex>@[green]@{-}[r]  &   \ar@<-.2ex>@[blue]@{-}[l] }} 
\cdot
 \vcenter{\xymatrix@=2.9ex{
*{\bullet}\ar@<.2ex>@[green]@{-}[r] \ar@[red]@{-}[d] \ar@<-.2ex>@[blue]@{-}[r]  &   \\
*{\bullet}\ar@<-.2ex>@[green]@{-}[r]  &   \ar@<-.2ex>@[blue]@{-}[l] }} =
 \vcenter{\xymatrix@=2.9ex{
*{\bullet}\ar@<.2ex>@[green]@{-}[r] \ar@[red]@{-}[d] \ar@<-.2ex>@[blue]@{-}[r]  & *{\bullet}\ar@[red]@{-}[d]  \\
*{\bullet}\ar@<-.2ex>@[green]@{-}[r]  &  *{\bullet} \ar@<-.2ex>@[blue]@{-}[l] }} \qquad\mbox{and}
\end{equation}
\begin{equation}
 \vcenter{\xymatrix@=2.9ex{
*{\bullet}\ar@[green]@{-}[r] \ar@[red]@{-}[d] \ar@[blue]@{-}[rd]  &  \ar@[blue]@{-}[ld]  \\
*{\bullet}\ar@[green]@{-}[r]  &  }} \cdot 
 \vcenter{\xymatrix@=2.9ex{
*{\bullet}\ar@<.2ex>@[green]@{-}[r] \ar@[red]@{-}[d] \ar@<-.2ex>@[blue]@{-}[r]  &   \\
*{\bullet}\ar@<-.2ex>@[green]@{-}[r]  &   \ar@<-.2ex>@[blue]@{-}[l] }}  =
 \vcenter{\xymatrix@=2.9ex{
*{\bullet}\ar@[green]@{-}[r] \ar@[red]@{-}[d] \ar@[blue]@{-}[rd]  &  *{\bullet}  \ar@[blue]@{-}[ld] \ar@[red]@{-}[d] \\
*{\bullet}\ar@[green]@{-}[r]  &  *{\bullet} }}
\end{equation}
(in this calculation, the diagrams represent their evaluations on ${\mathcal T}\otimes{\mathcal T}\otimes {\mathcal T}\otimes{\mathcal T}).$ So we get
\begin{equation}\label{eq:ineq1}
 \vcenter{\xymatrix@=2.9ex{
*{\bullet}\ar@<.2ex>@[green]@{-}[r] \ar@[red]@{-}[d] \ar@<-.2ex>@[blue]@{-}[r]  & *{\bullet}\ar@[red]@{-}[d]  \\
*{\bullet}\ar@<-.2ex>@[green]@{-}[r]  &  *{\bullet} \ar@<-.2ex>@[blue]@{-}[l] }} +
 \vcenter{\xymatrix@=2.9ex{
*{\bullet}\ar@[green]@{-}[r] \ar@[red]@{-}[d] \ar@[blue]@{-}[rd]  &  *{\bullet}  \ar@[blue]@{-}[ld] \ar@[red]@{-}[d] \\
*{\bullet}\ar@[green]@{-}[r]  &  *{\bullet} }}={\textstyle \frac{1}{2}}({\mathcal U}\cdot {\mathcal U})\geq 0.
\end{equation}
Permuting the colors also gives
\begin{equation}\label{eq:ineq2}
 \vcenter{\xymatrix@=2.9ex{
*{\bullet}\ar@<.2ex>@[red]@{-}[r] \ar@[green]@{-}[d] \ar@<-.2ex>@[blue]@{-}[r]  & *{\bullet}\ar@[green]@{-}[d]  \\
*{\bullet}\ar@<-.2ex>@[red]@{-}[r]  &  *{\bullet} \ar@<-.2ex>@[blue]@{-}[l] }} +
 \vcenter{\xymatrix@=2.9ex{
*{\bullet}\ar@[green]@{-}[r] \ar@[red]@{-}[d] \ar@[blue]@{-}[rd]  &  *{\bullet}  \ar@[blue]@{-}[ld] \ar@[red]@{-}[d] \\
*{\bullet}\ar@[green]@{-}[r]  &  *{\bullet} }}\geq 0.
\end{equation}
It follows from (\ref{eq:matrixdot}) that
\begin{equation}\label{eq:ineq3}
 \vcenter{\xymatrix@=2.9ex{
*{\bullet}\ar@<.2ex>@[red]@{-}[r] \ar@[blue]@{-}[d] \ar@<-.2ex>@[green]@{-}[r]  & *{\bullet}\ar@[blue]@{-}[d]  \\
*{\bullet}\ar@<-.2ex>@[red]@{-}[r]  &  *{\bullet} \ar@<-.2ex>@[green]@{-}[l] }} \geq0.
\end{equation}
Adding (\ref{eq:ineq1}), (\ref{eq:ineq2}) and (\ref{eq:ineq3}) gives
\begin{equation}\label{eq:NormNonnegative}
 \vcenter{\xymatrix@=2.9ex{
*{\bullet}\ar@<.2ex>@[red]@{-}[r] \ar@[blue]@{-}[d] \ar@<-.2ex>@[green]@{-}[r]  & *{\bullet}\ar@[blue]@{-}[d]  \\
*{\bullet}\ar@<-.2ex>@[red]@{-}[r]  &  *{\bullet} \ar@<-.2ex>@[green]@{-}[l] }}+
 \vcenter{\xymatrix@=2.9ex{
*{\bullet}\ar@<.2ex>@[red]@{-}[r] \ar@[green]@{-}[d] \ar@<-.2ex>@[blue]@{-}[r]  & *{\bullet}\ar@[green]@{-}[d]  \\
*{\bullet}\ar@<-.2ex>@[red]@{-}[r]  &  *{\bullet} \ar@<-.2ex>@[blue]@{-}[l] }} +
 \vcenter{\xymatrix@=2.9ex{
*{\bullet}\ar@<.2ex>@[green]@{-}[r] \ar@[red]@{-}[d] \ar@<-.2ex>@[blue]@{-}[r]  & *{\bullet}\ar@[red]@{-}[d]  \\
*{\bullet}\ar@<-.2ex>@[green]@{-}[r]  &  *{\bullet} \ar@<-.2ex>@[blue]@{-}[l] }} +
2 \vcenter{\xymatrix@=2.9ex{
*{\bullet}\ar@[green]@{-}[r] \ar@[red]@{-}[d] \ar@[blue]@{-}[rd]  &  *{\bullet}  \ar@[blue]@{-}[ld] \ar@[red]@{-}[d] \\
*{\bullet}\ar@[green]@{-}[r]  &  *{\bullet} }}\geq 0.
\end{equation}
\begin{definition}\label{def:NewNorm}
We define
\begin{equation}\label{Tsharp}
\|{\mathcal T}\|_\#=\frac{1}{5^{1/4}}\left(\vcenter{\xymatrix@=2.9ex{
*{\bullet}\ar@<.2ex>@[red]@{-}[r] \ar@[blue]@{-}[d] \ar@<-.2ex>@[green]@{-}[r]  & *{\bullet}\ar@[blue]@{-}[d]  \\
*{\bullet}\ar@<-.2ex>@[red]@{-}[r]  &  *{\bullet} \ar@<-.2ex>@[green]@{-}[l] }}+
 \vcenter{\xymatrix@=2.9ex{
*{\bullet}\ar@<.2ex>@[red]@{-}[r] \ar@[green]@{-}[d] \ar@<-.2ex>@[blue]@{-}[r]  & *{\bullet}\ar@[green]@{-}[d]  \\
*{\bullet}\ar@<-.2ex>@[red]@{-}[r]  &  *{\bullet} \ar@<-.2ex>@[blue]@{-}[l] }} +
 \vcenter{\xymatrix@=2.9ex{
*{\bullet}\ar@<.2ex>@[green]@{-}[r] \ar@[red]@{-}[d] \ar@<-.2ex>@[blue]@{-}[r]  & *{\bullet}\ar@[red]@{-}[d]  \\
*{\bullet}\ar@<-.2ex>@[green]@{-}[r]  &  *{\bullet} \ar@<-.2ex>@[blue]@{-}[l] }} +
2 \vcenter{\xymatrix@=2.9ex{
*{\bullet}\ar@[green]@{-}[r] \ar@[red]@{-}[d] \ar@[blue]@{-}[rd]  &  *{\bullet}  \ar@[blue]@{-}[ld] \ar@[red]@{-}[d] \\
*{\bullet}\ar@[green]@{-}[r]  &  *{\bullet} }}\right)^{1/4}.
\end{equation}
\end{definition}
We will show that $\|{\mathcal T}\|_\#$ is a norm.
\begin{lemma}
Suppose that $f(x)=f(x_1,x_2,\dots,x_m)\in \R[x_1,\dots,x_m]$ is a homogeneous polynomial of degree $d>0$ with $f(x)>0$ 
for all nonzero $x\in \R^m$ and the Hessian matrix $(\frac{\partial^2 f}{\partial x_i\partial x_j})$
is positive semi-definite.  Then $\|x\|_\#:=f(x)^{1/d}$ is a norm on $\R^m$.
\end{lemma}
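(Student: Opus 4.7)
The plan is to verify the three defining properties of a norm for $\|x\|_{\#} = f(x)^{1/d}$ in turn, using the hypotheses on $f$.

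First I would observe that $d$ must be even. Since $f$ is homogeneous of degree $d$, we have $f(-x) = (-1)^d f(x)$; combined with $f(x) > 0$ for every nonzero $x$, this forces $(-1)^d = 1$, so $d = 2e$ for some positive integer $e$. In particular $f(x)^{1/d}$ is an unambiguous nonnegative real number. Positive definiteness is then immediate: $f(0) = 0$ by positive-degree homogeneity, and $f(x) > 0$ for $x \neq 0$ by assumption. For absolute homogeneity, for any $\lambda \in \R$,
\begin{equation}
\|\lambda x\|_{\#} = f(\lambda x)^{1/d} = \bigl(\lambda^d f(x)\bigr)^{1/d} = |\lambda|\, f(x)^{1/d} = |\lambda|\, \|x\|_{\#},
\end{equation}
using that $\lambda^d = |\lambda|^d$ because $d$ is even.

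The main obstacle is the triangle inequality $\|x+y\|_{\#} \leq \|x\|_{\#} + \|y\|_{\#}$. The idea is to extract ordinary convexity of $f$ from the positive semi-definite Hessian hypothesis and then use homogeneity to descend to $f^{1/d}$. Since $f$ is $C^\infty$ on $\R^m$ with a positive semi-definite Hessian everywhere, $f$ is a convex function on $\R^m$. Set $a := \|x\|_{\#}$ and $b := \|y\|_{\#}$. If $a+b = 0$ then $x = y = 0$ and the inequality is trivial; if $a = 0$ (the case $b = 0$ is symmetric) then $x = 0$ and the inequality reduces to an equality. So assume $a, b > 0$, and put $x' = x/a$, $y' = y/b$. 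By homogeneity, $f(x') = f(y') = 1$. Let $t = a/(a+b) \in (0,1)$. Convexity of $f$ yields
\begin{equation}
f\bigl(t x' + (1-t) y'\bigr) \leq t\, f(x') + (1-t)\, f(y') = 1.
\end{equation}
Since $x + y = (a+b)\bigl(t x' + (1-t) y'\bigr)$, homogeneity of $f$ gives $f(x+y) \leq (a+b)^d$, and taking $d$-th roots gives $\|x+y\|_{\#} \leq a + b$, as required.

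The only delicate point is the passage from convexity of $f$ to convexity of $g := f^{1/d}$, which I packaged above by scaling both vectors to the unit level set $\{f = 1\}$; the PSD Hessian hypothesis is used exactly once, to justify the single convexity inequality on the middle line. Everything else is bookkeeping with the homogeneity relation $f(\lambda x) = \lambda^d f(x)$ and the evenness of $d$.
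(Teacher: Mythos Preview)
Your proof is correct and follows essentially the same route as the paper: deduce that $d$ is even from homogeneity and positivity, use the positive semi-definite Hessian to get convexity of $f$, and then derive the triangle inequality by normalizing $x$ and $y$ to the level set $\{f=1\}$ and taking the appropriate convex combination. The paper phrases the last step as convexity of the unit ball $B=\{x\mid f(x)\le 1\}$, but this is exactly your inequality $f(tx'+(1-t)y')\le 1$ restated.
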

\begin{proof}
It is clear that $\|x\|_\#=0$ if and only if $x=0$. We have $f(\lambda x)=\lambda^d f(x)$ which implies that $d$ must be even.
 We get  $f(\lambda x)^{1/d}=(\lambda^d f(x))^{1/d}=|\lambda| f(x)^{1/d}$.
Because the Hessian is positive semi-definite, the function $f(x)$ is convex and the set $B=\{x\mid f(x)\leq 1\}$ is convex,
which is also the unit ball for $\|x\|_\#$.

If $x,y\in \R^n$ are nonzero, then we have
$\frac{x}{\|x\|_\#},\frac{y}{\|y\|_\#}\in B$ and therefore 
$$\frac{x+y}{\|x\|_\#+\|y\|_\#}=\frac{\|x\|_\#}{\|x\|_\#+\|y\|_\#}\cdot \frac{x}{\|x\|_\#}+\frac{\|y\|_\#}{\|x\|_\#+\|y\|_\#}\cdot \frac{y}{\|y\|_\#}\in B.
$$
So 
$$
\left\|\frac{x+y}{\|x\|_\#+\|y\|_\#}\right\|_\#=\frac{\|x+y\|_\#}{\|x\|_\#+\|y\|_\#}\leq 1.
$$
This proves the triangle inequality.
\end{proof}

\begin{proposition}\label{prop:IsNorm}
The function $\|\cdot\|_\#$ is a norm.
\end{proposition}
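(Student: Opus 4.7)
The plan is to apply the preceding lemma to $f(\mathcal{T}) := 5\|\mathcal{T}\|_\#^4$, the degree-$4$ $H$-invariant polynomial given by~\eqref{Tsharp}. Homogeneity of degree $4$ is built in, so the remaining hypotheses to check are (i) strict positivity $f(\mathcal{T}) > 0$ for $\mathcal{T} \neq 0$, and (ii) positive semi-definiteness of the Hessian of $f$.

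For (i), I would combine the three diagrammatic inequalities~\eqref{eq:ineq1}, \eqref{eq:ineq2}, and~\eqref{eq:ineq3}. Adding the first two yields $2\,\mathrm{Tet}(\mathcal{T}) \geq -\mathrm{RedFrame}(\mathcal{T}) - \mathrm{GreenFrame}(\mathcal{T})$, so that
\[
f(\mathcal{T}) \;\geq\; \mathrm{BlueFrame}(\mathcal{T}) \;=\; \tr\bigl((M^{T}M)^2\bigr) \;=\; \|M\|_{S_4}^4,
\]
where $M$ is the mode-$3$ flattening of $\mathcal{T}$ and $\|\cdot\|_{S_4}$ denotes the Schatten-$4$ norm. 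Since flattening is an injective linear map and the Schatten-$4$ norm is a genuine norm, the right-hand side is strictly positive whenever $\mathcal{T} \neq 0$.

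The Hessian-PSD condition is the main obstacle. My plan is to exploit the sum-of-squares identity $5\|\mathcal{T}\|_\#^4 = \tfrac12\|\mathcal{U}_R(\mathcal{T})\|^2 + \tfrac12\|\mathcal{U}_G(\mathcal{T})\|^2 + \|\mathcal{U}_B(\mathcal{T})\|^2$ uncovered in the derivation of~\eqref{eq:NormNonnegative}, where $\mathcal{U}_R,\mathcal{U}_G,\mathcal{U}_B$ are the three quadratic covariants built from the partial Brauer diagrams of~\eqref{eq:matrixdot} and its color-permuted analogues, and then to verify PSD Hessian for each summand. For a quadratic covariant $\mathcal{U}$ with symmetric polarization $\mathcal{U}(\cdot,\cdot)$, a Taylor expansion of $\|\mathcal{U}(\mathcal{T}+s\mathcal{A})\|^2$ in $s$ shows that PSD Hessian is equivalent to the pointwise inequality
\[
\langle \mathcal{U}(\mathcal{T}),\mathcal{U}(\mathcal{A})\rangle + 2\|\mathcal{U}(\mathcal{T},\mathcal{A})\|^2 \;\geq\; 0 \quad\text{for all }\mathcal{T},\mathcal{A}\in V.
\]
The summand for $\mathcal{U}_B(\mathcal{T}) = M(\mathcal{T})^{T}M(\mathcal{T})$ is handled at once, since $\langle \mathcal{U}_B(\mathcal{T}),\mathcal{U}_B(\mathcal{A})\rangle = \|M(\mathcal{T})M(\mathcal{A})^{T}\|_F^2 \geq 0$ and the polarized term is manifestly nonnegative.

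The $\mathcal{U}_R$ and $\mathcal{U}_G$ summands are the hard part: these covariants have the shape ``(Gram matrix) $+$ (its partial transpose on one factor)'', and the diagonal inner product $\langle \mathcal{U}(\mathcal{T}),\mathcal{U}(\mathcal{A})\rangle$ can take either sign. The expected resolution is that the positive polarized contribution $2\|\mathcal{U}(\mathcal{T},\mathcal{A})\|^2$ always dominates any negative part; proving this will be carried out by writing out the polarizations of the three partial-diagram identities~\eqref{eq:ineq1}--\eqref{eq:ineq3} at $(\mathcal{T}+s\mathcal{A})$, extracting the $s^2$-coefficients, and recognizing the result as a further diagrammatic sum of squares. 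This final step is where essentially all of the technical content sits.
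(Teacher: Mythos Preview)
Your strict positivity argument is clean and correct; bounding $f(\mathcal{T})$ below by the Schatten-$4$ norm of a flattening is essentially the same content as the paper's argument, just packaged more directly.

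The gap is in the Hessian step. Your plan is to split $5\|\mathcal{T}\|_\#^4=\tfrac12\|\mathcal U_R\|^2+\tfrac12\|\mathcal U_G\|^2+\|\mathcal U_B\|^2$ and show each summand is convex. For $\mathcal U_B$ this works, as you note. But for $\mathcal U_R$ and $\mathcal U_G$ you are asking whether $2\|\mathcal U(\mathcal T,\mathcal A)\|^2+\langle \mathcal U(\mathcal T),\mathcal U(\mathcal A)\rangle\geq 0$, and this is \emph{not} a formal consequence of $\mathcal U$ being quadratic: the function $\|Q(x)\|^2$ for a homogeneous quadratic map $Q$ need not be convex (e.g.\ $Q(x,y)=xy$ gives $x^2y^2$, whose Hessian is indefinite). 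So ``the polarized term always dominates'' is a genuine claim that requires proof, and your suggestion to ``polarize \eqref{eq:ineq1}--\eqref{eq:ineq3} and extract $s^2$-coefficients'' only re-expresses the Hessian as $2\|\mathcal U(\mathcal T,\mathcal A)\|^2+\langle\mathcal U(\mathcal T),\mathcal U(\mathcal A)\rangle$; it does not by itself produce a sum of squares for the second term.

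The paper does not attempt the term-by-term route. Instead it writes the full Hessian $h_2(\mathcal T,\mathcal E)$ out in diagrams and exhibits it directly as a sum of squares that \emph{mixes colors}: for each color $c$ one takes the polarized covariant $\mathcal W_c=2\,\mathcal U_c(\mathcal T,\mathcal E)$ and the single-diagram bilinear covariant underlying \eqref{eq:matrixdot}, and checks that $\sum_{c}\bigl(\tfrac14\|\mathcal W_c\|^2+\|\text{(single diagram)}_c\|^2\bigr)$ reproduces all of $\tfrac12 h_2$. The point is that the indefinite cross terms $\langle\mathcal U_c(\mathcal T),\mathcal U_c(\mathcal E)\rangle$ that block your term-by-term argument are absorbed by squares coming from the \emph{other} colors, so the decomposition only closes at the level of the full sum. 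If you want to salvage your outline, the missing idea is precisely this: abandon the attempt to make each $\|\mathcal U_i\|^2$ individually convex and instead match the twelve mixed diagrams of $h_2$ against six explicit squares bilinear in $(\mathcal T,\mathcal E)$.
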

\begin{proof}
From (\ref{eq:NormNonnegative}) it follows that $\|\cdot\|_\#$ is nonnegative. If $\|{\mathcal T}\|_\#=0$ for some tensor, then we have equality in (\ref{eq:NormNonnegative}), (\ref{eq:ineq3})
and (\ref{eq:matrixdot}). This implies that
$$
 \vcenter{\xymatrix@=2.9ex{
*{\bullet}\ar@<.2ex>@[green]@{-}[r] \ar@[red]@{-}[d] \ar@<-.2ex>@[blue]@{-}[r]  &   \\
*{\bullet}\ar@<-.2ex>@[green]@{-}[r]  &   \ar@<-.2ex>@[blue]@{-}[l] }}=0. 
$$
If $A$ is a $p\times qr$ flattening of ${\mathcal T}$, then we have $A^t A=0$ where $A^t$ is the transpose of $A$.
It follows that $A=0$ and ${\mathcal T}=0$. 
To show that $\|\cdot\|_\#$ satisfies the triangle inequality, we have to show that the Hessian of $h=\|\cdot\|_\#^4$ is nonnegative.

Up to a constant, $h$ is equal to  (\ref{eq:NormNonnegative}).
We can write
$$
h(\mathcal {T+E})=h_0(\mathcal {T,E})+h_1(\mathcal {T,E})+h_2(\mathcal{T,E})+h_3(\mathcal {T,E})+h_4(\mathcal {T,E})
$$
where $h_i(\mathcal {T,E})$ is a polynomial function of degree $4-i$ in $\mathcal{T}$ and degree $i$ in $E$.
Here $h_0(\mathcal{T,E})=\|\mathcal{T}\|_\#^4$ and  $h_4(\mathcal{T,E})=\|\mathcal{E}\|_\#^4$. The function $h_1(\mathcal{T,E})$ 
is linear in $\mathcal{E}$ and this linear function is the gradient at ${\mathcal T}$. 
The function $h_2(\mathcal{T,E})$ is quadratic function in $\mathcal{E}$ and is, up to a constant, the Hessian
of $h$ at $\mathcal{T}$. So we have to show that $h_2(\mathcal{T,E})\geq 0$ for all tensors $\mathcal{T}$ and $\mathcal{E}$.

Let us write a black vertex for the tensor $\mathcal{T}$ and a white vertex for the tensor $\mathcal{E}$.
We get the Hessian of a function in $\mathcal{T}$ by summing all the possible ways of replacing two
black vertices by two white vertices. The Hessian of the left-hand side of (\ref{eq:NormNonnegative}) is
\begin{equation}\label{eq:h2}
{\textstyle \frac{1}{2}}h_2(\mathcal{T,E})=
\begin{array}{cccccccc}
 \vcenter{\xymatrix@=2.9ex{
*{\circ}\ar@<.2ex>@[red]@{-}[r] \ar@[blue]@{-}[d] \ar@<-.2ex>@[green]@{-}[r]  & *{\circ}\ar@[blue]@{-}[d]  \\
*{\bullet}\ar@<-.2ex>@[red]@{-}[r]  &  *{\bullet} \ar@<-.2ex>@[green]@{-}[l] }}
&+&
 \vcenter{\xymatrix@=2.9ex{
*{\circ}\ar@<.2ex>@[red]@{-}[r] \ar@[green]@{-}[d] \ar@<-.2ex>@[blue]@{-}[r]  & *{\circ}\ar@[green]@{-}[d]  \\
*{\bullet}\ar@<-.2ex>@[red]@{-}[r]  &  *{\bullet} \ar@<-.2ex>@[blue]@{-}[l] }} 
&+&
 \vcenter{\xymatrix@=2.9ex{
*{\circ}\ar@<.2ex>@[green]@{-}[r] \ar@[red]@{-}[d] \ar@<-.2ex>@[blue]@{-}[r]  & *{\circ}\ar@[red]@{-}[d]  \\
*{\bullet}\ar@<-.2ex>@[green]@{-}[r]  &  *{\bullet} \ar@<-.2ex>@[blue]@{-}[l] }}
&+&
2 \vcenter{\xymatrix@=2.9ex{
*{\circ}\ar@[green]@{-}[r] \ar@[red]@{-}[d] \ar@[blue]@{-}[rd]  &  *{\circ}  \ar@[blue]@{-}[ld] \ar@[red]@{-}[d] \\
*{\bullet}\ar@[green]@{-}[r]  &  *{\bullet} }}
&+ \\ \\
 \vcenter{\xymatrix@=2.9ex{
*{\circ}\ar@<.2ex>@[red]@{-}[r] \ar@[blue]@{-}[d] \ar@<-.2ex>@[green]@{-}[r]  & *{\bullet}\ar@[blue]@{-}[d]  \\
*{\circ}\ar@<-.2ex>@[red]@{-}[r]  &  *{\bullet} \ar@<-.2ex>@[green]@{-}[l] }}
&+&
 \vcenter{\xymatrix@=2.9ex{
*{\circ}\ar@<.2ex>@[red]@{-}[r] \ar@[green]@{-}[d] \ar@<-.2ex>@[blue]@{-}[r]  & *{\bullet}\ar@[green]@{-}[d]  \\
*{\circ}\ar@<-.2ex>@[red]@{-}[r]  &  *{\bullet} \ar@<-.2ex>@[blue]@{-}[l] }} 
&+&
 \vcenter{\xymatrix@=2.9ex{
*{\circ}\ar@<.2ex>@[green]@{-}[r] \ar@[red]@{-}[d] \ar@<-.2ex>@[blue]@{-}[r]  & *{\bullet}\ar@[red]@{-}[d]  \\
*{\circ}\ar@<-.2ex>@[green]@{-}[r]  &  *{\bullet} \ar@<-.2ex>@[blue]@{-}[l] }}
&+&
2 \vcenter{\xymatrix@=2.9ex{
*{\circ}\ar@[green]@{-}[r] \ar@[red]@{-}[d] \ar@[blue]@{-}[rd]  &  *{\bullet}  \ar@[blue]@{-}[ld] \ar@[red]@{-}[d] \\
*{\circ}\ar@[green]@{-}[r]  &  *{\bullet} }}
&+ \\ \\
 \vcenter{\xymatrix@=2.9ex{
*{\circ}\ar@<.2ex>@[red]@{-}[r] \ar@[blue]@{-}[d] \ar@<-.2ex>@[green]@{-}[r]  & *{\bullet}\ar@[blue]@{-}[d]  \\
*{\bullet}\ar@<-.2ex>@[red]@{-}[r]  &  *{\circ} \ar@<-.2ex>@[green]@{-}[l] }}
&+&
 \vcenter{\xymatrix@=2.9ex{
*{\circ}\ar@<.2ex>@[red]@{-}[r] \ar@[green]@{-}[d] \ar@<-.2ex>@[blue]@{-}[r]  & *{\bullet}\ar@[green]@{-}[d]  \\
*{\bullet}\ar@<-.2ex>@[red]@{-}[r]  &  *{\circ} \ar@<-.2ex>@[blue]@{-}[l] }} 
&+&
 \vcenter{\xymatrix@=2.9ex{
*{\circ}\ar@<.2ex>@[green]@{-}[r] \ar@[red]@{-}[d] \ar@<-.2ex>@[blue]@{-}[r]  & *{\bullet}\ar@[red]@{-}[d]  \\
*{\bullet}\ar@<-.2ex>@[green]@{-}[r]  &  *{\circ} \ar@<-.2ex>@[blue]@{-}[l] }}
&+&
2 \vcenter{\xymatrix@=2.9ex{
*{\circ}\ar@[green]@{-}[r] \ar@[red]@{-}[d] \ar@[blue]@{-}[rd]  &  *{\bullet}  \ar@[blue]@{-}[ld] \ar@[red]@{-}[d] \\
*{\bullet}\ar@[green]@{-}[r]  &  *{\circ} }}.
\end{array}
\end{equation}
Let ${\mathcal W}:V\otimes V\to G^{\otimes 2}\otimes B^{\otimes 2}$ be defined by
$$
{\mathcal W}(\mathcal{T,E})=
 \vcenter{\xymatrix@=2.9ex{
*{\bullet}\ar@[green]@{-}[r] \ar@[red]@{-}[d] \ar@[blue]@{-}[rd]  &  \ar@[blue]@{-}[ld]  \\
*{\circ}\ar@[green]@{-}[r]  &  }} + 
\vcenter{\xymatrix@=2.9ex{
*{\circ}\ar@[green]@{-}[r] \ar@[red]@{-}[d] \ar@[blue]@{-}[rd]  &  \ar@[blue]@{-}[ld]  \\
*{\bullet}\ar@[green]@{-}[r]  &  }} + 
 \vcenter{\xymatrix@=2.9ex{
*{\bullet}\ar@<.2ex>@[green]@{-}[r] \ar@[red]@{-}[d] \ar@<-.2ex>@[blue]@{-}[r]  &   \\
*{\circ}\ar@<-.2ex>@[green]@{-}[r]  &   \ar@<-.2ex>@[blue]@{-}[l] }} +
 \vcenter{\xymatrix@=2.9ex{
*{\circ}\ar@<.2ex>@[green]@{-}[r] \ar@[red]@{-}[d] \ar@<-.2ex>@[blue]@{-}[r]  &   \\
*{\bullet}\ar@<-.2ex>@[green]@{-}[r]  &   \ar@<-.2ex>@[blue]@{-}[l] }}. 
$$
We compute
\begin{equation}\label{eq:WW}
0\leq {\textstyle\frac{1}{4}}(
{\mathcal W}\cdot {\mathcal W})=
  \vcenter{\xymatrix@=2.9ex{
*{\circ}\ar@[green]@{-}[r] \ar@[red]@{-}[d] \ar@[blue]@{-}[rd]  & *{\circ} \ar@[blue]@{-}[ld]  \ar@[red]@{-}[d] \\
*{\bullet}\ar@[green]@{-}[r]  & *{\bullet} }} +
  \vcenter{\xymatrix@=2.9ex{
*{\circ}\ar@[green]@{-}[r] \ar@[red]@{-}[d] \ar@[blue]@{-}[rd]  & *{\bullet} \ar@[blue]@{-}[ld]  \ar@[red]@{-}[d] \\
*{\bullet}\ar@[green]@{-}[r]  & *{\circ} }}+
 \vcenter{\xymatrix@=2.9ex{
*{\circ}\ar@<.2ex>@[green]@{-}[r] \ar@[red]@{-}[d] \ar@<-.2ex>@[blue]@{-}[r]  & *{\circ}\ar@[red]@{-}[d]  \\
*{\bullet}\ar@<-.2ex>@[green]@{-}[r]  & *{\bullet}  \ar@<-.2ex>@[blue]@{-}[l] }} +
 \vcenter{\xymatrix@=2.9ex{
*{\circ}\ar@<.2ex>@[green]@{-}[r] \ar@[red]@{-}[d] \ar@<-.2ex>@[blue]@{-}[r]  & *{\bullet}\ar@[red]@{-}[d]  \\
*{\bullet}\ar@<-.2ex>@[green]@{-}[r]  & *{\circ}  \ar@<-.2ex>@[blue]@{-}[l] }}.
\end{equation}
and we have
\begin{equation}\label{eq:nonneg}
0\leq 
 \vcenter{\xymatrix@=2.9ex{
*{\circ}\ar@[green]@{-}[r] \ar@<.2ex>@[red]@{-}[d] \ar@<-.2ex>@[blue]@{-}[d]  &   \\
*{\bullet}\ar@[green]@{-}[r]  &  }}\cdot
 \vcenter{\xymatrix@=2.9ex{
*{\circ}\ar@[green]@{-}[r] \ar@<.2ex>@[red]@{-}[d] \ar@<-.2ex>@[blue]@{-}[d]  &   \\
*{\bullet}\ar@[green]@{-}[r]  &  }}=
 \vcenter{\xymatrix@=2.9ex{
*{\circ}\ar@<.2ex>@[red]@{-}[r] \ar@[green]@{-}[d] \ar@<-.2ex>@[blue]@{-}[r]  & *{\bullet}\ar@[green]@{-}[d]  \\
*{\circ}\ar@<-.2ex>@[red]@{-}[r]  &  *{\bullet} \ar@<-.2ex>@[blue]@{-}[l] }}.  
\end{equation}
Adding  (\ref{eq:WW}) and (\ref{eq:nonneg}) and all expressions obtained by cyclically permuting the colors red, green and blue yields (\ref{eq:h2})
This proves that $h_2(\mathcal{T,E})\geq 0$ and completes the proof that $\|\cdot\|_\#$ is a norm.
\end{proof}

\begin{definition}
A spectral-like  norm is a norm $\|\cdot\|_X$ in $\R^{p\times q\times r}$
with the following properties:
\begin{enumerate}
    \item $\|\mathcal{T}\|_X=1$ if $\mathcal{T}$ is a rank $1$ tensor with $\|\mathcal{T}\|_2=1$;
    \item $\|\mathcal{T}\|_X<1$ if ${\mathcal T}$ is a tensor of rank $>1$ with $\|\mathcal{T}\|_2=1$.
\end{enumerate}
\end{definition}
Examples of spectral-like norms are the spectral norm $\|\cdot\|_\sigma$, the norms $\|\cdot\|_{\sigma,d}$ for $d=2,4,\dots$ and $\|\cdot\|_{\#}$.
\begin{definition}
A nuclear-like norm is a norm $\|\cdot\|_Y$ in $\R^{p\times q\times r}$
with the following properties:
\begin{enumerate}
    \item $\|{\mathcal T}\|_Y=1$ if ${\mathcal T}$ is a rank $1$ tensor with $\|{\mathcal T}\|_2=1$;
    \item $\|{\mathcal T}\|_Y>1$ if ${\mathcal T}$ is a tensor of rank $>1$ with $\|{\mathcal T}\|_2=1$.
\end{enumerate}
\end{definition}
A norm  $\|\cdot\|_Y$ is the dual of another norm $\|\cdot\|_X$ if
$$
\|{\mathcal S}\|_Y=\max\{{\mathcal S}\cdot {\mathcal T}~:~ \|{\mathcal T}\|_X\ \leq 1 \}.
$$
A norm $\|\cdot\|_Y$ is the dual of $\|\cdot\|_X$ if and only if $\|\cdot\|_X$ is the dual of $\|\cdot\|_Y$.
The dual of a spectral-like norm is a nuclear-like norm.

We are particularly interested in norms that are powerful in distinguishing
low rank tensors from high rank tensors. Spectral-like norms are normalized
such that rank 1 tensors of unit Euclidean length have norm 1.
A possible measure for the rank discriminating power of a spectral-like
norm $\|\cdot\|_X$ is the expected value of $\EE(\|{\mathcal T}\|_X)$
where ${\mathcal T}\in S^{pqr-1}$ is a random unit tensor in $R\otimes G\otimes B$ (with the uniform distribution over the sphere). A smaller value of $\EE(\|{\mathcal T}\|_X)$
means more discriminating power, which is better.
In this sense, the spectral norm is the best norm, because for spectral-like norms $\|\cdot\|_X$ we have $\|{\mathcal T}\|_X\geq \|{\mathcal T}\|_\sigma$, so $\EE(\|{\mathcal T}\|_X)\geq \EE(\|{\mathcal T}\|_\sigma)$. We may not be able to compute the value $\EE(\|{\mathcal T}\|_X)$
for many norms $\|\cdot\|_X$. If we fix the size of the tensor we can
estimate $\EE(\|{\mathcal T}\|_X)$ by taking the average of random unit vectors $x$.

We will compare the norms $\|\cdot\|_{\sigma,4}$ and $\|\cdot\|_{\#}$, which both have degree $4$. Although we are not able to give closed formulas
for $\EE(\|{\mathcal T}\|_{\sigma,4})$ and $\EE(\|{\mathcal T}\|_{\#})$, we can compute
 $\EE(\|{\mathcal T}\|_{\sigma,4}^4)$ and $\EE\left(\|{\mathcal T}\|_{\#}^4\right)$.
 First we note that
 \begin{equation}\label{eq:TTTT}
 \EE({\mathcal T}\otimes {\mathcal T} \otimes {\mathcal T}\otimes \mathcal {T})=\frac{1}{pqr(pqr+2)}\left(
 \vcenter{\xymatrix@=2.9ex{
*{\bullet}\ar@<-.2ex>@[red]@{-}[d] \ar@<.2ex>@[green]@{-}[d] \ar@[blue]@{-}[d] & *{\bullet}\ar@<.2ex>@[red]@{-}[d] \ar@<-.2ex>@[green]@{-}[d]  \ar@[blue]@{-}[d] \\
*{\bullet} & *{\bullet}}}+
\vcenter{\xymatrix@=2.9ex{
*{\bullet}\ar@<-.2ex>@[red]@{-}[r] \ar@<.2ex>@[green]@{-}[r] \ar@[blue]@{-}[r] & *{\bullet}\\
*{\bullet}\ar@<.2ex>@[red]@{-}[r] \ar@<-.2ex>@[green]@{-}[r]  \ar@[blue]@{-}[r]  &
*{\bullet}}}+
\vcenter{\xymatrix@=2.9ex{
*{\bullet}\ar@<-.2ex>@[red]@{-}[rd] \ar@<.2ex>@[green]@{-}[rd] \ar@[blue]@{-}[rd] & *{\bullet}\ar@<.2ex>@[red]@{-}[ld] \ar@<-.2ex>@[green]@{-}[ld]  \ar@[blue]@{-}[ld] \\
*{\bullet} & *{\bullet}}}
\right),
\end{equation}
because we can view ${\mathcal T}$ as a random unit tensor in $V\otimes V\otimes V\otimes V$ and apply Proposition~\ref{integral}.

 To compute $\EE(\|{\mathcal T}\|_{\sigma,4}^4)$ we compute the inner product between (\ref{eq:TTTT}) and (\ref{eq:sigma4}).
 To perform this computation we overlay the two diagrams and count the number of cycles for each color.
 $$
 \RedFrame\cdot  \vcenter{\xymatrix@=2.9ex{
*{\bullet}\ar@<-.2ex>@[red]@{-}[d] \ar@<.2ex>@[green]@{-}[d] \ar@[blue]@{-}[d] & *{\bullet}\ar@<.2ex>@[red]@{-}[d] \ar@<-.2ex>@[green]@{-}[d]  \ar@[blue]@{-}[d] \\
*{\bullet} & *{\bullet}}}=
\vcenter{
\xymatrix@=2.9ex{
*{\bullet}\ar@[red]@{-}[d] \ar@[red]@{-}[r]  & *{\bullet}\ar@[red]@{-}[d]   \\
*{\bullet}\ar@[red]@{-}[r] & *{\bullet}}}
\ 
\vcenter{
\xymatrix@=2.9ex{
*{\bullet}\ar@[green]@<.2ex>@{-}[d] \ar@<-.2ex>@[green]@{-}[d]  &
*{\bullet}\ar@[green]@<.2ex>@{-}[d] \ar@<-.2ex>@[green]@{-}[d]  \\
*{\bullet} & *{\bullet}}}
\ 
\vcenter{
\xymatrix@=2.9ex{
*{\bullet}\ar@[blue]@<.2ex>@{-}[d] \ar@<-.2ex>@[blue]@{-}[d]  &
*{\bullet}\ar@[blue]@<.2ex>@{-}[d] \ar@<-.2ex>@[blue]@{-}[d]  \\
*{\bullet} & *{\bullet}}}=pq^2r^2.
$$
The result is
\begin{multline}
\EE(\|{\mathcal T}\|_{\sigma,4}^4)=\textstyle\frac{1}{9pqr(pqr+2)}((p^2q^2r^2+2pqr)+
2(pq^2r^2+p^2qr+pqr)+
2(p^2qr^2+pq^2r+pqr)+\\+2(p^2q^2r+pqr^2+pqr)
+2(p^2qr+pq^2r+pqr^2)=
\frac{pqr+2(pq+pr+qr)+4(p+q+r)+8}{9(pqr+2)}.
\end{multline}
A similar computation shows that
\begin{equation}
\EE\left(\|{\mathcal T}\|_{\#}^4\right)=\frac{(pq+pr+qr)+3(p+q+r)+3}{5(pqr+2)}.
\end{equation}

The following proposition shows that, in some sense,  $\|\cdot\|_\#$ is better
than $\|\cdot\|_{\sigma,4}$ as an approximation to the spectral norm $\|\cdot\|_\sigma$.
\begin{proposition}\label{prop:NormBetter}
If $p,q,r\geq 1$ then we have $\EE(\|{\mathcal T}\|_{\sigma}^4)\leq \EE(\|{\mathcal T}\|_{\#}^4)\leq \EE(\|{\mathcal T}\|_{\sigma,4}^4)$
for a random tensor ${\mathcal T}$ sampled from the uniform distribution on the unit sphere.
The inequality is strict when two of the numbers $p,q,r$ are at least 2.
\end{proposition}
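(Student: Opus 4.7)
The two inequalities call for different arguments, and the strictness claim piggybacks on both. The left inequality $\EE(\|\mathcal{T}\|_\sigma^4)\leq \EE(\|\mathcal{T}\|_\#^4)$ is essentially free from what is already on the page: the discussion immediately preceding the proposition lists $\|\cdot\|_\#$ as a spectral-like norm and observes that every spectral-like norm $\|\cdot\|_X$ satisfies the pointwise bound $\|\mathcal{T}\|_X\geq \|\mathcal{T}\|_\sigma$. Raising to the fourth power (both sides being nonnegative) and integrating against the uniform measure on $S^{pqr-1}$ yields the desired inequality of expectations.

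For the right inequality $\EE(\|\mathcal{T}\|_\#^4)\leq \EE(\|\mathcal{T}\|_{\sigma,4}^4)$ I would compare the two closed-form expressions displayed just above the proposition. Clearing the common positive denominator $45(pqr+2)$ reduces the inequality to
$$
f(p,q,r) := 5pqr + (pq+pr+qr) - 7(p+q+r) + 13 \geq 0.
$$
The key step is then the identity
$$
f(p,q,r) = (r-1)\bigl(5pq+p+q-7\bigr) + 6(p-1)(q-1),
$$
which I would verify by a one-line expansion; by the $S_3$-symmetry of $f$ the analogous identity holds with any variable distinguished. For $p,q,r\geq 1$ both summands are nonnegative (note $5pq+p+q-7\geq 5+1+1-7=0$), which gives the nonstrict bound.

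For the strictness assertion, assume two of $p,q,r$ are at least $2$; by symmetry take $p,q\geq 2$. Then in the identity above $6(p-1)(q-1)\geq 6$, and the first summand is nonnegative because $5pq+p+q-7\geq 17>0$ and $r-1\geq 0$, so $f(p,q,r)\geq 6>0$ and the right inequality becomes strict. For the left inequality, the same hypothesis forces $pqr>p+q+r-2$, so the Segre variety of rank-$1$ tensors is a proper subvariety of $\R^{p\times q\times r}$ and the rank-$\geq 2$ unit tensors occupy an open set of positive measure on $S^{pqr-1}$. On this set $\|\mathcal{T}\|_\#\geq \|\mathcal{T}\|_\sigma$ with both quantities strictly below $1$; since $\|\mathcal{T}\|_\#^4$ is a polynomial of degree $4$ in the entries of $\mathcal{T}$ while $\|\mathcal{T}\|_\sigma^4$ is not polynomial (the top singular value of a $p\times qr$ flattening already involves a square root), the two functions cannot agree on a nonempty open subset, so strict inequality holds on a positive-measure set and integrates to a strict inequality.

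The main obstacle I anticipate is the strictness of the left inequality. The nonstrict version is free from the spectral-like property, but promoting it requires showing that $\|\cdot\|_\#$ and $\|\cdot\|_\sigma$ do not coincide on a positive-measure subset of the sphere. The polynomial-versus-nonpolynomial argument sketched above is the cleanest route, and it can also be replaced by exhibiting a single low-rank example on which the two norms differ and invoking continuity. By contrast, both the nonstrict and strict parts of the right inequality are routine polynomial manipulation once the symmetric decomposition of $f$ has been guessed, so the entire proof hinges on spotting that decomposition.
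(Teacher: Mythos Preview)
Your approach matches the paper's: both subtract the two closed-form expectations and reduce to showing that
\[
f(p,q,r)=5pqr+(pq+pr+qr)-7(p+q+r)+13\geq 0.
\]
The paper, however, uses the fully symmetric identity
\[
f(p,q,r)=5(p-1)(q-1)(r-1)+6\bigl[(p-1)(q-1)+(p-1)(r-1)+(q-1)(r-1)\bigr],
\]
from which nonnegativity for $p,q,r\geq 1$ and strict positivity whenever two of the variables are $\geq 2$ are read off directly, with no auxiliary bound like $5pq+p+q-7\geq 0$ required. Your asymmetric splitting $(r-1)(5pq+p+q-7)+6(p-1)(q-1)$ is equally valid but obscures the $S_3$-symmetry that the paper's form makes manifest.

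On the left inequality $\EE(\|\mathcal T\|_\sigma^4)\leq \EE(\|\mathcal T\|_\#^4)$ and its strictness, you actually go further than the paper does: the paper's proof consists solely of the computation above and leaves the left inequality to the earlier remark that spectral-like norms dominate $\|\cdot\|_\sigma$ pointwise, without addressing strictness at all. Your polynomial-versus-nonpolynomial argument (or the alternative of exhibiting one explicit tensor where the two norms differ and invoking continuity) is a reasonable way to fill that gap.
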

\begin{proof}
We calculate
\begin{multline}
\EE(\|{\mathcal T}\|_{\sigma,4}^4)-\EE(\|{\mathcal T}\|_{\#}^4)=\\=\frac{pqr+2(pq+pr+qr)+4(p+q+r)+8}{9(pqr+2)}-\frac{(pq+pr+qr)+3(p+q+r)+3}{5(pqr+2)}=\\=\frac{5pqr+(pq+qr+rp)-7(p+q+r)+13}{45(pqr+2}=\\=
\frac{5(p-1)(q-1)(r-1)+6((p-1)(q-1)+(p-1)(r-1)+(q-1)(r-1))}{45(pqr+2)}
\end{multline}
\end{proof}
\begin{remark}
If $p=q=r=n$, then asymptotically, we have that $\EE(\|{\mathcal T}\|_{\sigma,4}^4)=O(1)$
and $\EE(\|{\mathcal T}\|_{\#}^4)=O(\frac{1}{n}).$
\end{remark}

\section{Low rank amplification}\label{sec5}
As motivation, we will first consider a map from matrices to matrices
that enhances the low rank structure.
\subsection{Matrix amplification}
Suppose $A$ is a matrix with singular values $\lambda_1\geq \lambda_2\geq \cdots\geq \lambda_r\geq 0$.  Then we can write $A=U\Sigma V^{*}$
where $U,V$ are orthogonal, $\Sigma$ is a diagonal matrix
with diagonal entries $\lambda_1,\dots,\lambda_r,$ and $V^{*}$ is the conjugate transpose of $V.$ We have
\begin{equation}
AA^{*} A=(U\Sigma V^{*})( V \Sigma^{*} U^{*})( U\Sigma V^{*})=U\Sigma^3 V^{*}.
\end{equation}
The matrix $AA^{*} A$ has singular values $\lambda_1^3\geq \lambda_2^3\geq \cdots\lambda_r^3\geq 0$.
If $\lambda_1>\lambda_2$ then the ratio of the two largest singular values increases from $\lambda_1/\lambda_2$
to $\lambda_1^3/\lambda_2^3$. If we define a map $\theta$ by \begin{equation}
\theta(A)=\frac{AA^{*} A}{\|AA^{*} A\|},
\end{equation}
where $\|\cdot\|$ is the Euclidean (Frobenius) norm, then $\lim_{n\to\infty}\theta^n(A)$ will converge to a rank $1$ matrix $B=UDV^{*}$
where 
\begin{equation}
 D=   \begin{pmatrix}
    1 & 0 & \cdots\\
    0 & 0 & \\
    \vdots & & \ddots
    \end{pmatrix}.
\end{equation}
Note that the convergence is very fast. After $n$ iterations of $\theta$,
the ratio of the two largest singular values is $\big(\frac{\lambda_1}{\lambda_2}\big)^{3^n}$.
We have that $A\cdot B=\Sigma\cdot D=\lambda_1$ is the spectral norm
of $A$ and $\lambda_1 B$ is the best rank 1 approximation of $A$ in the following sense: if $C$ is a rank 1 matrix such that $\|A-C\|$ is minimal, then $C=\lambda_1 B$.

The map $\theta$ increases the highest singular value relative to the other singular values. In this sense,
$\theta$ amplifies the sparse structure of the matrix (meaning low rank in this context).

The map $\theta$ is related to the $4$-Schatten norm, defined by
$\|A\|_{s,4}=\tr((AA^{*})^2)^{\frac{1}{4}}$. Namely, the gradient of the function
$\|A\|_{s,4}^4$ is $4AA^{*} A$ and the gradient of the function $\|A\|_{s,4}$ is equal to $AA^{*} A$ up to a scalar function.
\subsection{Tensor amplification}\label{tensor-amplification1}
We will now consider amplification of the low rank structure of tensors. 
For this we take the gradient of a spectral-like norm. Let $h=\|\cdot\|_{\#}^4$. As before, we can write
$h(\mathcal {T+E})=h_0(\mathcal {T,E})+h_1(\mathcal {T,E})+h_2(\mathcal {T,E})+h_3(\mathcal {T,E})+h_4(\mathcal {T,E})$, where $h_i$ has degree $i$ in $\mathcal{E}$ and degree $4-i$ in $T$. Now $h_0(\mathcal {T,E})=\|{\mathcal T}\|_\#^4$, the function
${\mathcal E}\mapsto h_1(\mathcal {T,E})$ is the gradient of $h$ at ${\mathcal T}$, and $h_2(\mathcal {T,E})$ is the Hessian that we have already computed.
To find a formula for the gradient $h_1(\mathcal {T,E})$ we express $h({\mathcal T})$ in diagrams
and replace each diagram by all diagrams obtained by replacing one of the closed vertices by an open vertex. Using (\ref{Tsharp}) we get
\begin{equation}
h({\mathcal T})=\|{\mathcal T}\|_\#^4=\frac{1}{5}\left(\vcenter{\xymatrix@=2.9ex{
*{\bullet}\ar@<.2ex>@[red]@{-}[r] \ar@[blue]@{-}[d] \ar@<-.2ex>@[green]@{-}[r]  & *{\bullet}\ar@[blue]@{-}[d]  \\
*{\bullet}\ar@<-.2ex>@[red]@{-}[r]  &  *{\bullet} \ar@<-.2ex>@[green]@{-}[l] }}+
 \vcenter{\xymatrix@=2.9ex{
*{\bullet}\ar@<.2ex>@[red]@{-}[r] \ar@[green]@{-}[d] \ar@<-.2ex>@[blue]@{-}[r]  & *{\bullet}\ar@[green]@{-}[d]  \\
*{\bullet}\ar@<-.2ex>@[red]@{-}[r]  &  *{\bullet} \ar@<-.2ex>@[blue]@{-}[l] }} +
 \vcenter{\xymatrix@=2.9ex{
*{\bullet}\ar@<.2ex>@[green]@{-}[r] \ar@[red]@{-}[d] \ar@<-.2ex>@[blue]@{-}[r]  & *{\bullet}\ar@[red]@{-}[d]  \\
*{\bullet}\ar@<-.2ex>@[green]@{-}[r]  &  *{\bullet} \ar@<-.2ex>@[blue]@{-}[l] }} +
2 \vcenter{\xymatrix@=2.9ex{
*{\bullet}\ar@[green]@{-}[r] \ar@[red]@{-}[d] \ar@[blue]@{-}[rd]  &  *{\bullet}  \ar@[blue]@{-}[ld] \ar@[red]@{-}[d] \\
*{\bullet}\ar@[green]@{-}[r]  &  *{\bullet} }}\right).
\end{equation}
The gradient is now equal to
\begin{equation}\label{eq:gradient}
(\nabla h)({\mathcal T})=\frac{4}{5}\left(\vcenter{\xymatrix@=2.9ex{
*{\bullet}\ar@<.2ex>@[red]@{-}[r] \ar@[blue]@{-}[d] \ar@<-.2ex>@[green]@{-}[r]  & *{\bullet}\ar@[blue]@{-}[d]  \\
*{\bullet}\ar@<-.2ex>@[red]@{-}[r]  &  *{\circ} \ar@<-.2ex>@[green]@{-}[l] }}+
 \vcenter{\xymatrix@=2.9ex{
*{\bullet}\ar@<.2ex>@[red]@{-}[r] \ar@[green]@{-}[d] \ar@<-.2ex>@[blue]@{-}[r]  & *{\bullet}\ar@[green]@{-}[d]  \\
*{\bullet}\ar@<-.2ex>@[red]@{-}[r]  &  *{\circ} \ar@<-.2ex>@[blue]@{-}[l] }} +
 \vcenter{\xymatrix@=2.9ex{
*{\bullet}\ar@<.2ex>@[green]@{-}[r] \ar@[red]@{-}[d] \ar@<-.2ex>@[blue]@{-}[r]  & *{\bullet}\ar@[red]@{-}[d]  \\
*{\bullet}\ar@<-.2ex>@[green]@{-}[r]  &  *{\circ} \ar@<-.2ex>@[blue]@{-}[l] }} +
2 \vcenter{\xymatrix@=2.9ex{
*{\bullet}\ar@[green]@{-}[r] \ar@[red]@{-}[d] \ar@[blue]@{-}[rd]  &  *{\bullet}  \ar@[blue]@{-}[ld] \ar@[red]@{-}[d] \\
*{\bullet}\ar@[green]@{-}[r]  &  *{\circ} }}\right).
\end{equation}
We can also view these diagrams with an open vertex as partial colored Brauer diagrams by removing the open vertex. For example,
\begin{equation}
\vcenter{\xymatrix@=2.9ex{
*{\bullet}\ar@[green]@{-}[r] \ar@[red]@{-}[d] \ar@[blue]@{-}[rd]  &  *{\bullet}  \ar@[blue]@{-}[ld] \ar@[red]@{-}[d] \\
*{\bullet}\ar@[green]@{-}[r]  &  *{\circ} }}\quad =\quad \vcenter{
\xymatrix@=2ex{
& *{\bullet}\ar@[green]@{-}[l]\ar@[blue]@{-}[dd]\ar@[red]@{-}[rd] & & \\
& & *{\bullet}\ar@[blue]@{-}[r] &  \\
& *{\bullet}\ar@[red]@{-}[l] \ar@[green]@{-}[ru] & & }}.
\end{equation}
Let $\Phi_\#({\mathcal T})=(\nabla h)({\mathcal T})$. We view $\Phi_\#$
as a polynomial map from $V=R\otimes G\otimes B$ to itself. This map enhances the low rank structure of a tensor ${\mathcal T}$. 

In a similar fashion, we can associate an amplification map $\Phi_{\sigma,4}$ to the norm $\|\cdot\|_{\sigma,4}$. Using (\ref{eq:sigma4}) and similar calculations as before, we get
\begin{equation}\label{eq:gradient2}
\Phi_{\sigma,4}({\mathcal T})=\frac{4}{9}\left(
\vcenter{\xymatrix@=2.9ex{
*{\bullet}\ar@<.4ex>@[red]@{-}[r] \ar@[blue]@{-}[r] \ar@<-.4ex>@[green]@{-}[r]  & *{\bullet}  \\
*{\bullet}\ar@<-.4ex>@[red]@{-}[r]\ar@[blue]@{-}[r]   &  *{\circ} \ar@<-.4ex>@[green]@{-}[l] }}+
2\vcenter{\xymatrix@=2.9ex{
*{\bullet}\ar@<.2ex>@[red]@{-}[r] \ar@[blue]@{-}[d] \ar@<-.2ex>@[green]@{-}[r]  & *{\bullet}\ar@[blue]@{-}[d] \\
*{\bullet}\ar@<-.2ex>@[red]@{-}[r]   &  *{\circ} \ar@<-.2ex>@[green]@{-}[l] }}+
 2\vcenter{\xymatrix@=2.9ex{
*{\bullet}\ar@<.2ex>@[red]@{-}[r] \ar@[green]@{-}[d] \ar@<-.2ex>@[blue]@{-}[r]  & *{\bullet}\ar@[green]@{-}[d]  \\
*{\bullet}\ar@<-.2ex>@[red]@{-}[r]  &  *{\circ} \ar@<-.2ex>@[blue]@{-}[l] }} +
2 \vcenter{\xymatrix@=2.9ex{
*{\bullet}\ar@<.2ex>@[green]@{-}[r] \ar@[red]@{-}[d] \ar@<-.2ex>@[blue]@{-}[r]  & *{\bullet}\ar@[red]@{-}[d]  \\
*{\bullet}\ar@<-.2ex>@[green]@{-}[r]  &  *{\circ} \ar@<-.2ex>@[blue]@{-}[l] }} +
2 \vcenter{\xymatrix@=2.9ex{
*{\bullet}\ar@[green]@{-}[r] \ar@[red]@{-}[d] \ar@[blue]@{-}[rd]  &  *{\bullet}  \ar@[blue]@{-}[ld] \ar@[red]@{-}[d] \\
*{\bullet}\ar@[green]@{-}[r]  &  *{\circ} }}\right).
\end{equation}

\subsection{Tensor amplification and Alternating Least Squares}\label{ALS} As we discussed in Section  \ref{notation}, 
Alternating Least Squares (ALS) is a standard approach to find low rank approximations of tensors. For rank 1, this algorithm is particularly simple. For a tensor ${\mathcal T}\in \R^{p\times q\times r}$
we try to find a rank one tensor $a\otimes b\otimes c$ such that $\|{\mathcal T}-a\otimes b\otimes c\|$
is minimal. Here $a\in \R^p$, $b\in \R^q$ and $c\in \R^r$. Unlike for higher rank, a best rank 1 approximation always exists.
The Alternating Least Squares algorithm works as follows. We start with an initial guess $a\otimes b\otimes c$. Then we fix $b$ and $c$ and update the vector $a$ such that $\|{\mathcal T}-a\otimes b\otimes c\|$ is minimal. This is a least squares regression problem that is easy to solve. 
Next, we fix $a$ and $c$ and update $b$, and then we fix $a$ and $b$ and update $c$.
We repeat the process of updating $a,b,c$ until the desired level of convergence is achieved. Numerical experiments were performed using the software {\tt MATLAB}, along with the {\tt cp\_{}als} implementation of the ALS algorithm from the package {\tt Tensor Toolbox} (\cite{toolbox}). ALS is sensitive to the choice of the initial guess. 

The default initialization for {\tt cp\_{}als} is to use a random initial guess. We will also consider
a method that we call the {\em Quick Rank 1} method. For a matrix it is easy to find the best rank 1 approximation from the singular value decomposition. If a real matrix $M$ has a singular value decomposition $M= \sum_{i=1}^{s} \lambda_i a_i b_i^{T}$ where $a_1, \ldots, a_s, b_1, \ldots, b_s$ are unit orthogonal vectors and $\lambda_1 \geq\lambda_2 \geq \ldots \lambda_s \geq 0$ are real numbers, then a best rank 1 approximation of $M$ is $\lambda_1 a_1 b_1 ^{T}$ and $M \cdot a_1 b_1^{T}=\lambda_1$ is the spectral norm of $M$. (The best rank 1 approximation is unique when $\lambda_1>\lambda_2$.)

For the {\em Quick Rank 1} method, we use the matrix case to find an initial rank 1 approximation of a given tensor ${\mathcal T}$
of size $p\times q\times r$. We  flatten (unfold) this tensor to a $p \times qr$ matrix ${\mathcal T}_{(1)}$. Then we find a rank 1 approximation of this matrix as described above. Let ${\mathcal T}_{(1)} \approx \beta a d^{T}~\text{where}~a,d~\text{are orthogonal vectors}$ and $\beta \in \mathbb{R}.$ We convert the vector $d$ of dimension $qr$ to a $q \times r$ matrix $D$. Now we find the best rank 1 approximation of $D \approx \gamma b c^{T}$ such that $b,c$ are orthogonal vectors and $\gamma \in \mathbb{R}.$ We will use $(\beta\gamma) a\otimes b\otimes c$ as a rank 1 approximation
to the tensor~${\mathcal T}$.

Tensor amplification can be used to obtain better initial guesses for ALS, so that better rank 1 approximations can be found using fewer iterations in the ALS algorithm.
We will consider 4 different ways of choosing an initial guess for ALS:
\begin{enumerate}    \item {\bf Random.} We choose a random initial guess for the rank 1 approximation. 
    \item {\bf Quick Rank 1.} We first use the quick rank 1 method described above.
    \item {\bf $\Phi_{\sigma,4}$ and Quick Rank 1.} We apply the Quick Rank 1 method to $\Phi_{\sigma,4}({\mathcal T})$.
    \item {\bf $\Phi_{\#}$ and Quick Rank 1.} We apply the Quick Rank 1 method to $\Phi_{\#}({\mathcal T})$.
\end{enumerate}

Rank 1 approximation methods given above can be generalized to higher ranks. Low rank tensor approximation problem is given in \eqref{decomp} and \eqref{rankr}. Let $\mathcal{T} \in \mathbb{R}^{p_1 \times p_2 \times  p_3}$ be a tensor of order 3.  We will look for a rank $r \geq 2$ approximation $\mathcal{S}$ such that 
\begin{equation}
\|\mathcal{T-S}\|~\text{is minimal with}~\mathcal{S}=\llbracket \Lambda~;~U^{(1)}, U^{(2)}, U^{(3)}\rrbracket
\end{equation}
where the factor matrices~$U^{(i)} \in \mathbb{R}^{p_i \times r}~\text{for}~1\leq i \leq 3$ and $\Lambda \in \mathbb{R}^{r}.$ 

ALS method starts with a random initial guess for the factor matrices. We first fix  $U^{(2)}$ and $U^{(3)}$ to solve for $U^{(1)},$ then fix $U^{(1)}$ and $U^{(3)}$  to solve for $U^{(2)},$ and then fix $U^{(1)}$ and $U^{(2)}$ to solve for $U^{(3)}.$ This iterative process continues until some convergence criterion is satisfied.

For the \textit{iterative Quick Rank 1 method}, we first employ the Quick Rank 1 method to approximate $\mathcal{T}$ with a rank 1 tensor $\lambda_1 a_1 \otimes b_1 \otimes c_1.$  The process continues iteratively; and at each step Quick Rank 1 method is used to find a rank 1 approximation of $\mathcal{T} - \sum_{i=1}^{s} \lambda_i a_i \otimes b_i \otimes c_i$ for $2 \leq s \leq r-1.$ 

As in the rank 1 case, we use 4 different methods to choose an initial guess for the ALS method for the low rank $r$ decomposition of $\mathcal{T}:$
\begin{enumerate}    \item {\bf Random.} We choose a random initial guess for the factor matrices.
    \item {\bf Quick Rank 1.} We use an iterative approach based on Quick Rank 1 method  as described above. (Algorithm 5.1, $k=0.$)
    \item {\bf $\Phi_{\sigma,4}$ and Quick Rank 1.} We iteratively apply the Quick Rank 1 method to $\Phi_{\sigma,4}({\mathcal T}).$ (Algorithm 5.1,~$k=1.$)
    \item {\bf $\Phi_{\#}$ and Quick Rank 1.} We iteratively apply the Quick Rank 1 method to $\Phi_{\#}({\mathcal T}).$ (Algorithm 5.1,~$k=2.$)
\end{enumerate}

\begin{algorithm}[h!]
\begin{algorithmic}[1]
\Function{${\rm \textcolor{blue}{rank\_r\_methods}}$}{$\mathcal{T},r,k$}
\State $\mathcal{D}=\mathcal{T}$
\State $s=0$
\While{$ s < r$}
 \State $s\leftarrow s+1$
 \If {$k=0$}
  \State $\mathcal{U} \leftarrow \mathcal{D}$
 \ElsIf {$k=1$}
  \State $\mathcal{U} \leftarrow \Phi_{\sigma,4}(\mathcal{D})$
 \Else
    \State $\mathcal{U} \leftarrow \Phi_{\#}(\mathcal{D})$
 \EndIf
\State Approximate $\mathcal{U}$ with a unit rank 1 tensor via Quick rank 1 method:  $\mathcal{U} \approx \lambda_s v_s = \lambda_s a_s \otimes b_s\otimes c_s$
\State Update the coefficients $\lambda_1,\dots,\lambda_s$ such that $\|\mathcal{D}\|$ is minimal, where $\mathcal{D}=\mathcal{T} -
\sum_{i=1}^{s} \lambda_i v_i$
\EndWhile
\State {\bf return} decomposition $\mathcal{T} = \mathcal{S} + \mathcal{D}$ where $\mathcal{S}=\sum_{i=1}^r \lambda_i v_i$ \\
\EndFunction
\end{algorithmic}
\caption{Low Rank approximation to tensor $\mathcal{T}$ of order 3}\label{alg1}
\end{algorithm}

\section{Experiments}\label{experiment}
\subsection{Rank 1 approximation}\label{rank1test}
In our experiments, we started with a random $30 \times 30 \times 30$ unit tensor of rank 1,  $ {\mathcal T}=a\otimes b\otimes c,$ where $a,b,c  \in \mathbb{R}^{30}$ are random unit vectors, independently drawn from the uniform distribution on the unit sphere. We then added  a random tensor ${\mathcal E}$ of size $30 \times 30 \times 30$ with $\|{\mathcal E}\|=10$ to obtain
a noisy tensor ${\mathcal T}_n=\mathcal{T+E}$. The noise tensor ${\mathcal E}$ is
chosen from the sphere in $\R^{30\times 30\times 30}\cong \R^{27000}$ of radius 10 with uniform distribution. Note that there is more noise than the original signal. The signal to noise ratio is $20\log_{10}(1/10)=-20$\;dB. We used four methods for rank 1 approximation. Each method gives a rank 1 tensor $\lambda a'\otimes b'\otimes c'$. To measure how good the rank 1 approximation is to the original tensor ${\mathcal T}$, we compute the inner product 
$${\mathcal T}\cdot (a'\otimes b'\otimes c')=
(a\otimes b\otimes c)\cdot (a'\otimes b'\otimes c')=(a\cdot a')(b\cdot b')(c\cdot c'),
$$
which we will call the fit. The fit is a number between $0$ and $1$ where $1$ means a perfect fit.

We created 1000 noisy tensors of size $30 \times 30 \times 30$ as described above. We
ran each of the 4 methods to find the best rank 1 approximation for each of the 1000 tensors. 
For the random initial guess method, we repeated the calculation 10 times
with different random initial guesses and recorded the best fit, total number of ALS iterations,
and total running time. All other methods were only run once and the fit, total number of ALS iterations, and running time were calculated. For all records, we took the average and standard deviation. 

There is a tolerance parameter $\varepsilon$ in the ALS implementation in {\tt Tensor Toolbox}.
The algorithm terminates if the fit after an iteration increases by a factor smaller than $1+\varepsilon$. For the default value $\varepsilon=10^{-4}$ we obtained the following results:

\begin{table}[h!]
\label{neg4}
\centering
\begin{tabular}{|c|c|c|c|}
\hline
   \textbf{Random (10 runs)}  & Max Fit  & Total \# Iterations & Total Time\\
 
    Average &  0.7136 & 77.5080 & 0.0943 \\
    
   Standard Deviation  &  0.2715& 12.0254 & 0.0159\\
   \hline
\hline
    \textbf{Quick Rank 1} & Fit & \# Iterations & Time\\
   
    Average &  0.7848 & 2.94 & 0.0177 \\
    
   Standard Deviation  &  0.1618 & 1.2345 & 0.0025\\
   \hline

  \textbf{$\Phi_{\sigma,4}$ and Quick Rank 1}  & Fit  & \# Iterations & Time\\
  
    Average &  0.8010 & 2 & 0.0210\\
     
   Standard Deviation  &  0.1256 & 0 & 0.0027\\
   \hline

   \textbf{$\Phi_{\#}$ and Quick Rank 1} & Fit  & \# Iterations & Time\\
   
    Average &  0.8178 & 2 & 0.0205 \\
    
   Standard Deviation  &  0.0515 & 0 & 0.0025\\
   \hline

\end{tabular}
\caption{A comparison of rank-1 approximation methods with tolerance parameter $\varepsilon=10^{-4}$}
\end{table}
It can be observed from Table \ref{neg4} that a better fit is obtained by using tensor amplification rather than a random initial guess. Even if we take the best case of repeating ALS for 10 different random initial conditions, quick rank with amplification still yields a better fit. The total number of ALS iterations with random initial guess is much larger than for the quick rank 1 initialization, or quick rank 1 with tensor amplification. On average, the number of iterations for the best run with random initialization is 10.44, which is much larger than the number of iterations after tensor amplification, which is 2. The running time 
is also favorable for the quick rank 1 initialization.
Amplification gives a better fit for the rank 1 approximation, while the running time has only marginally increased.

If we change the tolerance to $\varepsilon=10^{-6}$ then the number of iterations increases and the results are given in Table \ref{neg6}. As shown in the table, the amplification $\Phi_\#$ performs better than the amplification $\Phi_{\sigma,4}$. This is expected, as the norm $\|\cdot\|_\#$ is a better approximation for the spectral norm than $\Phi_{\sigma,4}$. We see that the amplification $\Phi_{\#}$ combined with the quick rank 1 method still yields a better fit than the best-out-of-10 runs
with random initialization. The number of iterations for the random initialization approximation with the best fit is 25.78 on average, while the average number of ALS iterations for $\Phi_{\#}$ combined with quick rank 1 is only $3.54$.

\begin{table}[h!]
\label{neg6}
\centering
\begin{tabular}{|c|c|c|c|}
\hline
   \textbf{Random (10 runs)}  & Max Fit  & Total \# Iterations & Total Time\\

    Average &  0.8120& 290.3230 & 0.2893 \\
     
   Standard Deviation  &  0.0914 & 82.7586 & 0.0803\\
   \hline
\hline
   \textbf{Quick rank 1}  & Fit  & \# Iterations & Time\\
 
    Average &  0.7955 & 6.9780&0.0210 \\
     
   Standard Deviation  &  0.1436 & 4.6320 & 0.0048\\
   \hline

  \textbf{$\Phi_{\sigma,4}$ and Quick Rank 1}  & Fit  & \# Iterations & Time\\
   
    Average &  0.8091 & 2.18 & 0.0238\\
     
   Standard Deviation  &  0.0999 & 1.2603 & 0.0046\\
   \hline

   \textbf{$\Phi_{\#}$ and Quick Rank 1} & Fit  & \# Iterations & Time\\
   
    Average &  0.8180 & 3.54 & 0.0234\\
   
   Standard Deviation  &  0.0511 &0.69 & 0.0029\\
   \hline

\end{tabular}
\caption{A comparison of rank-1 approximation methods with tolerance parameter $\varepsilon=10^{-6}$}
\end{table}

\subsection{Rank 2 approximation}\label{rank2test}

We started with a random $40 \times 40 \times 40$ unit tensor of rank 2,  $ {\mathcal T}=a_1\otimes b_1\otimes c_1 + a_2 \otimes b_2 \otimes c_2$, where $a_1,b_1,c_1,a_2,b_2,c_2  \in \mathbb{R}^{40}$ are random unit vectors, independently drawn from the uniform distribution on the unit sphere. We then added  a random tensor ${\mathcal E}$ of size $40 \times 40 \times 40$ with $\|{\mathcal E}\|=10$ to obtain
a noisy tensor ${\mathcal T}_n=\mathcal{T+E}$. The noise tensor ${\mathcal E}$ is
chosen from the sphere in $\R^{40\times 40\times 40}\cong \R^{64000}$ of radius 10 with uniform distribution. Each method gives a rank 2 tensor ${\mathcal S}$ of size $40\times 40\times 40$ and the  fit of the approximation is given by  $({\mathcal T}\cdot \mathcal{S})/ \| \mathcal{S} \|.$ As in Section \ref{rank1test}, we created 1000 noisy tensors of size $40 \times 40 \times 40$ and we ran each of the 4 methods to find a best rank 2 approximation for each tensor. Random initial guess method is repeated 10 times for each tensor and the best fits, total number of iterations and total running times were recorded. The other three methods were run only once and the fit, total number of ALS iterations, and running time were recorded for each tensor. For the tolerance parameter $\varepsilon=10^{-4},$  the average and the standard deviation of all the records are given in Table \ref{rank2}.

\begin{table}[h!] \label{rank2}
\centering
\begin{tabular}{|c|c|c|c|}
\hline
   \textbf{Random (10 runs)}  & Max Fit  & Total \# Iterations & Total Time\\
 
    Average & 0.6665 & 92.2550 & 0.1195 \\
    
   Standard Deviation  &  0.2411& 11.4910 & 0.0138\\
   \hline
\hline
    \textbf{Quick Rank 1} & Fit & \# Iterations & Time\\
   
    Average &  0.6788 & 2.1760 & 0.0925 \\
    
   Standard Deviation  &  0.1700 & 0.8425 & 0.0114\\
   \hline

  \textbf{$\Phi_{\sigma,4}$ and Quick Rank 1}  & Fit  & \# Iterations & Time\\
  
    Average &  0.7040 & 2.0790 & 0.0989\\
     
   Standard Deviation  &  0.1579 & 0.5244 & 0.0115\\
   \hline

   \textbf{$\Phi_{\#}$ and Quick Rank 1} & Fit  & \# Iterations & Time\\
   
    Average &  0.7607 & 2.0450 & 0.0989 \\
    
   Standard Deviation  &  0.1079 & 0 .3809& 0.0117\\
   \hline
\end{tabular}
\caption{A comparison of rank 2 approximation methods with tolerance parameter $\varepsilon=10^{-4}$}
\end{table}

\section{Conclusion}
Colored Brauer diagrams are a graphical way to represent invariant features in tensor data and can be used to visualize calculations with higher order tensors, and analyse the computational complexity of related algorithms. 
We have used such graphical calculations to find approximations of the spectral norm and to define polynomial maps that amplify the low rank structure of tensors. Such amplification maps are useful for finding better low rank approximations of tensors and are worthy of further study. We are interested in studying $n$-edge-colored large Brauer diagrams when $n > 3$ and generalizing the given methods for the tensors of order greater than 3.     
The complexity of computing invariant features corresponding to large diagrams can be high, depending on the particular diagram. In future research, we will investigate how one can improve such computations by using low rank tensor approximations for intermediate results within the calculations. 
\section{Acknowledgements}
This work was partially supported by the National Science
Foundation under Grant No. 1837985 and by the Department of Defense under Grant No.\\BA150235. Neriman Tokcan was partially supported by University of Michigan Precision Health Scholars Grant No. U063159.

\end{document}